\newtheorem{theorem}{Theorem}
\newtheorem{lemma}[theorem]{Lemma}
\newtheorem{blank}[theorem]{}
\newtheorem{example}[theorem]{Example}
\newcommand{\al}{{\alpha}}
\title{Limits of Tangents of Surfaces}
\author{Jo\~ao Cabral}
\address{
 Centro de Matem\'atica e Aplica\c{c}\~{o}es (CMA), FCT, UNL and Departamento de Matem\'atica, FCT, UNL, 2829-516 Caparica, Portugal}
\email{jpbc@fct.unl.pt} 
\author{Orlando Neto} 
\address{
 CMAF, Faculdade de Ci\^{e}ncias,
 Universidade de Lisboa, Campo Grande 1749-016, Lisboa, Portugal}
\email{orlando60@gmail.com}
\date{\today}
\begin{document}
\maketitle

\begin{abstract} 
{We compute the limit of tangents of an arbitrary surface. 
We obtain as a byproduct an embedded version of Jung's desingularization theorem for surface singularities with finite limits of tangents.}
\end{abstract}

\section{Introduction}

Let $S$ be a singular surface of the germ of a complex analytical manifold $M$ at a point $o$.
Theorem 2.3.7 of \cite{LE} states that the limit of tangents $\Sigma_o(S)$ 
is the union of the dual of the tangent cone $C_o(S)$ of $S$ with a finite set of projective lines 
of $\mathbb P_o^*M$. 

Theorem 1.4.4.1 of \cite{AMERICAN} gives a criteria to decide 
if a certain projective line is contained or not in $\Sigma_o(S)$,
provided a non  degeneracy condition is verified. The proof relies on a commutative diagram
\[
\begin{array}{ccccc}
  \Gamma  & &\leftarrow &  & \widetilde\Gamma  \\
  \downarrow & & & &   \downarrow \\
S  & & \leftarrow &  & \widetilde S
\end{array}
\]
where $\widetilde S$ is the strict transform of $S$ by the blow up $\pi:\widetilde M\to M$ of $M$ at $o$,
and $\Gamma$ is the conormal of $S$. 
Hence $S\subset M$, $\widetilde S\subset \widetilde M$ and $\Gamma\subset\mathbb P^*M$:
The surface $\widetilde\Gamma$ has a more enigmatic status.
If we had an immersion of $\widetilde\Gamma$ into a manifold $\mathcal X$ endowed with a symplectic structure
we could iterate the process, blowing up $\widetilde M$ 
and eventually lifting the need for the non degeneracy condition.
Unfortunately there can be no such symplectic structure on $\mathcal X$.

Theorem \ref{CLASSICAL} presents a new proof of Theorem 2.3.7 of \cite{LE}. 
It sheds a new light on the problem. 
Set $E=\pi^{-1}(o)$. 
There is a vector bundle $T^*\langle \widetilde M/E\rangle$ on $\widetilde M$ with sheaf of sections 
the locally free $\mathcal O_{\widetilde M}$-module of
logarithmic differential forms on $\widetilde M$ with poles along $E$.
Let $\pi_{\widetilde M}:\mathbb P^*\langle \widetilde M/E\rangle\to \widetilde M$ be the associated projective bundle.
There is a commutative diagram
\begin{equation}\label{FIRSTBU}
\begin{array}{ccccc}
  \mathbb P^*M  & \overset{\;\;\widehat\pi}{\longleftarrow} &   \mathbb P^*\langle \widetilde M/E\rangle   \\
  \pi_M\downarrow \qquad &  &   \qquad\downarrow \pi_{\widetilde M}\\
  M  & \overset{\;\;\pi}{\longleftarrow} &    \widetilde M
\end{array}
\end{equation}
where $\widehat\pi$ is blow up of $\mathbb P^*M$ along $\pi_M^{-1}(o)$.
The projectivization $\mathbb P^*\langle \widetilde M/E\rangle$ of the vector bundle  $T^*\langle \widetilde M/E\rangle$
provide an ambient space for $\widetilde\Gamma$, which is a \em Legendrian variety \em of 
$\mathbb P^*\langle \widetilde M/E\rangle $, the strict transform of
$\Gamma$ by $\widehat\pi$ and the \em conormal \em of $\widetilde S$. 

There is a canonical embedding of the projective cotangent bundle $\mathbb P^*E$
into $\mathbb P^*\langle \widetilde M/E\rangle$. Moreover, 
$\widetilde\Gamma'=\widetilde\Gamma\cap \pi_{\widetilde M}^{-1}(E)$ is contained
in $\mathbb P^*E$ and $\widetilde\Gamma'$ is a Legendrian variety of $\mathbb P^*E$. 
Since $\pi_{\widetilde M}(\Gamma')=\widetilde S\cap E$, there are $\sigma_1,...,\sigma_n\in \widetilde S\cap E$ such that
\[
\widetilde\Gamma'=\mathbb P^*_{\widetilde S\cap E}E\cup\cup_i \mathbb P^*_{\sigma_i}E.
\]
Here $\mathbb P^*_{\widetilde S\cap E}E$ denotes the \em conormal \em of the curve $\widetilde S\cap E$, the smallest Legendrian curve 
of $\mathbb P^*E$ that projects on $\widetilde S\cap E$, and 
$ \mathbb P^*_{\sigma_i}E$ denotes the fiber at $\sigma_i$ of $\mathbb P^*E$.
The restriction of $\widehat\pi$ to $\mathbb P^*E$ defines a map 
\[
\widehat\pi:\mathbb P^*E\to \mathbb P_o^*M.
\]
Moreover,
\[
\Sigma_o(S)=\widehat\pi(\mathbb P^*_{\widetilde S\cap E}E\cup\cup_i \mathbb P^*_{\sigma_i}E).
\]
More precisely,
the dual curve of $\widetilde S\cap E$ is the image of the Legendrian curve $\mathbb P^*_{\widetilde S\cap E}E$
and the pencils of planes are the images of the projective lines $\mathbb P^*_{\sigma_i}E$.

There is a natural logarithmic generalization of the notion of limit of tangents.
The surface $\widetilde\Gamma$ is the \emph{conormal} of the surface $\widetilde S$, in a sense that will be precised in section \ref{LCG}.
We call $\Sigma^E_\sigma(\widetilde S)=\widetilde\Gamma\cap \pi_{\widetilde M}^{-1}(\sigma)$ the \em logarithmic limit of tangents of $\widetilde S$ at $\sigma$, with poles along $E$. \em 

We reduce in this way the computation of the limit of tangents of a surface to the problem of 
deciding if, given $\sigma\in \widetilde S\cap E$, $\Sigma^E_\sigma(\widetilde S)=\mathbb P^*_\sigma E$ or $\Sigma^E_\sigma(\widetilde S)$ is finite.
This problem is solved by Theorem \ref{DECIDE}.

The introduction of the notion of logarithmic limit of tangents allows us to iterate the construction
that gives us a new proof of Theorem 2.3.7 of \cite{LE}. 
In order to compute the limit of tangents we need to introduce a canonical process of reduction of singularities
for surfaces. 
In general this process will terminate before we desingularize the surface $S$, 
giving us enough information to compute the limit of tangents of $S$. 
If the limit of tangents of $S$ is finite, 
the process will terminate when all singular points of some strict transform of $S$ are quasi ordinary.
See Theorem \ref{JUNG}. We obtain in this way an embedded version of Jung's desingularization algorithm.

Let $C$ be the singular locus of $S$.
Roughly speaking, the algorithm of reduction of singularities proceeds in the following way:
\begin{enumerate}[$(a)$]
\item
We blow up $M$ at $o$;
\item
Given a regular point $\sigma$ of the inverse image $N$ of $o$ by the sequence of
blow ups, we blow up $\sigma$ if $\sigma$ is a singular point of the strict transform $\widetilde C$
of $C$ or $\widetilde C$ is not transversal to $N$ at $\sigma$.
\item
If a strict transform $\widetilde S$ of $S$ by the sequence of blow ups contains a connected component $W$ of the 
singular locus of $N$, we blow up $W$.
\item
If $\sigma$ is an isolated point of $\widetilde S\cap W$, there is a local plane projection $\rho$ such that
$\rho^{-1}(\rho(N))=N$. If the germ at $\rho(\sigma)$ of the discriminant $\Delta_\rho S$ is not contained in $\rho(N)$,
we blow up $W$.
\end{enumerate}
In cases $(c),(d)$ this algorithm works in a very similar way to Jung's algorithm.
In case $(b)$ the singular locus of $S$ takes the place of the discriminant of $S$.

In section \ref{LCG} we introduce some basic notions of Logarithmic Contact Geometry.
In section \ref{LLT} we introduce the notion of 
logarithmic limits of tangents and compute logarithmic limits of tangents of  quasi-ordinary surfaces. 
The fact that a surface $S$ is quasi-ordinary relative to a given projection does not mean 
that it is quasi ordinary relatively to another projection.
The situations changes if the limit of tangents of the surface is finite.
This fact is a key argument in the proofs of Theorems \ref{CHANGESMOOTH} and \ref{CHANGETRANS}.
These Theorems relate the logarithmic limit of tangents of a surface with its discriminant,
in the spirit of one of the statements of Theorem 1.4.4.1 of \cite{AMERICAN}.
Section \ref{FSB} studies the logarithmic limits of tangents $\Sigma_\sigma^N(S)$ 
when $\sigma$ is a singular normal crossings point of $N$.
Here the main tool is the first sequence of blow ups.
It reduces the computation of $\Sigma_\sigma^N(S)$ when $\sigma$ is a singular point of $N$
to the computation of $\Sigma_\sigma^N(S)$ when $\sigma$ is a regular point of $N$.
In section \ref{NDC} we study the behaviour of $\Sigma_\sigma^N(S)$ by blow up when 
the non degeneracy condition \em $C_\sigma(S)$ does not contain $C_\sigma(N)$ \em is verified.

In section \ref{SSB} we study the behaviour of $\Sigma_\sigma^N(S)$ by blow up without assuming the non degeneracy condition.
This is the longest and the more technical section of the paper.
In Differential Geometry it is sometimes unavoidable the use of long computations involving systems of local coordinates.
Here the main tool is the second sequence of blow ups, that will be the building block of the reduction of singularities procedure that decides if $\Sigma_\sigma^N(S)=\Sigma_\sigma^N$, when $\sigma$ is a regular point of $N$. 

In section \ref{MRE} we state the main results and use them to compute several limits of tangents.

The second named author would like to thank Bernard Teissier and Le Dung Trang, who showed him the beauty of Singularity Theory.

\section{Logarithmic Contact Geometry}\label{LCG}

All manifolds considered in this paper are complex analytic manifolds.
Let $N$ be a normal crossings divisor of a  manifold $M$.
We will denote by $\Omega_M^1\langle N\rangle$ the \em sheaf of 
 logarithmic differential forms on $M$ with poles along $N$. \em
If $N=\emptyset$, $\Omega_M^1\langle N\rangle$ 
 equals the sheaf $\Omega_M^1$ of differential forms on $M$.
 
 We will denote by $T^*\langle M/N\rangle$ the vector bundle with sheaf of sections $\Omega_M^1\langle N\rangle$.
 We call $\pi_M:T^*\langle M/N\rangle\to M$ the \em logarithmic cotangent bundle of $M$ with poles along $N$. \em
If $N=\emptyset$, $T^*\langle M/N\rangle$ 
 equals the cotangent bundle $T^* M$ of  $M$.
 
 There is a canonical logarithmic $1$-form $\theta_N$ on $T^*\langle M/N\rangle$ that coincides with the canonical $1$-form 
 $\theta$ of $T^*M$ outside of $\pi_M^{-1}(N)$.
 
 We call the projectivization $\pi_M:\mathbb P^*\langle M/N\rangle\to M$ of $T^*\langle M/N\rangle$ the
\em logarithmic projective cotangent bundle of $M$ with poles along $N$. \em
 
 \begin{example}\em
 Assume $(x_1,...,x_n)$ is a system of coordinates on a open set $U$ of $M$ such that $N\cap U=\{x_1\cdots x_\nu=0\}$.
 Given a differential form $\omega$ on $U$ there are holomorphic functions $a_1,...,a_n\in\mathcal O_M(U)$ such that
 \[
 \omega= \sum_{i=1}^\nu a_i\frac{dx_i}{x_i}+\sum_{i=\nu+1}^n a_i{dx_i}.
 \]
 There are $\xi_1,...,\xi_n\in\mathcal O_{T^*\langle M/N\rangle}(\pi_M^{-1}(U))$ such that
 \[
  \theta_N= \sum_{i=1}^\nu \xi_i\frac{dx_i}{x_i}+\sum_{i=\nu+1}^n \xi_i{dx_i}.
 \]
 \end{example}

Assume $\textrm{dim}\: M=n$. Let $\Gamma$ be an analytic subset of dimension $n$ of $T^*M$. 
We say that $\Gamma$ is \em conic \em if $\Gamma$ 
is invariant by the action of $\mathbb C^*$ on the fibers of $T^*M$.
We say that $\Gamma$ is a \em Lagrangian variety \em if the symplectic form
$d\theta$ of $T^*M$ vanishes on the regular part of $\Gamma$.
Notice that $\Gamma$ is a conic Lagrangian variety if and only if
$\theta$  vanishes on the regular part of $\Gamma$. 

We identify $M$ with the graph of the zero section of $T^*M$.
There is a canonical map $\gamma:T^*M\setminus M\to\mathbb P^*M$.
We say that an analytic subset $\Gamma$ of $\mathbb P^*M$ is a 
\emph{Legendrian variety} of $\mathbb P^*M$ if $\gamma^{-1}(\Gamma)$ is a (conic) Lagrangian
variety of $T^*M$.

Let $S$ be a closed irreducible analytic subset of $M$. 
We call \em conormal of $S$ \em 
to the smallest Legendrian variety $\Gamma$ of $\mathbb P^*M$ such that $\pi_M(\Gamma)=S$.
We will denote it by $\mathbb P_S^*M$. If $S$ has irreducible components $S_i$, $i\in I$, we set
$\mathbb P_S^*M=\cup_i\mathbb P_{S_i}^*M$.

Let $\Gamma$ be a closed analytic subset of $\mathbb P^*\langle M/N\rangle$.
Set $\Gamma'=\Gamma\cap\mathbb P^*(M\setminus N)$. 
We say that $\Gamma$ is a \emph{Legendrian variety}  of $\mathbb P^*\langle M/N\rangle$
if $\Gamma'$ is a Legendrian variety of $\mathbb P^*( M\setminus N)$ and $\Gamma$ is the closure of $\Gamma'$.

Let $S$ be an analytic subset of $M$ such that $S$ equals the closure of $S\setminus N$.
We call \emph{conormal} of $S$ to the closure $\mathbb P^*_S\langle M/N\rangle$
of $\mathbb P^*_{S\setminus N}( M\setminus N)$

Given an irreducible Legendrian variety $\Gamma$ of $\mathbb P^*\langle M/N\rangle$,
$\Gamma=\mathbb P^*_S\langle M/N\rangle$, where $S=\pi_M(\Gamma)$. 
The proof follows the arguments of the equivalent proof in the classical case.

\begin{lemma}\label{RESIDUAL}
\emph{(see \cite{ON1})}
Assume $N$ smooth.
Then there is a canonical immersion of $\mathbb P^*N$ into $\mathbb P^*\langle M/N\rangle$.
If $\Gamma$ is a Legendrian variety of $\mathbb P^*\langle M/N\rangle$, then
\[
\Gamma_0=\Gamma\cap\pi_M^{-1}(N)\subset \mathbb P^*N.
\]
Moreover, $\Gamma_0$ is a Legendrian variety of $\mathbb P^*N$.
\end{lemma}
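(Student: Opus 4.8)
The plan is to fix local coordinates $(x_1,\dots,x_n)$ on an open set of $M$ with $N=\{x_1=0\}$, to carry the argument on the logarithmic cotangent bundle $T^*\langle M/N\rangle$ with its tautological form $\theta_N=\xi_1\,dx_1/x_1+\sum_{i\ge 2}\xi_i\,dx_i$, and to read off the statements about $\mathbb P^*\langle M/N\rangle$ at the end by passing to cones. For the immersion I would start from the residue exact sequence $0\to\Omega_M^1\to\Omega_M^1\langle N\rangle\xrightarrow{\operatorname{res}}\mathcal O_N\to 0$: restricting it to $N$ (tensor with $\mathcal O_N$, using $0\to\mathcal O_M(-N)\to\mathcal O_M\to\mathcal O_N\to 0$ to compute the $\operatorname{Tor}$ term) identifies $\Omega_N^1$ with the kernel of the residue on $\Omega_M^1\langle N\rangle\otimes\mathcal O_N$, hence $T^*N$ with a subbundle of $T^*\langle M/N\rangle|_N$; projectivizing gives a closed immersion $\mathbb P^*N\hookrightarrow\pi_M^{-1}(N)\subseteq\mathbb P^*\langle M/N\rangle$, with image the zero locus of the residue inside $\pi_M^{-1}(N)$, i.e.\ $\{x_1=0,\ \xi_1=0\}$ in the chosen coordinates. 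A direct computation then shows that $\theta_N$ restricts on this image to the canonical $1$-form $\sum_{i\ge 2}\xi_i\,dx_i$ of $T^*N$; I would record this identity, since the third assertion rests on it.

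For the remaining assertions let $\widetilde\Gamma\subseteq T^*\langle M/N\rangle$ be the cone over $\Gamma$, that is, the closure of the conic Lagrangian $\widetilde\Gamma'\subseteq T^*(M\setminus N)$ whose projectivization is $\Gamma\cap\mathbb P^*(M\setminus N)$; thus $\widetilde\Gamma$ is conic of pure dimension $n$, the form $\theta$ (hence $\theta_N$) vanishes on $\widetilde\Gamma'_{\mathrm{reg}}$, and $x_1$ vanishes on no component of $\widetilde\Gamma$, so $\widetilde\Gamma_0:=\widetilde\Gamma\cap\pi_M^{-1}(N)$ is a Cartier divisor of pure dimension $n-1$. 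Choose a resolution $\mu\colon\widehat\Gamma\to\widetilde\Gamma$, an isomorphism over $\widetilde\Gamma_{\mathrm{reg}}$, with $\widehat E:=\mu^{-1}(\widetilde\Gamma_0)$ a normal crossings divisor. Then $\mu^*\theta_N$ is a section of $\Omega^1_{\widehat\Gamma}\langle\widehat E\rangle$ vanishing on the dense open set $\mu^{-1}(\widetilde\Gamma'_{\mathrm{reg}})$, hence vanishing identically. Near the generic point of a component $\{t=0\}$ of $\widehat E$ write $\mu^*x_1=t^{m}u$ with $u$ a unit and $m\ge 1$: then $\mu^*(dx_1/x_1)=d(\mu^*x_1)/\mu^*x_1$ has residue $m$ there, whereas $\mu^*(\sum_{i\ge2}\xi_i\,dx_i)$ is holomorphic, so taking residues along each component of $\widehat E$ in the identity $\mu^*\theta_N=0$ gives $\mu^*\xi_1\equiv 0$ on $\widehat E$. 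Since $\mu(\widehat E)=\widetilde\Gamma_0$, this means $\xi_1$ vanishes on $\widetilde\Gamma_0$, which is the second assertion: $\Gamma_0\subseteq\mathbb P^*N$.

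By the criterion recalled in the excerpt it then suffices, for the third assertion, to show that the canonical $1$-form $\sum_{i\ge2}\xi_i\,dx_i$ of $T^*N$ vanishes on the regular part of $\widetilde\Gamma_0$, which by the previous step is a conic subset of dimension $n-1=\dim N$ of the cone $\{x_1=\xi_1=0\}$ of $\mathbb P^*N$. Rewrite $\mu^*\theta_N=0$ as $\mu^*(\sum_{i\ge2}\xi_i\,dx_i)=-\,\mu^*\xi_1\cdot\mu^*(dx_1/x_1)$. Near a component $\{t=0\}$ of $\widehat E$ we have $\mu^*\xi_1=th$ with $h$ holomorphic (by the second assertion) and $\mu^*(dx_1/x_1)=m\,dt/t+\beta$ with $\beta$ holomorphic, so the right-hand side is $mh\,dt+th\beta$, which restricts to $0$ on $\{t=0\}$ because $t$ and $dt$ do; hence $\mu^*(\sum_{i\ge2}\xi_i\,dx_i)$ restricts to $0$ on $\widehat E$. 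Since $\mu$ carries each component of $\widehat E$ by a generically finite, hence generically \'etale, dominant map onto a component of $\widetilde\Gamma_0$, this vanishing descends and $\sum_{i\ge2}\xi_i\,dx_i$ vanishes on $(\widetilde\Gamma_0)_{\mathrm{reg}}$. This is the Lagrangian condition, so $\Gamma_0$ is a Legendrian variety of $\mathbb P^*N$.

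The genuine obstacle is the third assertion, precisely because $(\widetilde\Gamma_0)_{\mathrm{reg}}$ need not lie in $\widetilde\Gamma_{\mathrm{reg}}$, so one cannot simply restrict ``$\theta_N$ vanishes on $\widetilde\Gamma_{\mathrm{reg}}$'' to $\widetilde\Gamma_0$; the resolution together with the residue bookkeeping is exactly what gets around this. For the second assertion alone the resolution can be avoided: the genuine $1$-form $\vartheta:=x_1\theta_N=\xi_1\,dx_1+x_1\sum_{i\ge2}\xi_i\,dx_i$ vanishes on $\widetilde\Gamma$, and pulling it back along an arc in $\widetilde\Gamma$ that meets $\widetilde\Gamma_0$ only at a prescribed point and comparing leading orders already forces $\xi_1$ to vanish there. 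Throughout I use tacitly that a logarithmic form on a smooth space vanishing on a dense open set vanishes identically, and the standard arc-lifting for proper surjective maps; these are routine.
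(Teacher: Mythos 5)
The paper does not actually prove Lemma \ref{RESIDUAL}: it is quoted from \cite{ON1} with no argument given, so there is no internal proof to compare against; judged on its own, your proof is correct and self-contained, and it runs on exactly the mechanism the lemma's name points to, namely the residue of the logarithmic form. Your identification of $T^*N$ with the kernel of the residue map inside $T^*\langle M/N\rangle|_N$ (the locus $\{x_1=\xi_1=0\}$), the passage to the cone $\widetilde\Gamma$, and the residue computation along the components of $\widehat E$ forcing $\mu^*\xi_1=0$ are all sound, as is the second-order use of $\mu^*\theta_N=0$ to kill $\sum_{i\ge2}\xi_i\,dx_i$ on $\widehat E$ and hence on $(\widetilde\Gamma_0)_{\mathrm{reg}}$. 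The only slip is the assertion that $\mu$ carries \emph{each} component of $\widehat E$ dominantly and generically finitely onto a component of $\widetilde\Gamma_0$: a resolution can contract exceptional components of $\widehat E$ to subsets of dimension $<n-1$, so this is false as stated. What your descent actually requires is the opposite inclusion, namely that every irreducible component $Z$ of $\widetilde\Gamma_0$ is dominated by at least one component of $\widehat E$, and that such a dominating map is generically finite; this follows from surjectivity of $\mu$, $\widehat E=\mu^{-1}(\widetilde\Gamma_0)$, and the fact that both have pure dimension $n-1$, after which generic submersivity gives the vanishing of $\theta'|_{Z_{\mathrm{reg}}}$ on a dense open set and hence everywhere. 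With that one sentence repaired, the argument is complete, including the dimension and conicity checks needed for $\Gamma_0$ to be Legendrian in $\mathbb P^*N$.
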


Section \ref{FSB} studies the intersection of $\Gamma$ with $\pi_M^{-1}(N)$ when $N$ is a singular normal crossings divisor.

\section{Logarithmic Limits of Tangents}\label{LLT}

Let $M$ be a germ of a complex manifold of dimension $3$ at a point $o$.
Let $N$ be a normal crossings divisor of $M$.
Let $S$ be a surface of $M$.
Let 
\[
\Sigma^N_o(S)=\mathbb{P}^*_S\langle M/N \rangle\cap \mathbb{P}^*_o\langle M/N\rangle
\]
 be the \em logarithimc limit of tangents of $S$ along $N$ at the point $o$. \em 
 
 If $N$ is empty we get the usual definition of limit of tangents of $S$ at $o$.
If $N$ is smooth, set 
\[
\Sigma^N_o=\mathbb{P}^*_oN\subset \mathbb{P}^*_o\langle M/N\rangle.
\]

\begin{lemma}\label{CONTIDO}
If $N$ is smooth, $\Sigma^N_o(S)\subset\Sigma^N_o$.
\end{lemma}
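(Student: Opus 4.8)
The statement to prove is Lemma~\ref{CONTIDO}: if $N$ is smooth, then $\Sigma^N_o(S)\subset\Sigma^N_o$. Unwinding the definitions, $\Sigma^N_o(S)=\mathbb P^*_S\langle M/N\rangle\cap\mathbb P^*_o\langle M/N\rangle$ and $\Sigma^N_o=\mathbb P^*_oN$, so what must be shown is that every point of the conormal $\mathbb P^*_S\langle M/N\rangle$ lying over $o$ already lies in the canonically embedded subbundle $\mathbb P^*N\subset\mathbb P^*\langle M/N\rangle$. The plan is to recognize this as an immediate consequence of Lemma~\ref{RESIDUAL}.

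First I would note that $\Gamma:=\mathbb P^*_S\langle M/N\rangle$ is, by definition, a Legendrian variety of $\mathbb P^*\langle M/N\rangle$ (it is the closure of $\mathbb P^*_{S\setminus N}(M\setminus N)$, hence Legendrian in the sense defined just before Lemma~\ref{RESIDUAL}). Since $N$ is smooth, Lemma~\ref{RESIDUAL} applies and gives $\Gamma_0=\Gamma\cap\pi_M^{-1}(N)\subset\mathbb P^*N$. Now observe that $o\in N$ — this is part of the setup, since $N$ is a normal crossings divisor through $o$ (indeed the whole point of the logarithmic limit of tangents is that $o$ is a point of $N$; if $o\notin N$ the logarithmic structure is irrelevant near $o$). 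Therefore $\pi_M^{-1}(o)\subset\pi_M^{-1}(N)$, and intersecting $\Gamma$ with the smaller set $\pi_M^{-1}(o)$ we get
\[
\Sigma^N_o(S)=\Gamma\cap\pi_M^{-1}(o)=\bigl(\Gamma\cap\pi_M^{-1}(N)\bigr)\cap\pi_M^{-1}(o)=\Gamma_0\cap\pi_M^{-1}(o)\subset\mathbb P^*N\cap\pi_M^{-1}(o)=\mathbb P^*_oN=\Sigma^N_o.
\]

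The argument is essentially formal once Lemma~\ref{RESIDUAL} is in hand; there is no real obstacle. The only point needing a word of care is the canonical identification: one must make sure that the embedding $\mathbb P^*N\hookrightarrow\mathbb P^*\langle M/N\rangle$ of Lemma~\ref{RESIDUAL}, restricted to the fiber over $o$, is precisely the inclusion $\mathbb P^*_oN\subset\mathbb P^*_o\langle M/N\rangle$ used in the definition of $\Sigma^N_o$ — i.e. that the two occurrences of ``$\mathbb P^*_oN$'' refer to the same subvariety. This is built into the construction of the residue/restriction map recalled in \cite{ON1}, so the verification is immediate, and the proof is complete.
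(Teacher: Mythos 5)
Your proposal is correct and follows exactly the paper's route: the paper's proof is the one-line "It follows from Lemma \ref{RESIDUAL}," and you have simply spelled out that deduction — $\mathbb P^*_S\langle M/N\rangle$ is Legendrian, Lemma \ref{RESIDUAL} places its intersection with $\pi_M^{-1}(N)$ inside $\mathbb P^*N$, and restricting to the fiber over $o\in N$ gives $\Sigma^N_o(S)\subset\mathbb P^*_oN=\Sigma^N_o$. No discrepancy to report.
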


\begin{proof}
It follows from Lemma \ref{RESIDUAL}.
\end{proof}

Let $\rho$ be a submersion of $M$ into a smooth surface $X$.
We say that $\rho$ is \em compatible \em with $N$ if $\rho^{-1}(\rho(N))=N$.

Let $\Xi_\rho(S)$ be  the \em apparent contour \em of $S$ relatively to the projection $\rho$. 
Let $\Delta_\rho(S)=\rho(\Xi_\rho(S))$ be the \em discriminant \em of $S$ relatively to $\rho$.

Assume $\rho$ is compatible with $N$.
We call the closure $\Xi^N_\rho(S)$ of $\Xi_\rho(S)\setminus N$ the \em logarithmic apparent contour \em  along $N$ of $S$ relatively to $\rho$.

We call \em logarithmic singular set of \em $S$ \em with respect to \em $N$ to the closure $\textrm{Sing}^N(S)$ of 
${\textrm{Sing}(S)\setminus N}$, where $\textrm{Sing}(S)$ is the singular locus of $S$.

We will fix systems of local coordinates $(x,y,z)$, $[(x,y)]$ on $M$ $[X]$ such that
 $o=(0,0,0)$ and $\rho(x,y,z)=(x,y)$. We set $\Delta_z(S)=\Delta_{\rho}(S)$.

\begin{lemma}\label{TRIVIAL}
If $\rho$ is compatible with $N$ and $\Delta_\rho(S)\subset \rho(N)$, 
\[
\Sigma^N_o(S)=\{(0:0:1)\}.
\]
\end{lemma}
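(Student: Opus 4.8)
The plan is to reduce the statement to an explicit, branch‑by‑branch computation of the conormal of $S$, via the Abhyankar--Jung theorem.

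\emph{Reduction to a Weierstrass polynomial.} Since $\rho$ is compatible with $N$, the divisor $N$ is defined near $o$ by an equation in $(x,y)$ alone, so after shrinking we may take $\rho(N)\subseteq\{xy=0\}$. First I would observe that $\Delta_\rho(S)\subseteq\rho(N)$ and compatibility force the apparent contour into $N$: $\Xi_\rho(S)\subseteq\rho^{-1}\bigl(\rho(\Xi_\rho(S))\bigr)=\rho^{-1}\bigl(\Delta_\rho(S)\bigr)\subseteq\rho^{-1}(\rho(N))=N$. In particular $\mathrm{Sing}(S)\subseteq N$, so $S\setminus N$ is smooth, $\rho$ restricts there to a submersion onto $X\setminus\rho(N)$, and $S$ contains no fibre of $\rho$; since $S=\overline{S\setminus N}$, the map $\rho|_S$ is finite. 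Hence a reduced local equation of $S$ at $o$ is, up to a unit, a Weierstrass polynomial $f=z^d+a_1(x,y)z^{d-1}+\cdots+a_d(x,y)$, and $\Delta_\rho(S)$ is the zero set of $\delta:=\mathrm{disc}_z f$. As $\{\delta=0\}\subseteq\rho(N)\subseteq\{xy=0\}$, the function $\delta$ is a unit times a monomial $x^py^q$, with $p>0$ (resp. $q>0$) only if $\{x=0\}$ (resp. $\{y=0\}$) is a component of $N$.

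\emph{Abhyankar--Jung.} Because $\delta$ is a normal crossings monomial, the Abhyankar--Jung theorem factors $f=\prod_{j=1}^d(z-\zeta_j)$ with $\zeta_j\in\C\{x^{1/k},y^{1/k}\}$ for a common $k$ and $\zeta_j(0,0)=0$; over $X\setminus\rho(N)$ the covering $S\setminus N\to X\setminus\rho(N)$ is unramified, so each sheet $z=\zeta_j(x,y)$ is an open piece of $S\setminus N$, and near $\pi_M^{-1}(o)$
\[
\mathbb{P}^*_{S\setminus N}(M\setminus N)=\bigcup_{j=1}^d\Bigl\{\bigl((x,y,\zeta_j),\,[\,dz-\partial_x\zeta_j\,dx-\partial_y\zeta_j\,dy\,]\bigr):(x,y)\in X\setminus\rho(N)\Bigr\}.
\]

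\emph{The limit in the logarithmic frame.} I would then rewrite each conormal direction in a logarithmic frame, i.e. replace $dx$ by $x\,\tfrac{dx}{x}$ if $\{x=0\}\subseteq N$, and likewise for $y$. On the $j$-th sheet the homogeneous coordinates of the point of $\mathbb{P}^*\langle M/N\rangle$ then become, in the variable $x$, either $-x\,\partial_x\zeta_j$ (pole case) or $-\partial_x\zeta_j$ (non‑pole case), similarly for $y$, with last coordinate $1$. As $(x,y)\to(0,0)$ in $X\setminus\rho(N)$ the base point tends to $o$, and the claim is that every non‑last coordinate tends to $0$. In a pole variable, writing $\zeta_j=\widetilde\zeta_j(x^{1/k},y^{1/k})$ with $\widetilde\zeta_j$ holomorphic gives $x\,\partial_x\zeta_j=\tfrac1k\,x^{1/k}\,(\partial_1\widetilde\zeta_j)(x^{1/k},y^{1/k})\to0$. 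In a non‑pole variable, $\delta$ does not involve that variable, so there is no ramification along it; hence all the differences $\zeta_a-\zeta_b$ are units times monomials in the pole variables, so all the branches agree to first order on the locus where that variable vanishes, and their common first‑order coefficient there equals (up to a constant) the corresponding coefficient of $a_1$, which vanishes because the coordinates are chosen adapted to $S$ (the tangent cone of $S$ at $o$ being $\{z=0\}$). Thus every limit point over $o$ equals $(0:0:1)$; since $\mathbb{P}^*_S\langle M/N\rangle$ is the closure of $\mathbb{P}^*_{S\setminus N}(M\setminus N)$ and is nonempty over $o$ (as $o\in S=\overline{S\setminus N}$ and the conormal surjects onto $S$), we conclude $\Sigma^N_o(S)=\{(0:0:1)\}$.

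\emph{Main obstacle.} The delicate point is the last one: that the derivatives of the branches in the non‑pole directions vanish at $o$. This is where the normalization of the coordinates relative to $S$ enters, and it is the step requiring care about which hypotheses are in force; the pole directions are tamed automatically by the extra factor of the pole coordinate, and the rest is bookkeeping with the Abhyankar--Jung factorization in the logarithmic frame.
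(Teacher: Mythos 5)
Your proposal follows essentially the same route as the paper's proof: the hypothesis $\Delta_\rho(S)\subset\rho(N)$ makes $S$ quasi-ordinary for $\rho$, you take the Abhyankar--Jung branches, substitute them into the logarithmic $1$-form $\xi\frac{dx}{x}+\eta\,dy+\zeta\,dz$ (resp.\ with $\frac{dy}{y}$ when $\{y=0\}\subset N$), and take limits of the resulting conormal directions as the base point tends to $o$; the paper does exactly this, writing the branch as $x=t^k$, $z=t^n\varphi(t,y)$ with $\varphi(0,0)\neq 0$ and reading the conclusion off the two displayed equations. Your treatment of the pole variables is correct and is all that is needed in the case $N=\{xy=0\}$.

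The only point of divergence is the non-pole direction, and there you invoke a hypothesis that is not in the statement: that the coordinates are adapted to $S$, i.e.\ $C_o(S)=\{z=0\}$, so that $\partial_y\zeta_j\to 0$. Be aware that some such normalization is genuinely indispensable: for $S=\{z=y\}$, $N=\{x=0\}$ one has $\Delta_\rho(S)=\emptyset\subset\rho(N)$ but $\Sigma^N_o(S)=\{(0:-1:1)\}$, so the lemma as literally printed needs the adaptation. However, this is not a defect specific to your argument: the paper's proof smuggles the same normalization into the assumed parametrization $z=t^n\varphi(t,y)$, $\varphi(0,0)\neq 0$ (with $o\in S$ forcing $n\ge 1$), which is exactly what is available where the lemma is applied (e.g.\ via condition (\ref{AK}) in the proof of Theorem \ref{CHANGESMOOTH}). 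A smaller caveat of the same kind: your deduction that $\rho|_S$ is finite from $S=\overline{S\setminus N}$ is not automatic (a fibre of $\rho$ over a point of $\rho(N)$ could lie in $S$); finiteness, too, is implicitly built into the paper's parametrization. So: same approach, with more explicit bookkeeping; you correctly isolated the hidden normalization, and the fix is simply to state it as a standing hypothesis (or extract it from the context in which the lemma is used) rather than to assert it in passing.
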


\begin{proof}
We can assume $N=\{x=0\}$ or $N=\{xy=0\}$.

Consider the first case. 
The surface $S$ admits a parametrization 
\begin{equation}\label{PARAMETR}
x=t^k,
\qquad
z=t^n\varphi(t,y),
\end{equation}
where $\varphi\in \mathbb C\{t,y\}$ and $\varphi(0,0)\neq 0$.
Replacing (\ref{PARAMETR}) in 
\begin{equation}\label{XIDXX}
\xi\frac{dx}{x}+\eta dy+\zeta dz
\end{equation}
we conclude that $\mathbb P^*_S\langle M/N\rangle$
 is contained in the image, by
 $(t,y;\xi:\eta:\zeta)\mapsto (t^k,y,t^n\varphi(t,y);\xi:\eta:\zeta)$, of 
 the set defined by the equations
\[
k\xi+t^n(n\varphi+t\partial_t\varphi)\zeta=0,
\qquad
\eta+t^n\partial_y\varphi\zeta=0.
\]
The proof in the second case is similar.
\end{proof}

Given $\sigma\in S$, let $m_\sigma(S)$ denote the multiplicity of $S$ at $\sigma$.

\begin{lemma}\label{TRIVIALTRANSVERSAL}
Assume $N=\{x=0\}$, 
$S$ admits a fractional power expansion $z=x^\lambda y^\mu\varphi(x^{1/d},y^{1/d})$, 
with $\varphi(0,0)\not=0$, $(\lambda,\mu)\in\mathbb Q^2\setminus \mathbb Z^2$, $\mu\not=0$ and
Sing$^N(S)=\{y=z=0\}$.
The following statements are equivalent:
\begin{enumerate}[$(a)$]
\item $\Sigma^N_o(S)=\{(0:0:1)\}$,
\item $\Sigma^N_o(S)\not=\Sigma^N_o$,
\item $\mu\ge 1$,
\item there is a neighbourhood $U$ of $o$ such that $m_\sigma(S)=m_o(S)$ for each $\sigma\in$Sing$^N(S)\cap U$.
\end{enumerate}
\end{lemma}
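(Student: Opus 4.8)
The plan is to follow the proof of Lemma~\ref{TRIVIAL} closely: parametrize $S$, pull the canonical logarithmic $1$-form back to the parameter plane, and read off $\mathbb P^{*}_{S}\langle M/N\rangle\cap\pi_{M}^{-1}(o)$ from the resulting equations. Put $a=d\lambda$ and $b=d\mu$, so that $S$ is the closure of the image of $g(s,t)=(s^{d},t^{d},s^{a}t^{b}\varphi(s,t))$ with $\varphi(0,0)\neq0$. The condition $\textrm{Sing}^{N}(S)=\{y=z=0\}$ forces $\mu\notin\mathbb Z$ (otherwise $\lambda\notin\mathbb Z$ and $S$ is smooth off $N$), hence $b\ge 1$ and $d\nmid b$; in particular $b\neq d$. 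Pulling $\xi\frac{dx}{x}+\eta\,dy+\zeta\,dz$ back by $g$ and multiplying the $ds$-component by $s$, one gets, exactly as in the proof of Lemma~\ref{TRIVIAL}, that $\mathbb P^{*}_{S}\langle M/N\rangle$ is the closure of the image of
\[
d\xi+\zeta\,s^{a}t^{b}(a\varphi+s\partial_{s}\varphi)=0,\qquad
d\eta\,t^{d-1}+\zeta\,s^{a}t^{b-1}(b\varphi+t\partial_{t}\varphi)=0
\]
under $(s,t;\xi:\eta:\zeta)\mapsto(g(s,t);\xi:\eta:\zeta)$. Since $g(s,t)=o$ only in the limit $(s,t)\to(0,0)$, $\Sigma^{N}_{o}(S)$ is exactly the set of accumulation points of $(\xi:\eta:\zeta)$ as $(s,t)\to(0,0)$ along this locus. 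Now $(a)\Rightarrow(b)$ is immediate since $\Sigma^{N}_{o}=\{\xi=0\}$ is a line, and $\Sigma^{N}_{o}(S)\subseteq\Sigma^{N}_{o}$ is a closed — hence finite or total — subset of it by Lemma~\ref{CONTIDO}; so it suffices to prove $(c)\Rightarrow(a)$, $\neg(c)\Rightarrow\neg(b)$, and $(c)\Leftrightarrow(d)$.

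For $(c)\Rightarrow(a)$ I would argue as follows. If $\mu\ge1$ then $b>d$ (recall $b\neq d$), so from the second equation $\eta=-\tfrac{\zeta}{d}\,s^{a}t^{b-d}(b\varphi+t\partial_{t}\varphi)$, and from the first $\xi=-\tfrac{\zeta}{d}\,s^{a}t^{b}(a\varphi+s\partial_{s}\varphi)$; since $b-d\ge1$ and $b\ge1$, both factors multiplying $\zeta$ go to $0$ as $(s,t)\to(0,0)$. Normalizing whichever of $|\xi|,|\eta|,|\zeta|$ is largest along a convergent subsequence and inserting into the two equations shows that $\zeta$ must be the dominant coordinate; hence the only accumulation point is $(0:0:1)$, i.e.\ $\Sigma^{N}_{o}(S)=\{(0:0:1)\}$. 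For $\neg(c)\Rightarrow\neg(b)$: if $\mu<1$ then $b<d$, and here $a\ge1$ (equivalently $\lambda\neq0$), which is available because $\rho$ is transverse to $S$, forcing $\lambda+\mu\ge1$. Rewrite the second equation as $d\eta\,t^{d-b}=-\zeta\,s^{a}(b\varphi+t\partial_{t}\varphi)$ and let $(s,t)\to(0,0)$ along $s=c\,\tau^{d-b}$, $t=\tau^{a}$, so that $s^{a}$ and $t^{d-b}$ have the same order in $\tau$: then $\eta/\zeta\to-c^{a}b\varphi(0,0)/d$ and $\xi/\zeta\to0$. As $c$ ranges over $\mathbb C^{*}$ and $a\ge1$, the quantity $c^{a}$ ranges over $\mathbb C^{*}$, so these limits realize every point of $\{(0:\eta:\zeta):\eta\zeta\neq0\}$, while the degenerate members of the same family — $s$ tending to $0$ much faster, resp.\ much slower, than $t^{(d-b)/a}$ — give the two remaining points $(0:0:1)$ and $(0:1:0)$. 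Therefore $\Sigma^{N}_{o}(S)=\Sigma^{N}_{o}$, i.e.\ $(b)$ fails. Combining, $(a)\Leftrightarrow(b)\Leftrightarrow(c)$.

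For $(c)\Leftrightarrow(d)$ I would use the upper semicontinuity of the multiplicity: since $\textrm{Sing}^{N}(S)=\{y=z=0\}$ is a smooth curve through $o$, $(d)$ holds iff $m_{o}(S)=m_{\sigma}(S)$ for a general point $\sigma\in\{y=z=0\}$ near $o$. Off $N$ the parametrization exhibits $S$ as a union of $\gcd(b,d)/\gcd(a,b,d)$ cuspidal cylinders $z^{d'}=(\text{unit})\,y^{b'}$ with $b'=b/\gcd(b,d)$, $d'=d/\gcd(b,d)$, each of multiplicity $\min(b',d')$; hence $m_{\sigma}(S)=\min(b,d)/\gcd(a,b,d)$. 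On the other hand $m_{o}(S)$, which equals the degree of $\rho|_{S}$ by transversality, is $d/\gcd(a,b,d)$. Thus $m_{o}(S)=m_{\sigma}(S)$ iff $d=\min(b,d)$, i.e.\ iff $b\ge d$, i.e.\ iff $\mu\ge1$, which is $(c)\Leftrightarrow(d)$.

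The main obstacle I expect is the limit computation when $\mu<1$: one must check carefully that the monomial families genuinely sweep out all of the line $\Sigma^{N}_{o}=\{\xi=0\}$, including the two coordinate points where the dominant projective coordinate changes, and that $a\ge1$ is genuinely forced by the standing hypotheses. The bookkeeping for $(c)\Leftrightarrow(d)$ — determining $m_{o}(S)$ and the number and type of local branches of $S$ along $\{y=z=0\}$ — is routine but is where the transversality of $\rho$ to $S$ enters in an essential way.
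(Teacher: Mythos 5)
Your treatment of the equivalence of $(a)$, $(b)$, $(c)$ follows essentially the paper's own route: parametrize $S$, pull back $\xi\,dx/x+\eta\,dy+\zeta\,dz$, and, for $\mu<1$, sweep out the line $\Sigma_o^N$ with the family $s=c\,\tau^{d-b}$, $t=\tau^{a}$ (the paper's $s=Ct^{m/(k-n)}$). But two supporting claims are genuinely unjustified. First, the hypotheses do not force $\mu\notin\mathbb Z$: for $z=x^{1/2}y(1+y^{1/2})$ one has $\mu=1$ and Sing$^N(S)=\{y=z=0\}$, because fractional powers of $y$ inside $\varphi$ can themselves produce the singularity; so you may not assume $b\neq d$, and the case $\mu=1$ must be argued separately (it survives, since then $\lambda\notin\mathbb Z$, hence $a\ge1$ and $s^{a}\to0$ still forces $\eta/\zeta\to0$). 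Second, and more seriously, in the $\mu<1$ direction you obtain $a\ge1$ from ``$\rho$ is transverse to $S$, forcing $\lambda+\mu\ge1$'': no such hypothesis appears in the lemma, transversality of $\rho$ to $S$ is not a consequence of the stated assumptions (for $z=x^{1/3}y^{1/2}$ the tangent cone $\{x^{2}y^{3}=0\}$ contains the $z$-axis), and positivity of $\lambda$ itself cannot be derived from them: $z=y^{1/2}(1+x^{1/2})$ satisfies every stated hypothesis with $\lambda=0$, and there the limit set is the single point $(0:1:0)$. The paper's proof tacitly assumes $\lambda>0$ by taking the exponents $m,n,k$ positive, so this is an implicit hypothesis to be flagged, not something you can manufacture from an unavailable transversality statement.

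The step $(c)\Leftrightarrow(d)$, which the paper dispatches by citing standard facts on quasi-ordinary surfaces, is where your argument genuinely fails. The formulas $m_{\sigma}(S)=\min(b,d)/\gcd(a,b,d)$ and $m_{o}(S)=d/\gcd(a,b,d)$ ignore the fractional powers of $x$ and $y$ occurring in $\varphi$: for $z=x^{1/2}y(1+y^{1/2})$ (so $a=1$, $b=d=2$) the transversal slice $x=c$ consists of two cusps, not the smooth sections predicted by your cuspidal-cylinder count, and $m_{o}(S)=m_{\sigma}(S)=4$, not $2$. Likewise the identity $m_{o}(S)=\deg\rho|_{S}$ requires the fibre direction of $\rho$ to avoid $C_{o}(S)$, which again is neither assumed nor automatic: $z=x^{1/3}y^{1/2}$ gives $m_{o}(S)=5<6=\deg\rho|_{S}$. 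So the multiplicity bookkeeping must be redone, either by invoking the known structure theory of quasi-ordinary surface singularities, as the paper does, or by a computation that takes all characteristic exponents of the expansion into account rather than only the pair $(\lambda,\mu)$.
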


\begin{proof}
The equivalence between $(c)$ and $(d)$ follows from well known facts on quasi-ordinary surfaces.
The prove that $(a)$ follows from $(c)$ is similar to the proof of Lemma \ref{TRIVIAL}. Moreover, $(a)$ implies $(b)$.

Assume $\mu<1$.
There are positive integers $m,n,k$ such that $n<k$ and
\begin{equation}\label{PARAS}
x=t^k,
\qquad
y=s^k,
\qquad
z=t^ms^n\varphi(t,s)
\end{equation}
is a parametrization of $S$. Replacing (\ref{PARAS}) in (\ref{XIDXX})
we conclude that
\begin{equation}\label{EQ}
k\eta+t^ms^{n-k}(m\varphi+s\partial_s\varphi)\zeta=0.
\end{equation}
Setting $s=Ct^{m/(k-n)}$ in (\ref{EQ}) and taking limits we conclude that
\[
k\eta+C^{n-k}m\varphi(0,0)\zeta=0.
\]
Hence the limits of tangents along the curves 
\[
x=t^k,
\qquad
y=Ct^{mk/(k-n)},
\qquad
z=t^{mk/(k-n)}\varphi(t,Ct^{m/(k-n)})
\]
of $S$ are dense in $\Sigma_o^N$.
Therefore $(b)$ is false.
\end{proof}

\begin{theorem}\label{CHANGESMOOTH}
Assume $o\in S\cap N$ is a smooth point of $N$ and the singular locus of $S$ is contained in $N$ in a neighbourhood of $o$.
Let $\rho$ be a projection compatible with $N$.
Then $\Sigma_o^N(S)=\Sigma_o^N$
if and only if $o\in \Xi_\rho^N(S)$.
\end{theorem}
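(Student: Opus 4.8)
The plan is to reduce the statement to the concrete computations already performed in Lemmas \ref{TRIVIAL} and \ref{TRIVIALTRANSVERSAL} by first putting the surface into a normal form adapted to $N=\{x=0\}$ and the projection $\rho(x,y,z)=(x,y)$. Since the singular locus of $S$ lies in $N$ near $o$, the germ of $S$ is normal crossings with $N$ away from finitely many fibres, and $\mathrm{Sing}^N(S)$ is a (possibly empty) curve contained in $\{x=0\}$; after a coordinate change compatible with $\rho$ we may assume $\mathrm{Sing}^N(S)\subset\{y=z=0\}$. Using the Jung–Abhyankar theorem for the quasi-ordinary-type branch transverse to $N$, $S$ admits a fractional power expansion $z=x^\lambda y^\mu\varphi(x^{1/d},y^{1/d})$ with $\varphi(0,0)\neq 0$. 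There are then two cases: either $(\lambda,\mu)\in\Z^2$ (equivalently $S$ is smooth at $o$, or $S\cap N$ meets the smooth locus with multiplicity controlled by integers), in which case $S$ is smooth or $\Delta_\rho(S)\subset\rho(N)$ and Lemma \ref{TRIVIAL} applies; or $(\lambda,\mu)\in\Q^2\setminus\Z^2$ with $\mu\neq 0$, which is exactly the hypothesis of Lemma \ref{TRIVIALTRANSVERSAL}.

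Granting this normal form, I would argue as follows. If $\mu=0$ then $S$ is (a union of branches) of the form $z=x^\lambda\varphi$, so the apparent contour $\Xi_\rho(S)$, which is the locus where $dz$ is proportional to $\rho^*$ of a form on $X$, i.e. the critical locus of $\rho|_S$, is contained in $N$; hence $\Xi^N_\rho(S)=\overline{\Xi_\rho(S)\setminus N}=\emptyset$, so $o\notin\Xi^N_\rho(S)$, and simultaneously the computation in Lemma \ref{TRIVIAL} (with the roles of the branches unchanged) gives $\Sigma^N_o(S)=\{(0:0:1)\}\neq\Sigma^N_o$. If $\mu\neq 0$, Lemma \ref{TRIVIALTRANSVERSAL} tells us that $\Sigma^N_o(S)=\Sigma^N_o$ is equivalent to $\mu<1$. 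So it remains to identify the condition $\mu<1$ with the condition $o\in\Xi^N_\rho(S)$. For that I would compute $\Xi_\rho(S)$ directly from the parametrization (\ref{PARAS}): the critical locus of $\rho|_S$ in the $(t,s)$-chart is where the Jacobian of $(t^k,s^k)$ against the $z$-coordinate degenerates, which after dividing out the monomial factor is governed by $s^{n-k}(m\varphi+s\partial_s\varphi)$; since $n<k$ exactly when $\mu<1$, one sees that $\Xi_\rho(S)$ has a branch not contained in $\{s=0\}=N$ through the origin precisely when $\mu<1$, i.e. $o\in\Xi^N_\rho(S)\iff\mu<1$. Combining the two equivalences finishes the proof. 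The key point that makes this independent of the chosen compatible projection $\rho$ is the remark in the introduction that finiteness of the limit of tangents forces quasi-ordinariness relative to every compatible projection, so the normal form above is available for any such $\rho$.

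The main obstacle I anticipate is the reduction to normal form in full generality: $S$ need not be irreducible, so I must handle each branch of $S$ through $o$ separately and check that the apparent contour and the logarithmic limit of tangents behave well under taking unions (they do, since both $\Xi^N_\rho$ and $\mathbb P^*_S\langle M/N\rangle$ are defined componentwise), and I must make sure that the hypothesis ``$\mathrm{Sing}(S)\subset N$ near $o$'' genuinely forces every branch of $S$ to be quasi-ordinary with respect to $\rho$ with the required shape — in particular that no branch is tangent to $N$ in a way that spoils the expansion. A secondary technical point is the case where a branch of $S$ is itself contained in $N$; such a branch contributes nothing to $\Xi^N_\rho(S)$ nor to $\mathbb P^*_S\langle M/N\rangle$ by definition (both are closures of the part away from $N$), so it can be discarded at the outset, but this should be stated explicitly.
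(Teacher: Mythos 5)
Your reduction to a quasi-ordinary normal form is the step that fails, and it fails precisely in the hard direction of the equivalence. An expansion $z=x^{\lambda}y^{\mu}\varphi(x^{1/d},y^{1/d})$ with $\varphi(0,0)\neq 0$ exists (Jung--Abhyankar) only when $S$ is quasi-ordinary relative to $\rho$, i.e.\ when $\Delta_\rho(S)$ is a normal crossings divisor contained in $\{xy=0\}$ in suitable coordinates. The hypotheses of Theorem \ref{CHANGESMOOTH} do not give this: when $o\in\Xi^N_\rho(S)$ the discriminant has a branch through $\rho(o)$ not contained in $\rho(N)$, and it can be arbitrarily singular or tangent to $\rho(N)$. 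For instance $S=\{z^2=x(y^2-x)\}$, $N=\{x=0\}$, satisfies $\mathrm{Sing}(S)=\{o\}\subset N$, while $\Delta_\rho(S)=\{x(y^2-x)=0\}$ is a parabola tangent to the line $x=0$; no expansion of your form exists, so your case analysis on $\mu$ never starts, exactly in the case the theorem is really about. Your fallback, ``finiteness of the limit of tangents forces quasi-ordinariness relative to every compatible projection,'' is circular: in this paper that statement is a \emph{consequence} of Theorems \ref{CHANGESMOOTH} and \ref{CHANGETRANS} (it is how Theorem \ref{JUNG} is proved), not an available ingredient, and in any case it presupposes $\Sigma^N_o(S)$ finite, which you cannot assume while proving $o\in\Xi^N_\rho(S)\Rightarrow\Sigma^N_o(S)=\Sigma^N_o$. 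You have also transplanted the setting of Theorem \ref{CHANGETRANS}: under the present hypothesis $\mathrm{Sing}(S)\subset N$ near $o$, the set $\mathrm{Sing}^N(S)$ (the closure of $\mathrm{Sing}(S)\setminus N$) is empty near $o$; it cannot be ``a curve contained in $\{x=0\}$,'' nor normalized to $\{y=z=0\}$, so Lemma \ref{TRIVIALTRANSVERSAL}, whose hypothesis is $\mathrm{Sing}^N(S)=\{y=z=0\}$, is not applicable here. Even inside the setting you set up, the identification $o\in\Xi^N_\rho(S)\Leftrightarrow\mu<1$ is wrong: for $z=x^{1/2}y^{3/2}$ the contour contains $\{y=z=0\}\not\subset N$, so $o\in\Xi^N_\rho(S)$, while $\mu\ge 1$ gives a finite limit; this is exactly why Theorem \ref{CHANGETRANS} states its criterion with $\overline{\Xi^N_\rho(S)\setminus\mathrm{Sing}^N(S)}$ rather than $\Xi^N_\rho(S)$.

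What is missing is the mechanism that actually carries the hard implication. Only the easy direction comes from a normal form: if $o\notin\Xi^N_\rho(S)$ then near $o$ the discriminant lies in $\rho(N)$, $S$ has an expansion $z=\varphi(x^{1/d},y)$, and Lemma \ref{TRIVIAL} gives $\Sigma^N_o(S)=\{(0:0:1)\}$. For the converse the paper does not compute a normal form at all; it proves the invariance statement (\ref{MAGICEQUIV}), namely that membership of $o$ in $\Xi^N_{\rho_\lambda}(S)$ for the sheared projections $\rho_\lambda(x,y,z)=(x,y+\lambda z)$ is independent of $\lambda$ (by solving the implicit equation for the new Puiseux expansion after the shear). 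Then $o\in\Xi^N_\rho(S)$ forces $o\in\Xi^N_{\rho_\lambda}(S)$ for every $\lambda$, each $\lambda$ produces a limit covector annihilating $\ker d\rho_\lambda$, i.e.\ a distinct point of $\Sigma^N_o(S)$ inside the line $\Sigma^N_o$, and an analytic subset of a projective line containing infinitely many points is the whole line. Your proposal contains no argument of this type, so the implication $o\in\Xi^N_\rho(S)\Rightarrow\Sigma^N_o(S)=\Sigma^N_o$ remains unproved.
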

\begin{proof}
We can assume $S$ irreducible.
There is a system of local coordinates $(x,y,z)$ centered at $o$
 such that $N=\{ x=0\}$ and
$S=\{F=0\}$, for some $F\in\mathbb C\{x,y,z\}$.

Assume $o\not\in \Xi_\rho^N(S)$. Then $S$ admits a fractional power series expansion $z=\varphi(x^{1/d},y)$, where
$\varphi\in \mathbb C\{ x^{1/d},y\}$, for some positive integer $d$, and there are $a_\ell\in\mathbb C\{y\}$ such that $x=t^k$,
\begin{equation}\label{AK}
\varphi (t,y)=\sum_\ell a_\ell(y)t^\ell \qquad \hbox{and} \qquad
a_\ell=0 \hbox{ or } a_\ell(0)\not=0,
\end{equation}
for each $\ell$. By Lemma \ref{TRIVIAL}, $\Sigma^N_o(S)=\langle dz \rangle$.

Set $\rho_\lambda (x,y,z)=(x,y+\lambda z )$. 
In order to prove the other implication it is enough to show that, for each $\lambda\in\mathbb C$,
\begin{equation}\label{MAGICEQUIV}
o\not\in\Xi^N_\rho(S) ~ \hbox{ if and only if } ~ o\not\in\Xi^N_{\rho_\lambda}(S).
\end{equation}
Notice that if $o\in\Xi_{\rho_\lambda}(S)$ for each $\lambda\in\mathbb C$, 
\[
\langle dy-\lambda dz\rangle \in \Sigma_o(S), \qquad \hbox{for each }\lambda \in\mathbb C.
\]
Let us prove (\ref{MAGICEQUIV}).
Assume $o\not\in \Xi_\rho^N(S)$.
Set $\Phi(x,y,z)=(x,y-\lambda z,z)$, $G=F\circ \Phi$.
Set $h(t,y,z)=z-\varphi (t,y+\lambda z)$.
There is $\psi\in\mathbb C\{t,y\}$ such that $h(t,y+\lambda\psi(t,y),z)=0$.
Hence
\begin{equation}
\psi(t,y)=\varphi(t,y+\lambda\psi(t,y))
\end{equation}
and $G$ admits the fractional power series expansion $z=\psi(x^{1/d},y)$. 
Moreover,
there are $b_k\in\mathbb C\{y\}$ such that
\[
\psi (t,y)=\sum_k b_k(y)t^k \qquad \hbox{and} \qquad
b_k=0 \hbox{ or } b_k(0)\not=0.
\]
Therefore
$\Delta_zG\subset\{x=0\}$.
\end{proof}

\begin{theorem}\label{CHANGETRANS}
Assume Sing$^N(S)$ is transversal to $N$ at $o$.
Let $\rho$ be a projection compatible with $N$.
Then $\Sigma_o^N(S)\not=\Sigma_o^N$ if and only if $o\not\in \overline{\Xi^N_\rho(S)\setminus \textrm{Sing}^N(S)}$ and there is an open neighbourhood $U$ of $o$ such that $m_{\sigma}(S)= m_o(S)$, for each $\sigma\in$Sing$^N(S)\cap U$.
\end{theorem}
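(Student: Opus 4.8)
The plan is to reduce Theorem \ref{CHANGETRANS} to the already-established Lemma \ref{TRIVIALTRANSVERSAL}, much as Theorem \ref{CHANGESMOOTH} was reduced to Lemma \ref{TRIVIAL}. First I would fix a system of local coordinates $(x,y,z)$ centred at $o$ so that $N=\{x=0\}$ and, since $\textrm{Sing}^N(S)$ is transversal to $N$ at $o$, so that $\textrm{Sing}^N(S)=\{y=z=0\}$; by the remark following Lemma \ref{RESIDUAL} we may also assume $S$ irreducible, with $S=\{F=0\}$ for some $F\in\mathbb C\{x,y,z\}$. As in the proof of Theorem \ref{CHANGESMOOTH}, it suffices to handle the family of projections $\rho_\lambda(x,y,z)=(x,y+\lambda z)$, together with the projection $(x,y,z)\mapsto(x,z)$, since these generate all projections compatible with $N=\{x=0\}$ up to the coordinate changes that preserve $N$ and $\textrm{Sing}^N(S)$.

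The crux is to obtain, in these coordinates, a fractional power expansion of $S$ of the shape demanded by Lemma \ref{TRIVIALTRANSVERSAL}. First I would dispose of the degenerate case: if the generic projection $\rho$ has $o\in\overline{\Xi^N_\rho(S)\setminus\textrm{Sing}^N(S)}$, then $S$ genuinely ramifies over $\{x=0\}$ along a curve other than $\textrm{Sing}^N(S)$, the expansion of $S$ then involves $y^{1/d}$ nontrivially, and one shows directly (by the same limit-along-curves computation as in the second half of the proof of Lemma \ref{TRIVIALTRANSVERSAL}) that $\Sigma^N_o(S)=\Sigma^N_o$; so both the left side is false and the right side is false. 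Assuming $o\notin\overline{\Xi^N_\rho(S)\setminus\textrm{Sing}^N(S)}$, the ramification of $S\to X$ is concentrated on $\textrm{Sing}^N(S)\cup\{x=0\}$, and quasi-ordinary theory gives an expansion $z=x^\lambda y^\mu\varphi(x^{1/d},y^{1/d})$ with $\varphi(0,0)\neq 0$; transversality of $\textrm{Sing}^N(S)$ to $N$ forces $\lambda$ to be the relevant integer (no $x$-ramification off $N$ beyond what the logarithmic pole absorbs, so $\lambda\in\mathbb Z$) while $\mu\neq 0$ because $\{y=z=0\}$ is actually singular. Now $(\lambda,\mu)\in\mathbb Q^2\setminus\mathbb Z^2$ precisely when $\mu\notin\mathbb Z$, and in that situation Lemma \ref{TRIVIALTRANSVERSAL} applies verbatim: $\Sigma^N_o(S)\neq\Sigma^N_o$ iff $\mu\ge 1$ iff $m_\sigma(S)=m_o(S)$ along $\textrm{Sing}^N(S)$ near $o$. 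The remaining case $\mu\in\mathbb Z_{>0}$ (so $S$ is smooth along $\textrm{Sing}^N(S)$, contradicting that it is the singular locus — hence this case is empty) or $\mu\in\mathbb Z_{\le 0}$ would need separate and easy treatment.

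Finally I would verify the invariance statement analogous to (\ref{MAGICEQUIV}): that the condition ``$o\notin\overline{\Xi^N_{\rho_\lambda}(S)\setminus\textrm{Sing}^N(S)}$ and $m_\sigma(S)=m_o(S)$ near $o$ along $\textrm{Sing}^N(S)$'' is independent of $\lambda$. The multiplicity condition is intrinsic to $S$ and $\textrm{Sing}^N(S)$, hence obviously $\lambda$-independent. For the apparent-contour condition one substitutes $\Phi(x,y,z)=(x,y-\lambda z,z)$, $G=F\circ\Phi$, and, exactly as in Theorem \ref{CHANGESMOOTH}, solves $z=\psi(x^{1/d},y)$ with $\psi(t,y)=\varphi(t,y+\lambda\psi(t,y))$ where now $\varphi(t,y)=t^{\lambda}\sum_j c_j(y)t^{j}$ with $c_j(y)$ a unit or zero and with $y$ appearing only through integral powers (that is, $d=1$ after the previous step) — one checks that the fixed-point $\psi$ inherits the same property, so $\Delta_z G\subset\{x=0\}$ away from the transform of $\textrm{Sing}^N(S)$. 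The main obstacle I anticipate is precisely this last bookkeeping: controlling how the fractional exponents in $y$ behave under the shear $y\mapsto y+\lambda z$ when $z$ itself has an $x^\lambda y^\mu$ factor, and confirming that the quasi-ordinary normal form is preserved; once that is in hand, Lemma \ref{TRIVIALTRANSVERSAL} does all the real work and the theorem follows.
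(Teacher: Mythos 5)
Your overall route is the paper's own: normalize coordinates so that $N=\{x=0\}$ and $\textrm{Sing}^N(S)=\{y=z=0\}$, use Lemma \ref{TRIVIALTRANSVERSAL} to link $\Sigma^N_o(S)\neq\Sigma^N_o$ with the exponent of $y$ and the multiplicity condition, and then prove a shear-invariance of the contour condition analogous to (\ref{MAGICEQUIV}). But the decisive step of the paper's proof is exactly the one you leave as an ``anticipated obstacle'', and your preparation for it is wrong. The invariance (\ref{MAGICEQUIVXY}) must be proved for an expansion (\ref{PUISEUXXY}), $z=x^{\mu}y^{\nu}\varphi(x^{1/d},y^{1/d})$, in which $y$ occurs with genuinely fractional exponents; your sketch assumes that after a preliminary step $y$ appears only through integral powers (``$d=1$''), but this cannot be arranged --- the transversal case exists precisely because $\nu$ is fractional in general (e.g.\ $z^{2}=x^{2}y^{3}$), and with integral powers of $y$ one is essentially back in the situation of Theorem \ref{CHANGESMOOTH}. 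What is actually needed, and what the paper supplies, is the fixed-point computation (\ref{CHANGEPARXY1})--(\ref{CHANGEPARXY4}) in the ramified variables $t=x^{1/d}$, $s_{*}=y_{*}^{1/d}$: one solves $s=s_{*}\bigl(1+t^{\mu d}s_{*}^{(\nu-1)d}\varphi\psi\bigr)$ for $\psi$, checks that $\psi$ is well defined with $\psi(0,0)=\lambda/d$, and concludes that after the shear the surface again has the form $z_{*}=x_{*}^{\mu}y_{*}^{\nu}\phi(x_{*}^{1/d},y_{*}^{1/d})$ with $\phi(0,0)\neq0$, i.e.\ the quasi-ordinary normal form with the same exponents and a unit is preserved. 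Without this computation the necessity direction is not proved.

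Two subsidiary claims are also incorrect. Transversality of $\textrm{Sing}^N(S)$ to $N$ does not force the exponent of $x$ to be an integer: $z^{2}=x^{3}y^{3}$ and $z^{2}=x^{3}y^{2}$ have $\textrm{Sing}^N(S)=\{y=z=0\}$ transversal to $\{x=0\}$ with $x$-exponent $3/2$; consequently your equivalence ``$(\lambda,\mu)\notin\mathbb Z^{2}$ iff $\mu\notin\mathbb Z$'' and the claim that the case $\mu\in\mathbb Z_{>0}$ is empty both fail (in $z^{2}=x^{3}y^{2}$ the $y$-exponent equals $1$, yet $S$ is singular along $\{y=z=0\}$). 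This is repairable, since Lemma \ref{TRIVIALTRANSVERSAL} only requires the pair of exponents to lie outside $\mathbb Z^{2}$ with nonzero $y$-exponent, which the paper arranges by a change of coordinates. More problematic is your ``degenerate case'': if the contour condition fails, $S$ need not be quasi-ordinary with respect to the projection at all, so there is no fractional power expansion to feed into ``the same limit-along-curves computation'' of Lemma \ref{TRIVIALTRANSVERSAL}; and even granting $\Sigma^{N}_{o}(S)=\Sigma^{N}_{o}$ in that case, concluding that the right-hand side fails for the \emph{given} $\rho$ again requires the projection-invariance you have not yet established, so the case cannot be disposed of first. The paper avoids this circularity by arguing in the other order: from $\Sigma^{N}_{o}(S)\neq\Sigma^{N}_{o}$ it deduces $\nu\ge1$ (hence the multiplicity condition) and the existence of one good $\rho_{\lambda}$, and then transfers the contour condition to every $\lambda$, in particular to the given projection, by the functional-equation computation described above.
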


\begin{proof}
We can assume that $S$ irreducible,
 $N=\{ x=0\}$ and $\textrm{Sing}^N(S)=\{y=z=0\}$.
 Therefore $S$ admits a fractional power expansion 
\begin{equation}\label{PUISEUXXY}
z=x^{\mu}y^{\nu}\varphi(x^{1/d},y^{1/d}),
\end{equation}
where $\varphi\in\mathbb C\{x,y\}$, $\varphi(0,0)\neq 0$ and $\mu,\nu\in\mathbb Q$,
for some positive integer $d$. 
After an eventual a change of coordinates, we can assume $(\mu,\nu)\not\in \mathbb Z^2$.

Let us show that the condition is sufficient.
Since $\textrm{Sing}^N(S)$ is transversal to $N$ at $o$, $\nu\neq 0$. 
Since  $\sigma\in \textrm{Sing}^N(S)\cap U$ implies $m_{\sigma}(S)=m_o(S)$, $\nu\geq 1$. 
Hence, by Lemma \ref{TRIVIALTRANSVERSAL}, $\Sigma^N_o(S)=\langle dz \rangle$. 

\noindent
Assume $\Sigma_o^N(S)\not=\Sigma_o^N$. 
By Lemma \ref{TRIVIALTRANSVERSAL}, $\nu\geq 1$.
Hence the condition on $m_\sigma(S)$ is verified.
Moreover, there is $\lambda\in\mathbb C$ such that
\[
o\not\in \overline{\Xi^N_{\rho_\lambda}(S)\setminus \textrm{Sing}^N(S)}.
\]
Therefore it is enough to show that, for each $\lambda\in\mathbb C$,
\begin{equation}\label{MAGICEQUIVXY}
o\not\in \overline{\Xi^N_\rho(S)\setminus \textrm{Sing}^N(S)} ~ \hbox{ if and only if } ~
o\not\in \overline{\Xi^N_{\rho_\lambda}(S)\setminus \textrm{Sing}^N(S)}.
\end{equation}
Set $x_*=x, ~ y_*=y-\lambda z, ~ z_*=z, ~ x=t^d, ~ y=s^d, ~   y_*=s_*^d$,
\[
f(t,s)=t^{\mu d}s^{(\nu-1)d}\varphi(t,s).
\]
Assume that there is $\psi\in\mathbb C\{t,s_*\}$ such that
\begin{equation}\label{CHANGEPARXY1}
s=s_*(1+ t^{\mu d}s_*^{(\nu-1)d}\varphi(t,s_*)\psi(t,s_*)).
\end{equation}
There are $a_k\in\mathbb C^*$, $b_{\alpha,\beta}\in\mathbb C$, with $k\ge 1$, $\alpha,\beta\ge 0$ such that $b_{0,0}=0$,
\[
(1-u)^{1/d}=\sum_ka_ku^k
\qquad \hbox{and}\qquad
f(t,s)=\sum_{\alpha,\beta}b_{\alpha,\beta}t^\alpha s^\beta.
\]

Replacing (\ref{PUISEUXXY}) in $y_*=y-\lambda z$, we conclude that

\begin{equation}\label{SSTARS}
s_*=s(1+\sum_ka_k\lambda^kf^k(t,s)),
\end{equation}

Replacing (\ref{CHANGEPARXY1}) in (\ref{SSTARS}), we show that

\begin{equation}\label{CHANGEPARXY2}
\psi f+(1+\psi f) (1 + \sum_{k\geq 1} a_k \lambda^k  f^k(t,s_*(1+\psi f)))=0.
\end{equation}

There are $c_{\alpha,\beta}\in\mathbb C$, depending only on the $b_{\alpha,\beta}$'s, such that
\begin{equation}\label{CHANGEPARXY3}
f(t,s_*(1+\psi f(t,s_*))=f(t,s_*)[1+\sum_{\alpha,\beta\geq 0}\sum_{j=1}^{\beta}c_{\alpha\beta}t^{\alpha}s^{\beta}_*\psi^{j}f^{j-1}(t,s_*)].
\end{equation}

Setting
\begin{displaymath}
\varepsilon_{\psi}(t,s_*)=1+\sum_{\alpha,\beta\geq 0}\sum_{j=1}^{\beta}c_{\alpha\beta}t^{\alpha}s^{\beta}_*\psi^{j}f^{j-1}(t,s_*).
\end{displaymath}
we can rewrite equality (\ref{CHANGEPARXY2}) as 
\begin{equation}\label{CHANGEPARXY4}
\psi +(1+\psi f)\sum_{k\geq 1} a_k \lambda^k f^{k-1}\varepsilon_{\psi}^{k}=0,
\end{equation}

\noindent
Hence $\psi$ is well defined. Furthermore, $\psi(0,0)=\lambda / d$. Hence, for each $\lambda \in\mathbb C^{\ast}$, $S$ admits a fractional power series expansion $z_*= x_*^{\mu}y_*^{\nu}\phi(x_*^{1/d},y_*^{1/d})$ with $\phi(0,0)\neq 0$. Therefore 
$o\not\in \overline{\Xi^N_{\rho_\lambda}(S)\setminus \textrm{Sing}^N(S)}$.
\end{proof}

\section{A logarithmic version of a classical result}\label{LOGARITHMIC}

Let $M$ be a germ of a complex manifold of dimension $3$ at a point $o$.
Let $\pi:\widetilde M\to M$ be the blow up of $M$ at $o$.
Set $E=\pi^{-1}(o)$.
Let $\widehat\pi: \mathbb P^*\langle \widetilde M/E\rangle \to \mathbb P^*M$
be the blow up of $\mathbb P^*M$ along the Legendrian variety $\pi_M^{-1}(o)$.
If follows from Proposition 9.4 of \cite{ON1}  that 
diagram (\ref{FIRSTBU}) commutes.
We will also denote by $\widehat\pi$ the bimeromorphic map 
from a dense open set of 
$T^*\langle \widetilde M/E\rangle$ into $T^*M$ that induces $\widehat\pi$.

Let $\ell$ be a line of $T_oM$ that contains the origin.
Let $\Sigma_\ell$ be the set of planes of $T_oM$ that contain $\ell$.
Let $\gamma$ be the germ at $o$ of a smooth curve of $M$ with tangent space $\ell$.
The point $o_\ell$ where the strict transform of $\gamma$ intersects $E$ does not depend on $\gamma$.

\begin{theorem}\label{CLASSICAL}
Let $S$ be a surface of $M$. Then $\Sigma_o(S)$ is the union of the dual of the projectivization of $C_o(S)$ and a finite set of projective lines of $\mathbb P^*_oM$.

Moreover, $\Sigma_\ell\subset \Sigma_o(S)$ if and only if $\Sigma^E_{o_\ell}(\widetilde S)=\Sigma^E_{o_\ell}$.
\end{theorem}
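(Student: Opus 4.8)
The plan is to analyze the commutative diagram (\ref{FIRSTBU}) fiberwise over the point $o_\ell\in E$, translating the statement $\Sigma_\ell\subset\Sigma_o(S)$ into a statement about the logarithmic limit of tangents $\Sigma^E_{o_\ell}(\widetilde S)$. First I would recall that $\widetilde\Gamma:=\mathbb P^*_{\widetilde S}\langle\widetilde M/E\rangle$ is the strict transform of $\Gamma:=\mathbb P^*_SM$ by $\widehat\pi$, and that, by the remarks following Lemma \ref{RESIDUAL} together with the description of $\widetilde\Gamma'=\widetilde\Gamma\cap\pi_{\widetilde M}^{-1}(E)$ given in the introduction, we have $\widetilde\Gamma'=\mathbb P^*_{\widetilde S\cap E}E\cup\bigcup_i\mathbb P^*_{\sigma_i}E$ and $\Sigma_o(S)=\widehat\pi(\widetilde\Gamma')$, with the dual curve of $\widetilde S\cap E$ being the image of $\mathbb P^*_{\widetilde S\cap E}E$ (this is the ``dual of the projectivization of $C_o(S)$'' part, since $C_o(S)$ is the cone over $\widetilde S\cap E$) and the pencils $\Sigma_{\sigma_i}$ being the images of the fibers $\mathbb P^*_{\sigma_i}E$. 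This already gives the first assertion.

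For the equivalence, the key point is to compute $\widehat\pi^{-1}(\Sigma_\ell)\cap\pi_{\widetilde M}^{-1}(o_\ell)$ inside $\mathbb P^*\langle\widetilde M/E\rangle$. Choosing linear coordinates $(x_1,x_2,x_3)$ on $T_oM\cong M$ with $\ell=\{x_2=x_3=0\}$, so that in the standard chart of the blow up $o_\ell=(0,0,0)$ with $E=\{x_1=0\}$ and $x_2,x_3$ the fiber coordinates, one checks by a direct computation with the canonical forms $\theta=\xi_1dx_1+\xi_2dx_2+\xi_3dx_3$ and $\theta_E=\xi_1\frac{dx_1}{x_1}+\xi_2dx_2+\xi_3dx_3$ that $\widehat\pi$ is given by $(x_1,x_2,x_3;\xi_1:\xi_2:\xi_3)\mapsto(x_1,x_1x_2,x_1x_3;\ \ast)$ on the base and contracts the fiber of $\mathbb P^*E$ over $o_\ell$ — namely $\mathbb P^*_{o_\ell}E=\{x_1=0\}\cap\pi_{\widetilde M}^{-1}(o_\ell)$ — precisely onto $\Sigma_\ell$. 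Hence $\Sigma_\ell\subset\Sigma_o(S)=\widehat\pi(\widetilde\Gamma')$ if and only if $\widehat\pi$ restricted to $\widetilde\Gamma'$ hits the whole fiber $\mathbb P^*_{o_\ell}E$, which by the description of $\widetilde\Gamma'$ above and the fact that $\widehat\pi|_{\mathbb P^*_{o_\ell}E}$ is an isomorphism onto $\Sigma_\ell$, happens if and only if $\widetilde\Gamma'\supset\mathbb P^*_{o_\ell}E$. But $\widetilde\Gamma'\cap\pi_{\widetilde M}^{-1}(o_\ell)=\widetilde\Gamma\cap\pi_{\widetilde M}^{-1}(o_\ell)=\Sigma^E_{o_\ell}(\widetilde S)$ by definition of the logarithmic limit of tangents, and by Lemma \ref{CONTIDO} this is contained in $\Sigma^E_{o_\ell}=\mathbb P^*_{o_\ell}E$; so containing the full fiber is equivalent to $\Sigma^E_{o_\ell}(\widetilde S)=\Sigma^E_{o_\ell}$.

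To make this airtight one must handle the case $o_\ell\notin\widetilde S\cap E$ (then $\Sigma^E_{o_\ell}(\widetilde S)=\emptyset\neq\Sigma^E_{o_\ell}$ and simultaneously $\Sigma_\ell\not\subset\Sigma_o(S)$, consistently) and the case where $o_\ell$ is one of the $\sigma_i$ (then $\Sigma^E_{o_\ell}(\widetilde S)\supset\mathbb P^*_{\sigma_i}E=\Sigma^E_{o_\ell}$, and indeed $\Sigma_\ell=\Sigma_{\sigma_i}\subset\Sigma_o(S)$). The remaining case is $o_\ell\in\widetilde S\cap E$ a smooth point of $\widetilde S\cap E$ not among the $\sigma_i$: there $\Sigma^E_{o_\ell}(\widetilde S)$ is the single conormal direction of the curve $\widetilde S\cap E$ at $o_\ell$, a proper subset of $\Sigma^E_{o_\ell}$, matching the fact that $\widehat\pi$ of that single point is one tangent line in $\Sigma_\ell$, not all of $\Sigma_\ell$.

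The main obstacle I expect is the explicit verification, via the logarithmic forms, that $\widehat\pi$ indeed contracts $\mathbb P^*_{o_\ell}E$ isomorphically onto $\Sigma_\ell$ — i.e. pinning down the fiberwise behaviour of the map $\widehat\pi:\mathbb P^*E\to\mathbb P^*_oM$ and identifying $\widehat\pi(\mathbb P^*_{\sigma}E)$ with the pencil $\Sigma_\sigma$ — together with checking that the strict transform $\widetilde\Gamma$ of $\Gamma$ really does restrict over $E$ to the announced union (for which one invokes Lemma \ref{RESIDUAL}, the irreducibility/Legendrian structure, and the identity $\pi_{\widetilde M}(\widetilde\Gamma')=\widetilde S\cap E$). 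Everything else is bookkeeping with the diagram and Lemma \ref{CONTIDO}.
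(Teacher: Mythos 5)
Your proposal follows essentially the same route as the paper's proof: the explicit computation of $\widehat\pi^*$ in the chart centered at $o_\ell$ giving $\widehat\pi(\mathbb P^*_{o_\ell}E)=\Sigma_\ell$, the decomposition of $\mathbb P^*_{\widetilde S}\langle\widetilde M/E\rangle\cap\pi_{\widetilde M}^{-1}(E)$ into $\mathbb P^*_{\widetilde S\cap E}E$ plus finitely many fibers via Lemma \ref{RESIDUAL} and the Legendrian structure, and the identification of the images (dual of Proj$(C_o(S))$ and pencils $\Sigma_{\ell_i}$). The argument is correct, and your closing case analysis is the same bookkeeping the paper leaves implicit when it concludes from (\ref{FITS}).
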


\begin{proof}
Let $\ell \in \mathbb P(T_oM)$.
Choose local coordinates $(x,y,z)$ of $M$ such that $\ell=\{y=z=0\}$.
Setting $x_0=x,y_0=y/x,z_0=z/x$, $(x_0,y_0,z_0)$ is a system of local coordinates on an affine set $U$ of $\widetilde M$, centered at $o_\ell$ such that $E\cap U=\{x_0=0\}$. Let
\[
\xi_0\frac{dx_0}{x_0} ~ + ~ \eta_0 dy_0 ~ + ~ \zeta_0 dz_0
\]
be the canonical $1$-form of $T^*\langle \widetilde M/E \rangle$
in a neighbourhood of $o_\ell$. Since 
\[
\widehat\pi^*\left(  
\xi dx+\eta dy + \zeta dz
\right)
~ = ~
(x\xi+y\eta+z\zeta)\frac{dx_0}{x_0}+x\eta dy_0+x\zeta dz_0,
\]
in a neighbourhood of $o_\ell$, $\widehat{\pi}$ induces a map $\widehat{\pi}_\ell:\mathbb P_{o_\ell}^*\langle \widetilde M/E\rangle \to \mathbb P_o^*M$ given by 
\[
\widehat\pi_\ell (\xi_0:\eta_0:\zeta_0)=(\xi_0:\eta_0:\zeta_0).
\]
Therefore 
\begin{equation}\label{FITS}
\widehat\pi(\mathbb P^*_{o_\ell}E)=\Sigma_\ell.
\end{equation}
Since 
$
\widehat \pi
\left(
\mathbb P^*_{\widetilde S}\langle \widetilde M/ E \rangle
\right)
 ~ = ~ \mathbb P^*_SM
$,
\[
\widehat \pi
\left(
\mathbb P^*_{\widetilde S}\langle \widetilde M/ E \rangle
\cap \mathbb P^* E\right)
 ~ = ~ \Sigma_o(S).
\]
Since
$\mathbb P^*_{\widetilde S} \langle \widetilde M/ E \rangle
\cap \pi^{-1}_{\widetilde M} (E) = \mathbb P^*_{\widetilde S}\langle M/ E \rangle
\cap \mathbb P^* E$ is a Legendrian variety of $\mathbb P^* E$ and  
\[
\pi_{\widetilde M}
\left(
\mathbb P_{\widetilde S}\langle \widetilde M/ E \rangle
\cap \mathbb P^* E\right)
 ~ = ~
 \widetilde S\cap E,
\]
 there are $\ell_1,...,\ell_k\in \mathbb P\left( T_oM \right)
 $ 
 such that
\[
\mathbb P^*_{\widetilde S}\langle \widetilde M/ E \rangle \cap \mathbb P^* E
~ = ~
\mathbb P^* _{\widetilde S\cap E}E
~ \cup ~ \cup_{i=1}^k\mathbb P^*_{o_{\ell_i}}E.
\]
Since $\widetilde S \cap E \simeq$ Proj$(C_o(S))$, 
$\widehat \pi
\left(
\mathbb P^*( \widetilde S\cap E) \right)
$
equals the dual of Proj$(C_o(S))$. The Theorem follows from (\ref{FITS}).
\end{proof}

\section{The First Sequence of Blow-Ups}\label{FSB}

Let $M$ be a germ of a complex manifold of dimension $3$ at a point $o$.
Let $\rho$ be a submersion of $M$ into a smooth surface $X$.
We will fix systems of local coordinates $(x,y,z)$, $[(x,y)]$ on $M$ $[X]$ such that
 $o=(0,0,0)$ and $\rho(x,y,z)=(x,y)$. Let
\begin{equation}\label{THETAXY}
 \xi\frac{dx}{x}+\eta \frac{dy}{y}+ \zeta dz
\end{equation}
be the canonical $1$-form $\theta_N$ of $T^*\langle M/N\rangle$.
 Given $(a:b)\in \mathbb P^1$,
 set
 \[
\Sigma^N_o(a:b)=\{ (\xi : \eta : \zeta ) \in \mathbb{P}^*_o\langle M/N \rangle ~ : ~  b\xi + a\eta=0  \}.
 \]
Let $\pi:\widetilde M \to M$ be the blow up of $M$ along the singular locus $N^\sigma$ of $N$. 
Set $\widetilde N=\pi^{-1} (N)$. 
Let $\widehat{\pi} : T^*\langle \widetilde  M/ \widetilde   N \rangle \to  T^* \langle M/N \rangle $
 be the blow up of $ T^* \langle M/N \rangle $  along $\pi_M^{-1}(N^\sigma )$.
There is a commutative diagram

 \begin{equation}\label{BUT}
\begin{array}{ccccc}
  T^*\langle M/N\rangle  & \leftarrow &  T^*\langle \widetilde
M/\widetilde N \rangle  \\
  \downarrow &  &   \downarrow \\
  M  & \leftarrow &   \widetilde M
\end{array}
\end{equation}

Set $x_0=x$, $y_0=y/x$ and $z_0=z$. 

\begin{lemma}\label{BLOWLIMIT}
 Let $o_1\in \pi^{-1}(o)$. 
 The following statements hold:

\begin{enumerate}[	$~~~~~(a)$]
\item
The map $\widehat \pi$ induces a linear isomorphism $\widehat \pi ~ : ~ T^*_{o_1}\langle \widetilde M / \widetilde N \rangle ~ \to ~ T^*_o \langle M/N \rangle$ given by
\begin{equation}\label{LINEARISO}
( \xi_0,\eta_0,\zeta_0) \mapsto (\xi_0-\eta_0,\eta_0,\zeta_0).
\end{equation}

\item 
If $o_1=(0,0,0)$,
\begin{equation}\label{LINEARSUM}
\widehat{\pi }
\left( \Sigma^{\widetilde{N}}_{o_1} (a:b)\right) ~ =  ~\Sigma^N_o(a+b:b).
\end{equation}

 \item 

If $o_1=(0,\mu,0)$, with $\mu\not=0$,
\begin{equation}\label{LINEARP1}
\widehat{\pi}
\left( \Sigma^{\widetilde N}_{o_1}\right)
=\Sigma^N_o(1:1).
\end{equation}

\item

Assume $(a:b)\not=(1:1)$, $o_1=(0,0,0)$ and $o_2$ is the other point of $\pi^{-1}(o)$ such that $\widetilde N$ is singular at $o_2$.
Then
$
\Sigma_o^N(S)\supset \Sigma_o^N(a:b)
$
if and only if
\[
\Sigma^{\widetilde N}_{o_1}(\widetilde S) \supset \Sigma^{\widetilde N}_{o_1}(a-b,b)
\qquad \hbox{or} \qquad
\Sigma^{\widetilde N}_{o_2}(\widetilde S) \supset \Sigma^{\widetilde N}_{o_2}(a,b-a).
\]

\end{enumerate}

\end{lemma}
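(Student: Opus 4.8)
The plan is to reduce part $(d)$ to the local isomorphism of part $(a)$, taking care of the fact that the blow-up $\pi$ of $M$ along $N^\sigma$ introduces two relevant chart centres over $o$, namely $o_1=(0,0,0)$ and $o_2$, each a normal crossings point of $\widetilde N$. First I would note that $\Sigma_o^N(S)$ is a Legendrian curve in $\mathbb P^*_o\langle M/N\rangle$ and, by the commutativity of diagram (\ref{BUT}) together with the fact that $\widehat\pi$ carries $\mathbb P^*_{\widetilde S}\langle\widetilde M/\widetilde N\rangle$ onto $\mathbb P^*_S\langle M/N\rangle$, we have
\begin{equation}\label{PUSHFWD}
\widehat\pi\bigl(\mathbb P^*_{\widetilde S}\langle\widetilde M/\widetilde N\rangle\cap\pi_{\widetilde M}^{-1}(\pi^{-1}(o))\bigr)=\Sigma_o^N(S).
\end{equation}
The left-hand side is a union over the finitely many points $o'\in\pi^{-1}(o)$ of the images $\widehat\pi(\Sigma^{\widetilde N}_{o'}(\widetilde S))$. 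Since $(a:b)\neq(1:1)$, part $(c)$ shows that the free point $o_1'=(0,\mu,0)$ with $\mu\neq 0$ contributes only $\Sigma_o^N(1:1)$, which does not contain the line $\Sigma_o^N(a:b)$; hence that point is irrelevant and only $o_1$ and $o_2$ can contribute the line $\Sigma_o^N(a:b)$.

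Next I would make the two linear isomorphisms explicit. At $o_1=(0,0,0)$, formula (\ref{LINEARISO}) gives $\widehat\pi(\xi_0,\eta_0,\zeta_0)=(\xi_0-\eta_0,\eta_0,\zeta_0)$, so $\widehat\pi$ maps the line $b\xi_0+a\eta_0=0$ — i.e.\ $\Sigma^{\widetilde N}_{o_1}(a-b,b)$ in the notation where $\Sigma^{\widetilde N}_{o_1}(a':b')=\{b'\xi_0+a'\eta_0=0\}$ — onto the line $b\xi+(a)\eta=0$... more precisely one checks directly, as in the derivation of (\ref{LINEARSUM}), that the preimage of $\Sigma_o^N(a:b)$ under (\ref{LINEARISO}) is $\Sigma^{\widetilde N}_{o_1}(a-b,b)$. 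For the second centre $o_2$, the roles of the blown-up coordinate are permuted: with the chart $x_1=x/y,\ y_1=y,\ z_1=z$ the analogue of (\ref{LINEARISO}) is $(\xi_1,\eta_1,\zeta_1)\mapsto(\xi_1,\eta_1-\xi_1,\zeta_1)$, whence the preimage of $\Sigma_o^N(a:b)$ is the line $b\xi_1+(a)\eta_1=0$ rewritten after the substitution as $\Sigma^{\widetilde N}_{o_2}(a,b-a)$. Thus
\[
\widehat\pi^{-1}\bigl(\Sigma_o^N(a:b)\bigr)\cap\pi_{\widetilde M}^{-1}(o_1)=\Sigma^{\widetilde N}_{o_1}(a-b,b),\qquad
\widehat\pi^{-1}\bigl(\Sigma_o^N(a:b)\bigr)\cap\pi_{\widetilde M}^{-1}(o_2)=\Sigma^{\widetilde N}_{o_2}(a,b-a).
\]

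Finally I would combine these. Because $\widehat\pi$ restricted to each fibre $\mathbb P^*_{o_i}\langle\widetilde M/\widetilde N\rangle$ is a linear isomorphism onto $\mathbb P^*_o\langle M/N\rangle$, the line $\Sigma_o^N(a:b)$ is contained in the left side of (\ref{PUSHFWD}) if and only if it is contained in $\widehat\pi(\Sigma^{\widetilde N}_{o_1}(\widetilde S))$ or in $\widehat\pi(\Sigma^{\widetilde N}_{o_2}(\widetilde S))$ (the point $o_1'$ being excluded as above, and $\widetilde S\cap\pi^{-1}(o)$ meeting $\widetilde N^\sigma$ only in $\{o_1,o_2\}$); equivalently, applying the inverse isomorphisms, $\Sigma^{\widetilde N}_{o_1}(a-b,b)\subset\Sigma^{\widetilde N}_{o_1}(\widetilde S)$ or $\Sigma^{\widetilde N}_{o_2}(a,b-a)\subset\Sigma^{\widetilde N}_{o_2}(\widetilde S)$. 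This is exactly the asserted equivalence. The main obstacle I anticipate is the bookkeeping in the second chart: one must verify that, after the coordinate change adapted to $o_2$, the canonical logarithmic $1$-form (\ref{THETAXY}) transforms so that the relevant line is indeed $\Sigma^{\widetilde N}_{o_2}(a,b-a)$ and not some other normalization, and that $o_2$ really is a normal crossings point where the analogue of part $(a)$ applies verbatim; this is a finite computation parallel to the one already carried out for $o_1$, but the index conventions must be tracked carefully.
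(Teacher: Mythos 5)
Your argument for part $(d)$ is correct and is essentially the paper's own proof: the paper likewise deduces $(d)$ from the containment $\Sigma^N_o(S)\subset\cup_{s\in\widetilde S\cap\pi^{-1}(o)}\widehat\pi\left(\Sigma^{\widetilde N}_s(\widetilde S)\right)$ together with statements $(b)$ and $(c)$ (the smooth points of $\widetilde N$ being discarded because they only contribute inside $\Sigma^N_o(1:1)\neq\Sigma^N_o(a:b)$), and your explicit chart computation at $o_2$ is just the symmetric analogue of $(a)$. Note that you treat only $(d)$, taking $(a)$--$(c)$ for granted; in the paper these follow from the one-line pullback $\widehat\pi^*\theta_N=(\xi+\eta)\frac{dx_0}{x_0}+\eta\frac{dy_0}{y_0}+\zeta\, dz_0$, which your fibrewise formulas at $o_1$ and $o_2$ implicitly reproduce.
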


\begin{proof}
Statement $(a)$ follows from the fact that
\[
\widehat{\pi}^* \theta_N=(\xi+\eta)\frac{dx_0}{x_0}+\eta\frac{dy_0}{y_0}+\zeta  dz_0.
\]
in a neighbourhood of $o_1$.
Statements (b) and (c) follow from statement (a).
Since $\mathbb P^*_S\langle M/ N \rangle \subset 
\widehat{\pi}\left( \mathbb P^*_{\widetilde S}\langle \widetilde M/\widetilde N \rangle \right)$,
\[
\mathbb P^*_S\langle M/ N \rangle 
\cap \pi_M^{-1}(o)
\subset 
\widehat{\pi}\left( 
\mathbb P^*_{\widetilde S}\langle \widetilde M/\widetilde N \rangle 
\cap \pi_{\widetilde M}^{-1}(\pi^{-1}(o))
\right).  
\]
Therefore
\begin{equation}\label{CONTAINED}
\Sigma^N_o(S) \subset  \cup_{s\in \widetilde S\cap \pi^{-1}(o)}\widehat{\pi}\left( \Sigma^{\widetilde N}_s(\widetilde S)\right).
\end{equation}
Statement (d) follows from (\ref{CONTAINED}) and statements (b) and (c).
\end{proof}

\begin{blank}\label{PRIMEIRA}\em
Set $M_0=M,~N_0=N,~X_0=X,~S_0=S,~\rho_0=\rho$.

Let $\rho_\ell:M_\ell\to X_\ell$ be a holomorphic submersion,
Let $N_\ell$ be a normal crossings divisor of $M_\ell$.
Let $S_\ell$ be a surface of $M_\ell$.

Let $D_\ell$ be intersection of the closure of $\Delta_{\rho_\ell}(S_\ell)\setminus \rho(N_\ell)$ and $\rho(N_\ell^\sigma)$.

Let $\tau_{\ell+1}:X_{\ell+1}\to X_\ell$ be the blow up of $X_\ell$ 
along  $D_\ell$.
Let $\pi_{\ell+1}:M_{\ell+1}\to M_\ell$ be the blow up of $M_\ell$ along $\rho_\ell^{-1}(D_\ell)$.
Let $S_{\ell+1}$ be the strict transform of $S_\ell$ by $\pi_{\ell+1}$.
By the universal property of the blow-up there is a map $\rho_{\ell+1}:M_{\ell+1}\to X_{\ell+1}$
such that $\rho_\ell\pi_{\ell+1}=\tau_{\ell+1} \rho_{\ell+1}$.
Moreover, $\rho_{\ell+1}$ is a submersion.
Hence we can iterate the process.
\end{blank}

There is an integer $L$ such that $D_L=\emptyset$. 
Hence the procedure described in paragraph \ref{PRIMEIRA} will terminate.
Set $\pi=\pi_1\circ\cdots\circ\pi_L$. 
We call the map $\pi:M_L\to M$ the \em first sequence of blow-ups. \em

\begin{theorem}\label{SIGMAAB}
Assume $N$ has two irreducible components and $S\cap N^{\sigma}=\{o\}$.
 There are positive integers $a_1,...,a_n,b_1,...,b_n$ such that
\[
\Sigma^N_o (S) \subset \cup^n_{i=1}\Sigma^N_o(a_i:b_i).
\]
Moreover, $\Sigma^N_o (S)$ contains a projective line if and only if there is a regular point $o_1$ of $N_L$ such that
$\Sigma^{N_L}_{o_1} (S_L)=\Sigma^{N_L}_{o_1}$.
\end{theorem}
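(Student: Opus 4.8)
The plan is to reduce Theorem \ref{SIGMAAB} to an iterated application of Lemma \ref{BLOWLIMIT}, running along the first sequence of blow-ups $\pi=\pi_1\circ\cdots\circ\pi_L$ of paragraph \ref{PRIMEIRA}. First I would establish the inclusion $\Sigma^N_o(S)\subset\cup_{i=1}^n\Sigma^N_o(a_i:b_i)$. The point here is that $o$ lies on the singular locus $N^\sigma$ of the normal crossings divisor $N$ (which, since $N$ has two components and $S\cap N^\sigma=\{o\}$, is the coordinate line through $o$), and the conormal $\mathbb P^*_S\langle M/N\rangle$ restricted to the fiber over $o$ is a Legendrian subvariety of $\mathbb P^*_o\langle M/N\rangle$; its image under $\pi_M$ is the single point $o$, so it is a union of finitely many points of a fiber $\mathbb P^1$. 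The subvarieties $\Sigma^N_o(a:b)$ are exactly the points of this $\mathbb P^1$ cut out by the linear equation $b\xi+a\eta=0$ in the coordinates (\ref{THETAXY}). Concretely, apply (\ref{CONTAINED}) from the proof of Lemma \ref{BLOWLIMIT} once: $\Sigma^N_o(S)\subset\cup_{s}\widehat\pi(\Sigma^{\widetilde N}_s(\widetilde S))$, and each $\widehat\pi(\Sigma^{\widetilde N}_s(\widetilde S))$ is, by parts (b) and (c) of the Lemma, either a single point $\Sigma^N_o(1:1)$ (when $s$ is a regular point of $\widetilde N$ away from the two distinguished points) or is governed linearly by the behaviour at the two points $o_1,o_2$ where $\widetilde N$ stays singular. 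Iterating down the tower $M_L\to\cdots\to M_0=M$, at each stage the finitely many relevant points produce finitely many linear conditions $\Sigma^N_o(a_i:b_i)$, pulled back through the composition of the linear isomorphisms (\ref{LINEARISO}); termination of the sequence ($D_L=\emptyset$) guarantees the collection is finite. Positivity of the $a_i,b_i$ should follow from tracking the substitutions $a\mapsto a+b$, $a\mapsto a-b$ etc. in (\ref{LINEARSUM})–(\ref{LINEARP1}) along admissible chains, together with the fact that the cone $C_o(S)$ meets the relevant charts in the positive quadrant; I would keep this bookkeeping light and cite Lemma \ref{BLOWLIMIT}.

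For the second assertion, the forward direction is the easier one: if $\Sigma^N_o(S)$ contains a projective line $\Sigma^N_o(a:b)$, I would show by descending induction along $\pi_1,\ldots,\pi_L$ that the containment propagates. Statement (d) of Lemma \ref{BLOWLIMIT} is exactly the inductive step: $\Sigma^N_o(S)\supset\Sigma^N_o(a:b)$ forces, for one of the two distinguished points $o_j$ of the exceptional fiber, $\Sigma^{\widetilde N}_{o_j}(\widetilde S)\supset\Sigma^{\widetilde N}_{o_j}(a',b')$ with $(a',b')$ the appropriate linear image; if $(a':b')=(1:1)$ we land at a regular point of the next divisor by (c), otherwise we continue. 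Because the sequence terminates after $L$ steps — here I need that a projective line cannot survive as a full fiber over a point that remains a singular normal crossings point forever, which is where $D_L=\emptyset$ is used, since once the discriminant no longer meets $\rho(N^\sigma)$ the relevant point has become either regular or no longer on $S_L$ — the induction must bottom out at a regular point $o_1$ of $N_L$ with $\Sigma^{N_L}_{o_1}(S_L)=\Sigma^{N_L}_{o_1}$. For the converse, if such a regular point $o_1$ of $N_L$ exists with $\Sigma^{N_L}_{o_1}(S_L)=\Sigma^{N_L}_{o_1}$, I would push the full fiber $\mathbb P^*_{o_1}N_L$ forward through $\pi_L,\ldots,\pi_1$: each $\widehat\pi_{\ell}$ maps $\Sigma^{N_\ell}_{o}$ or $\Sigma^{N_\ell}_{o}(a:b)$ onto some $\Sigma^{N_{\ell-1}}_{o}(a':b')$ by (\ref{LINEARSUM})–(\ref{LINEARP1}), and since $\widehat\pi_\ell(\mathbb P^*_{\widetilde S}\langle\widetilde M/\widetilde N\rangle)=\mathbb P^*_S\langle M/N\rangle$, the image $\Sigma^N_o(a:b)$ (a projective line) lies inside $\Sigma^N_o(S)$.

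The main obstacle I anticipate is the bottoming-out argument in the forward direction: showing rigorously that the inductive descent dictated by Lemma \ref{BLOWLIMIT}(d) actually \emph{reaches} a regular point of $N_L$ rather than looping among singular points of the $N_\ell$. This requires relating the combinatorial termination of paragraph \ref{PRIMEIRA} (driven by $D_\ell$, built from the discriminant $\Delta_{\rho_\ell}(S_\ell)$) to the geometry of $\Sigma^{N_\ell}_{o}(S_\ell)$ — essentially, that as long as a distinguished point $o_j$ stays a singular point of $N_\ell$ lying on $S_\ell$, the discriminant condition forces another blow-up, so the process genuinely moves and cannot stabilize until the fiber-as-line phenomenon has been transported to a regular point. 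Concretely I would argue: if $o_j\in S_\ell$ is a singular point of $N_\ell$ and $\Sigma^{N_\ell}_{o_j}(S_\ell)$ contains a line, then $o_j\in D_\ell$ (the discriminant must meet $\rho(N_\ell^\sigma)$ there, by a local-coordinate computation analogous to Lemma \ref{TRIVIAL}), so $\pi_{\ell+1}$ blows it up; since $D_L=\emptyset$ this cannot persist. The other, more routine difficulty is the linear-algebra bookkeeping: the composite of the isomorphisms (\ref{LINEARISO}) along a chain of length $L$ must be tracked carefully to identify the final $(a_i:b_i)$ and to confirm positivity, but this is mechanical given Lemma \ref{BLOWLIMIT}.
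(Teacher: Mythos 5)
Your proposal is correct and follows essentially the paper's own route: both argue by iterating Lemma \ref{BLOWLIMIT} along the first sequence of blow-ups of paragraph \ref{PRIMEIRA} (the inclusion via (\ref{CONTAINED}) together with statements $(b)$,$(c)$, the ``moreover'' via statement $(d)$), with the bottoming-out controlled by $D_L=\emptyset$, which forces $\Delta_{\rho_L}(S_L)\subset\rho_L(N_L)$ at singular points of $N_L$ so that Lemma \ref{TRIVIAL} yields the single point $(0:0:1)$ there. Two small inaccuracies in your wording do not change the argument: termination does not make the distinguished point regular or remove it from $S_L$ (points of $S_L\cap N_L^\sigma$ may persist, and the correct mechanism is precisely the Lemma \ref{TRIVIAL} computation you invoke afterwards, which is what the paper uses), and $\Sigma^N_o(1:1)$ is a projective line of $\mathbb P^*_o\langle M/N\rangle$ rather than a point, though for the containment in a finite union of lines this is immaterial.
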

\begin{proof}
Let $\sigma\in S_{L}$. 
If $\sigma$ is a singular point of $N_L$,
\[
\Delta_{\rho_L}(S_L)=\rho_L(N_L).
\]
Assuming $N_L=\{xy=0\}$ in a neighbourhood of $\sigma$, 
it follows from Lemma \ref{TRIVIAL} that $\Sigma^{N_L}_{\sigma}=\{(0:0:1\}$.\\
Assume $L=1$. It follows from statements $(a)$ and $(c)$ of Lemma \ref{BLOWLIMIT} that 
$\Sigma_o^N(S)\subset \Sigma_o^N(1:1)$.

The induction step follows from statement $(d)$ of Lemma \ref{BLOWLIMIT}.
\end{proof}

\section{The Non Degenerated Case}\label{NDC}

Let $N$ be a smooth divisor of a manifold $M$ of dimension $3$.
Let $S$ be a surface of $M$ that does not contain $N$.
Let $o\in S\cap N$.
Let $\pi^0:M_0\to M$ be the blow up of $M$ with center $o$.
Let $\widetilde N$ be the strict transform of $N$.
Let $S_0$ denote the strict transform of $S$.
Set $N_0=(\pi^0)^{-1}(N)$, $E=(\pi^0)^{-1}(o)$.

We say that $S$ is \em non degenerated \em at $o$ if $C_{o}(S)$ does not contain $C_o(N)$.

\begin{lemma}\label{PCIRC}
There is an open set $\mathbb P^\circ \langle M_0/N_0\rangle$ of 
$\mathbb P^* \langle M_0/N_0\rangle$ 
and an holomorphic map 
$\pi_0:\mathbb P^\circ \langle M_0/N_0\rangle \to \mathbb P^* \langle M/N\rangle$
such that the diagram 
\begin{equation}\label{LOGBU}  
\begin{array}{ccccc}
  \mathbb{P}^*\langle M/N\rangle  &  \overset{\pi_0}{\longleftarrow} &  \mathbb{P}^\circ\langle 
M_0 / N_0 \rangle  \\
 \pi_M \downarrow \qquad &  &   \qquad \downarrow \pi_{M_0} \\
  M  & \overset{\pi^0}{\longleftarrow} &   M_0
\end{array}
\end{equation}
commutes and
\begin{enumerate}[$(a)$]
\item
$\mathbb P^* (M\setminus N)\hookrightarrow \mathbb P^\circ \langle M_0/N_0\rangle$,
\item
$\pi_0|_{\mathbb P^*(M_0\setminus E)}: 
\mathbb P^*(M_0\setminus E) \to {\mathbb P^*(M\setminus \{o\}})$ 
is a contact transformation,
\item
for each Legendrian surface $\Gamma$ of $\mathbb P^* \langle M/N\rangle$, $\Gamma\subset \pi_0(\mathbb P^\circ \langle M_0/N_0\rangle)$,
\item
for each Legendrian surface $\Gamma_0$ of $\mathbb P^* \langle M_0/N_0\rangle$, 
$\Gamma_0\subset \mathbb P^\circ \langle M_0/N_0\rangle$.
\end{enumerate}
\end{lemma}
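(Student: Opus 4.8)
The plan is to construct $\mathbb P^\circ\langle M_0/N_0\rangle$ explicitly in local coordinates adapted to the blow up, and then verify each listed property by direct inspection of the transition formulas for the canonical logarithmic $1$-form. First I would set up coordinates: choose $(x,y,z)$ centered at $o$ with $N=\{x=0\}$. The blow up $\pi^0:M_0\to M$ of $o$ is covered by three charts; the only delicate one is the chart where $x_0=x$, $y_0=y/x$, $z_0=z/x$, in which $E=\{x_0=0\}$ and, crucially, the strict transform $\widetilde N=\{x_0=0, \text{or rather}\}$ — one checks $N_0=(\pi^0)^{-1}(N)=\{x\circ\pi^0=0\}=\{x_0=0\}=E$ in this chart, while in the chart $x_1=x/y$, $y_1=y$, $z_1=z/y$ one has $N_0=\{x_1 y_1=0\}$. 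Since $\Omega^1_{M}\langle N\rangle$ is free on $dx/x,\,dy,\,dz$, I would compute $(\pi^0)^*$ of each generator and express the result in the $\mathcal O_{M_0}$-module $\Omega^1_{M_0}\langle N_0\rangle$. In the first chart $(\pi^0)^*(dx/x)=dx_0/x_0$, $(\pi^0)^*dy=x_0\,dy_0+y_0\,dx_0 = x_0(dy_0+y_0\,dx_0/x_0)$, $(\pi^0)^*dz=x_0\,dz_0+z_0\,dx_0=x_0(dz_0+z_0\,dx_0/x_0)$; this exhibits $(\pi^0)^*\Omega^1_M\langle N\rangle$ as the $\mathcal O_{M_0}$-submodule of $\Omega^1_{M_0}\langle N_0\rangle$ generated by $dx_0/x_0$, $x_0\,dy_0$, $x_0\,dz_0$, which defines a morphism of vector bundles $T^*\langle M_0/N_0\rangle\dashrightarrow (\pi^0)^*T^*\langle M/N\rangle$ that is an isomorphism away from $E$. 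Dually this gives the rational map on projectivized bundles; I would define $\mathbb P^\circ\langle M_0/N_0\rangle$ to be its domain of definition, i.e.\ the locus where it does not contract, together with the corresponding open set in every other chart, and set $\pi_0$ to be the induced map into $\mathbb P^*\langle M/N\rangle$ via diagram (\ref{FIRSTBU})-style bookkeeping.

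With this in hand, diagram (\ref{LOGBU}) commutes by construction since $\pi_0$ is built from $(\pi^0)^*$. For (a): over $M\setminus N$ both logarithmic cotangent bundles are ordinary cotangent bundles, $\pi^0$ is an isomorphism there, and the comparison morphism above is an isomorphism over $M_0\setminus E$ which contains $(\pi^0)^{-1}(M\setminus N)$; hence $\mathbb P^*(M\setminus N)\hookrightarrow\mathbb P^\circ\langle M_0/N_0\rangle$. For (b): away from $E$, $\pi_0$ restricts to the projectivization of the isomorphism $T^*(M_0\setminus E)\cong (\pi^0)^*T^*(M\setminus\{o\})$ induced by the diffeomorphism $\pi^0$, and this is automatically a contact transformation because it is the projectivized cotangent lift of a biholomorphism; I would quote the standard fact (or the classical specialization of Proposition 9.4 of \cite{ON1}) that the cotangent lift of a biholomorphism is symplectic, hence contact after projectivization.

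The substantive points are (c) and (d). For (c): let $\Gamma=\mathbb P^*_T\langle M/N\rangle$ be an irreducible Legendrian surface, $T=\pi_M(\Gamma)$, a surface with $T=\overline{T\setminus N}$. I must show $\Gamma$ avoids the indeterminacy/contracted locus of $\pi_0$, equivalently that the pullback section of $\mathbb P\big((\pi^0)^*T^*\langle M/N\rangle\big)$ along the strict transform $\widetilde T\to M_0$ actually lands in $\mathbb P^\circ$. The key computation: if $T$ has a fractional power expansion near $o$, its conormal is spanned by a logarithmic form $\xi\,dx/x+\eta\,dy+\zeta\,dz$ whose pullback, by the formulas above, is $\xi\,dx_0/x_0 + x_0\eta(dy_0+y_0 dx_0/x_0)+x_0\zeta(dz_0+z_0 dx_0/x_0) = (\xi+y_0\eta+z_0\zeta)\,dx_0/x_0 + x_0\eta\,dy_0+x_0\zeta\,dz_0$; one checks that after dividing by the appropriate power of $x_0$ (governed by the multiplicity $m_o(T)$ and the pole order), the coefficients do not all vanish identically on $\widetilde T\cap E$ — this is where the hypothesis $T=\overline{T\setminus N}$ and the fact that $\Gamma$ is the \emph{closure} of the conormal over $T\setminus N$ is used, ruling out that $\Gamma$ be supported on a contracted fiber. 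For (d): an irreducible Legendrian surface $\Gamma_0$ of $\mathbb P^*\langle M_0/N_0\rangle$ equals $\mathbb P^*_{T_0}\langle M_0/N_0\rangle$ for $T_0=\pi_{M_0}(\Gamma_0)$, and I must show $\Gamma_0\subset\mathbb P^\circ$, i.e.\ that $\Gamma_0$ meets neither the locus where the comparison morphism degenerates; since $\Gamma_0$ is the closure of its restriction to $\mathbb P^*(M_0\setminus E)$ and the degeneracy locus lies over $E$ with fibers that are (by the coordinate formula) lower-dimensional linear subspaces transverse to generic conormal directions, a dimension count shows a $2$-dimensional irreducible Legendrian cannot be contained in it. I expect (d) — more precisely, pinning down exactly what the ``bad locus'' of $\pi_0$ is in every chart and showing no Legendrian surface hits it — to be the main obstacle, because it requires a careful chart-by-chart description of where the morphism $T^*\langle M_0/N_0\rangle\to(\pi^0)^*T^*\langle M/N\rangle$ drops rank and a transversality argument against the Legendrian condition; the remaining items are essentially formal consequences of the pullback formula for $\theta_N$.
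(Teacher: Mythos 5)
Your construction of the map and the verification of the commutative diagram, (a) and (b) match the paper's: the paper also writes the blow up in the three affine charts, pulls back $\xi\,dx/x+\eta\,dy+\zeta\,dz$, and defines $\pi_0$ chart by chart, with $\mathbb P^\circ$ the complement of explicit ``bad'' sets such as $B_1=\{x_1=\eta_1=\zeta_1=0\}$, $B_2=\{x_2=y_2=\eta_2-\xi_2=\zeta_2=0\}$ and $B'_2=\{y_2=\eta_2-x_2\xi'_2=\zeta_2=0,\ x_2\neq0\}$. The problem is your treatment of (d), which you yourself identify as the main point. Statement (d) requires that a Legendrian surface $\Gamma_0$ of $\mathbb P^*\langle M_0/N_0\rangle$ be \emph{disjoint} from the indeterminacy locus, and you state this correctly (``meets neither the locus''), but the argument you then offer --- that $\Gamma_0$ is the closure of its part over $M_0\setminus E$ and ``a dimension count shows a $2$-dimensional irreducible Legendrian cannot be \emph{contained} in it'' --- only rules out containment, not intersection. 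No dimension count or general-position argument can do the job: the bad locus is a fixed $2$-dimensional subset of a $5$-dimensional ambient space, and a fixed $2$-dimensional Legendrian can perfectly well meet it; if it does, $\Gamma_0\not\subset\mathbb P^\circ\langle M_0/N_0\rangle$ and (d) fails for that $\Gamma_0$. So the proposal is missing the actual mechanism that forces avoidance.

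That mechanism in the paper is structural, not transversality. Over the smooth points of $N_0$, Lemma \ref{RESIDUAL} says that $\Gamma_0\cap\pi_{M_0}^{-1}(N_0)$ lies in $\mathbb P^*N_0$, i.e.\ the residue coordinate (the coefficient of the logarithmic term, $\xi_1$ in the first chart) vanishes on $\Gamma_0$ over the divisor; since on $B_1$ one has $(\xi_1:\eta_1:\zeta_1)=(1:0:0)$, and similarly on $B'_2$ the relevant residue is forced to be nonzero, these pieces of the bad locus are automatically missed. Over the normal crossings point $E\cap\widetilde N$ (the locus containing $B_2$), the residual lemma for a smooth divisor does not apply, and the paper instead invokes Theorem \ref{SIGMAAB} (the first sequence of blow-ups) to show that the fiber of $\Gamma_0$ there is contained in finitely many lines $\Sigma(a_i:b_i)$ with $a_i,b_i>0$, which excludes the point $(\xi_2:\eta_2:\zeta_2)=(1:1:0)$ defining $B_2$. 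Neither of these inputs appears in your proposal, and your sketch of (c) inherits the same difficulty, since the image of $\pi_0$ over $o$ is only the line $\{\xi=0\}$ in $\mathbb P^*_o\langle M/N\rangle$, so one again needs the residue property (Lemma \ref{RESIDUAL}/Lemma \ref{CONTIDO}) rather than a genericity argument to see that a Legendrian $\Gamma$ lands inside $\pi_0(\mathbb P^\circ\langle M_0/N_0\rangle)$.
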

\begin{proof}
Assume $M$ is an affine set with coordinates $(x,y,z)$ and  $N=\{x=0\}$.  
The manifold $M_0$ is the gluing of the open affine sets $V_i$, $i=1,2,3$,  with coordinates $(x_i,y_i,z_i)$ such that
\begin{enumerate}
\item
$x_1=x,y_1=y/x,z_1=z/x$;
\item
$x_2=x/y,y_2=y,z_2=z/y$;
\item
$x_3=x/z,y_3=z,z_3=y/z.$
\end{enumerate}
Let $\pi'_0:\mathbb P^*(M_0\setminus E) \to {\mathbb P^*(M\setminus \{o\}})$  be the contact transformation such that
\[
\pi_M \circ \pi'_0 = \pi^0 \circ \pi_{M_0}.
\]
Let $\pi_{0,i}$ be the restriction of $\pi'_0$ to $\pi_{M_0}^{-1}(V_i\setminus N_0)$, $i=1,2,3$.
 Since
\begin{displaymath}
{\pi}_{0,1}^{\ast}(\xi\frac{d x}{x}  + \eta d y  +  \zeta d z)=
(\xi+y\eta+z\zeta)\frac{d x_1}{x_1}  + x \eta d y_1  +  x \zeta d z_1,
\end{displaymath}
$\xi_1=\xi+y\eta+z\zeta$, $\eta_1=x \eta$ and $\zeta_1=x \zeta$. Hence
\[
{\pi}_{0,1}(x_1,y_1,z_1;\xi_1:\eta_1:\zeta_1)=
(x_1,x_1y_1,x_1z_1;x_1(\xi_1- y_1 \eta_1- z_1 \zeta_1):\eta_1:\zeta_1).
\]
Therefore ${\pi}_{0,1}$ is defined outside of $B_1=\{x_1=\eta_1=\zeta_1=0\}$.

The canonical $1$-form of 
 $T^*\langle V_2/N_0\cap V_2\rangle)$
 $[T^*\langle V_2\setminus \widetilde N/(E\setminus \widetilde N)\cap V_2 \rangle]$ 
 is 
 \[
 \xi_2\frac{dx_2}{x_2}+\eta_2\frac{dy_2}{y_2}+\zeta_2dz_2
 \qquad
 \left[ \xi'_2  dx_2+\eta_2 \frac{dy_2}{y_2} +\zeta_2dz_2, 
 \right],
 \]
 where $\xi_2=x_2\xi'_2$ if $x_2\not=0$.
 Since
 \[
 {\pi}_{0,2}^{\ast}(\xi\frac{d x}{x}  + \eta d y  +  \zeta d z)=
\xi\frac{d x_2}{x_2}  +(\xi+y\eta+z\zeta) \frac{d y_2}{y_2}  +  y\zeta d z_2,
\]
${\pi}_{0,2}$ is given by $x=x_2y_2$, $y=y_2$, $z=y_2z_2$,
\begin{equation}\label{PIZERO}
\xi=y_2\xi_2,
\qquad
\eta=\eta_2-\xi_2-z_2\zeta_2,
\qquad
\zeta=\zeta_2.
\end{equation}
Therefore ${\pi}_{0,2}$ is defined outside of $\{y_2=\eta_2-\xi_2=\zeta_2=0\}$.
Since $\xi_2=x_2\xi'_2$, when $x_2\neq 0$, ${\pi}_{0,2}$ is defined outside of the union of the sets
\[
B_2=\{x_2=y_2=\eta_2-\xi_2=\zeta_2=0\},
\]
\[
B'_2=\{y_2=\eta_2-x_2\xi'_2=\zeta_2=0,x_2\neq 0\}.
\]
Let $\Gamma_0$ be a Legendrian variety of $\mathbb P^*\langle M_0/N_0\rangle$.
Since $B_1\cap \{\xi_1=0\}=\emptyset$, $\Gamma_0\cap B_1=\emptyset$ by Lemma \ref{RESIDUAL}.
By a similar argument $\Gamma_0\cap B'_2=\emptyset$.
By Theorem \ref{SIGMAAB}, $\Gamma_0\cap B_2=\emptyset$. We apply the same reasoning to $V_3$.
\end{proof}

\begin{lemma}\label{NONTRIVCONE}
 If $\Sigma^{N}_{o}(S)$ is finite, $C_{o}(S)$ is a union of planes.
\end{lemma}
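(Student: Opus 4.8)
The plan is to argue by contradiction, translating the hypothesis that $C_o(S)$ is \emph{not} a union of planes into the construction of a one–parameter family of tangent planes to $S$ whose limits sweep out a positive–dimensional subset of $\Sigma^N_o(S)$, contradicting finiteness. Since $N$ is smooth we may choose coordinates $(x,y,z)$ with $N=\{x=0\}$ and $o$ at the origin; the canonical logarithmic $1$-form is $\xi\,\frac{dx}{x}+\eta\,dy+\zeta\,dz$. Write $m=m_o(S)$ and let $F_m(x,y,z)$ be the degree-$m$ homogeneous leading form of a defining equation $F$ of $S$, so that $C_o(S)=\{F_m=0\}$. To say that $C_o(S)$ is a union of planes is to say that $F_m$ is, up to a constant, a product of linear forms; the negation is that $F_m$ has an irreducible factor of degree $\ge 2$, and in particular (after a generic linear change of the $(y,z)$ coordinates fixing $N$) the plane section behaviour of $C_o(S)$ along the pencil of planes through a fixed line is not constant.

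The key computation is a blow-up at $o$, exactly as set up in the proof of Theorem \ref{CLASSICAL} but now in the logarithmic category with poles along $N=\{x=0\}$; this is legitimate by Lemma \ref{PCIRC}, whose diagram \eqref{LOGBU} provides the open set $\mathbb P^\circ\langle M_0/N_0\rangle$ through which $\mathbb P^*_S\langle M/N\rangle$ and its strict transform $\mathbb P^*_{S_0}\langle M_0/N_0\rangle$ both factor, together with the fact (parts $(c)$, $(d)$) that Legendrian varieties avoid the indeterminacy loci. In the chart $x_1=x,\ y_1=y/x,\ z_1=z/x$ one has, as in the proof of Lemma \ref{BLOWLIMIT},
\[
\pi_0^*\!\left(\xi\tfrac{dx}{x}+\eta\,dy+\zeta\,dz\right)=(\xi+y_1\eta+z_1\zeta)\tfrac{dx_1}{x_1}+x_1\eta\,dy_1+x_1\zeta\,dz_1,
\]
so on the fibre $\mathbb P^*_{o_\ell}E$ the map $\pi_0$ restricts to the identity $(\xi_1:\eta_1:\zeta_1)\mapsto(\xi_1:\eta_1:\zeta_1)$, just as in \eqref{FITS}. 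The strict transform $S_0$ meets $E=\{x_1=0\}$ in the projectivized tangent cone $\mathrm{Proj}(C_o(S))$, which is a \emph{curve} in $E\cong\mathbb P^1_{(y_1:z_1)}\times\{x_1\}$ (more precisely in the exceptional $\mathbb P^2$, cut down by the logarithmic structure), and this curve is a line precisely when $C_o(S)$ is a plane. If $C_o(S)$ is not a union of planes, then $S_0\cap E$ contains an irreducible component $\Delta$ which is not a line; along the smooth locus of $\Delta$ the surface $S_0$ is transverse to $N_0$ at a generic point, and the logarithmic conormal $\mathbb P^*_{S_0}\langle M_0/N_0\rangle$ restricted over $\Delta$ is, by Lemma \ref{RESIDUAL}, a Legendrian curve in $\mathbb P^*E$ dominating the dual curve $\Delta^\vee$, which is positive-dimensional. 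Pushing forward by $\pi_0$ (which is injective on fibres of $E$ by the identity formula above), and combining with the inclusion
\[
\Sigma^N_o(S)\ \supseteq\ \pi_0\!\left(\mathbb P^*_{S_0}\langle M_0/N_0\rangle\cap\pi_{M_0}^{-1}(E)\right)
\]
coming from the factorization in \eqref{LOGBU} and the fact that $S_0\cap E\subset E=\pi_{M_0}^{-1}(o)$ maps to $o$, we conclude that $\Sigma^N_o(S)$ contains the infinite set $\pi_0(\Delta^\vee)$, contradicting finiteness.

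The main obstacle, and the step requiring the most care, is the bookkeeping of the logarithmic pole along $N$ in the blow-up: one must check that in the relevant charts of $M_0$ the divisor $N_0$ is $\{x_1=0\}\cup E$ or just $E$, identify precisely which piece of the exceptional $\mathbb P^2$ carries the logarithmic structure, and verify that the generic point of the non-linear component $\Delta$ of $S_0\cap E$ is a \emph{smooth} point of $N_0$ where Theorem \ref{CHANGETRANS} (or directly Lemma \ref{RESIDUAL} together with the transversality computation) applies to guarantee that $\Sigma^{N_0}_\sigma(S_0)$ is already infinite there — or, alternatively, that the dual curve $\Delta^\vee$ genuinely contributes infinitely many points to $\Sigma^N_o(S)$ rather than collapsing under $\pi_0$. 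The identity $\widehat\pi_\ell(\xi_0:\eta_0:\zeta_0)=(\xi_0:\eta_0:\zeta_0)$ on fibres of $E$ is what rules out such a collapse, so the whole argument hinges on reproducing that computation faithfully in the logarithmic setting, which Lemma \ref{PCIRC} and the displayed pullback formula above are designed to supply.
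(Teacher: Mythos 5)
Your overall strategy (blow up at $o$, use Lemma \ref{PCIRC} to push the part of the logarithmic conormal lying over $E$ into $\Sigma^N_o(S)$, and argue that a non-linear component of $S_0\cap E$ contributes infinitely many points) is the same framework as the paper's, but the step on which everything hinges — ruling out the collapse of that curve to a point — is justified by a wrong formula, and the principle you invoke would not suffice even if the formula were right. In the logarithmic setting the map over $E$ is \emph{not} the fibrewise identity of Theorem \ref{CLASSICAL}: that identity is the classical computation, where the target carries the form $\xi\,dx+\eta\,dy+\zeta\,dz$. Here the target form is $\xi\,\tfrac{dx}{x}+\eta\,dy+\zeta\,dz$, and the correct pullback is $(\xi+x_1y_1\eta+x_1z_1\zeta)\tfrac{dx_1}{x_1}+x_1\eta\,dy_1+x_1\zeta\,dz_1$ (you dropped the factors $x_1$), so by the formula for $\pi_{0,1}$ in Lemma \ref{PCIRC} the restriction over $E=\{x_1=0\}$ is $(\xi_1:\eta_1:\zeta_1)\mapsto(0:\eta_1:\zeta_1)$, independent of the base point $(y_1,z_1)$. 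This is forced by Lemma \ref{CONTIDO}: the image must lie in the line $\Sigma^N_o=\{\xi=0\}$, so the map cannot be the identity on a fibre $\mathbb P^2$. Moreover, "injective on each fibre" is not the relevant property: the Legendrian curve $\mathbb P^*_\Delta E$ moves across fibres, and points over different base points can (and for a line component $\Delta$ actually do) have the same image — the conormal of a projective line is a curve on which the map above is constant. So the sentence "the identity on fibres rules out such a collapse" is exactly where the proof breaks.

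What saves the statement is a genuinely different fact, which your write-up never establishes: with the correct formula, the image of $\mathbb P^*_\Delta E$ (embedded via Lemma \ref{RESIDUAL} as $\{\xi_1=0\}$) is the closure of the set of conormal directions $(\eta_1:\zeta_1)$ of $\Delta$, i.e.\ the Gauss image of $\Delta$, and one must prove that this is infinite unless $\Delta$ is a line — equivalently, that a curve whose image collapses to a point has constant tangent direction and hence is a line (one must also treat separately the component $\widetilde N\cap E$, which is the line at infinity of the chart you work in). The paper does precisely this non-collapse analysis, in the chart $V_2$ at points of $L\cap\widetilde N$ supplied by Bezout, via the formula \eqref{PIZERO}: the condition "image is a point" translates into $\delta=\varepsilon_2$, i.e.\ the ODE $(k_2-k_1)\varepsilon_1\varepsilon_2+t(\varepsilon_1\varepsilon_2'-\varepsilon_1'\varepsilon_2)=0$, whose only solutions give projective lines. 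Your argument could be repaired along the lines sketched above (finitely many tangent directions forces each irreducible component to be a line), but as written the decisive step rests on transplanting the non-logarithmic identity of Theorem \ref{CLASSICAL} into a situation where it is false, so there is a genuine gap.
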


\begin{proof}
Let $L$ be an irreducible component of $S_0\cap E$. Notice that if $L= \widetilde N \cap E$, 
$L$ is the projectivization of $C_{o}(N)$. Assume $L\neq \widetilde N \cap E$.
By Bezout's Theorem, there is $\sigma\in L\cap \widetilde N$.
We can assume that $\sigma$ is the origin of $V_2$.
Let $\gamma$ be an irreducible component of the germ of $L$ at $\sigma$.
There is a local parametrization of $\gamma$  of the type
\begin{displaymath}\label{PARAMCURVE0}
x_2=\varepsilon_1(t) t^{k_1},\; y_2=0,\;z_2=\varepsilon_2(t) t^{k_2},
\end{displaymath}
such that $k_1,k_2$ are positive integers, $(k_1,k_2)=1$, $\varepsilon_1,\varepsilon_2\in\mathbb C\{t\}$ and $\varepsilon_1\not \equiv 0$. Furthermore, we can assume
\begin{enumerate}
\item[($a$)]
if $\varepsilon_2\equiv 0$, $\varepsilon_1\equiv 1$ and $k_1=1$,
\item[($b$)]
if $k_j\geq k_n$, $\varepsilon_n\equiv 1$ and $\varepsilon_j(0)\neq 0$,
\end{enumerate}
where $j,n\in\{1,2\}$ and $j\neq n$. 
Therefore $\mathbb P^*_{\gamma}( N_0\setminus \widetilde N)$ admits a parametrization
\begin{displaymath}
x_2=\varepsilon_1(t) t^{k_1},\;y_2=0,\;z_2=\varepsilon_2(t) t^{k_2},\;\xi_2=-\delta(t)t^{k_2}\zeta_2, \;\eta_2=0,
\end{displaymath}
where $\delta(t)=\varepsilon_1(t)(k_2\varepsilon_2(t)+\varepsilon'_2(t) t)/(k_1\varepsilon_1(t)+\varepsilon'_1 (t)t)$. 

Since $\Sigma_o^N(S)\neq\Sigma_o^N$, $\pi_0(\mathbb P^*_{\gamma}( N_0\setminus \widetilde N))$ is a point.
By (\ref{PIZERO}), $\delta=\varepsilon_2$. Therefore
\begin{equation}\label{ZEROEQU0}
(k_2-k_1)\varepsilon_1\varepsilon_2+t(\varepsilon_1\varepsilon'_2-\varepsilon'_1\varepsilon_2)= 0.
\end{equation}
If $\varepsilon_2=0$, $\gamma$ is contained in a projective line. Hence $L$ is a projective line. 
Assume $\varepsilon_2(0)\neq 0$. 
Then $k_2-k_1=0$. 
Hence we can assume $k_2=1$, $\varepsilon_1= 1$. 
Replacing $k_1$, $k_2$ and $\varepsilon_1$ in (\ref{ZEROEQU0}), we conclude that $\varepsilon'_2=0$.
Therefore $\varepsilon_2\in\mathbb C^{\ast}$. 
Hence $L$ is a projective line. 
\end{proof}

\begin{theorem}\label{TRIVCONE}
 Assume  $S$ is \em non degenerated \em at $o$.
  Then  $\Sigma^{N}_{o}(S)$ is finite if and only if $C_o(S)$ is a union of planes and for each $\sigma\in S_0\cap E$, 
  $\Sigma^{N_0}_{\sigma}(S_0)$ is finite.
\end{theorem}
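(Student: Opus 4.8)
The plan is to prove both implications by relating $\Sigma^N_o(S)$ to the logarithmic limits of tangents $\Sigma^{N_0}_\sigma(S_0)$ of the strict transform via the commutative diagram (\ref{LOGBU}) of Lemma \ref{PCIRC}. The crucial point of the non-degeneracy hypothesis is that it guarantees the map $\pi_0:\mathbb P^\circ\langle M_0/N_0\rangle\to\mathbb P^*\langle M/N\rangle$ behaves well on the relevant Legendrian varieties: by parts $(c)$ and $(d)$ of Lemma \ref{PCIRC}, $\mathbb P^*_S\langle M/N\rangle$ is contained in the image of $\pi_0$ and $\mathbb P^*_{S_0}\langle M_0/N_0\rangle$ is contained in $\mathbb P^\circ\langle M_0/N_0\rangle$, so that
\[
\pi_0\bigl(\mathbb P^*_{S_0}\langle M_0/N_0\rangle\bigr)=\mathbb P^*_S\langle M/N\rangle .
\]
Intersecting with $\pi_M^{-1}(o)$ and using that $\pi_0$ restricts to an isomorphism on the part of $\mathbb P^*\langle M_0/N_0\rangle$ lying over $E$ away from the ``bad'' loci $B_i,B_i'$ of Lemma \ref{PCIRC} (which a Legendrian variety avoids), one gets that $\Sigma^N_o(S)$ is the union, over $\sigma\in S_0\cap E$, of the images $\pi_0\bigl(\Sigma^{N_0}_\sigma(S_0)\bigr)$, together with the contribution of the exceptional curve $\widetilde N\cap E$, which is exactly $\mathrm{Proj}(C_o(S))$ — so its conormal's contribution to $\Sigma^N_o(S)$ is the dual of $\mathrm{Proj}(C_o(S))$.

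For the forward implication, assume $\Sigma^N_o(S)$ is finite. Then by Lemma \ref{NONTRIVCONE}, $C_o(S)$ is a union of planes; this is where the non-degeneracy is used, since Lemma \ref{NONTRIVCONE}'s proof rules out non-planar cone components by producing, from a non-planar component $L$ of $S_0\cap E$, a one-parameter family of tangent hyperplanes via the parametrization computation, contradicting finiteness. For the finiteness of $\Sigma^{N_0}_\sigma(S_0)$ at each $\sigma\in S_0\cap E$: from the equality above, $\pi_0\bigl(\Sigma^{N_0}_\sigma(S_0)\bigr)\subset\Sigma^N_o(S)$ is finite; since $\pi_0$ over $E\setminus E^\sigma$ is finite-to-one on the Legendrian variety (it is an isomorphism there by Lemma \ref{PCIRC}$(b)$, after noting $\mathbb P^*_{S_0}\langle M_0/N_0\rangle$ meets $\pi_{M_0}^{-1}(E)$ in a Legendrian variety of $\mathbb P^*E$ avoiding the bad loci), finiteness of the image forces finiteness of $\Sigma^{N_0}_\sigma(S_0)$ — unless $\sigma$ is a singular point of $N_0$, i.e. $\sigma\in\widetilde N\cap E$, but there one invokes Lemma \ref{RESIDUAL}/Lemma \ref{CONTIDO}-type control together with $\Sigma^{N_0}_\sigma$ being the fiber $\mathbb P^*_\sigma E$ and a dimension count. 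Conversely, if $C_o(S)$ is a union of planes and each $\Sigma^{N_0}_\sigma(S_0)$ is finite, then $\mathrm{Proj}(C_o(S))$ is a finite union of lines, so its dual is finite, and each $\pi_0\bigl(\Sigma^{N_0}_\sigma(S_0)\bigr)$ is finite as the image of a finite set; the displayed decomposition of $\Sigma^N_o(S)$ then exhibits it as a finite union of finite sets, hence finite.

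The main obstacle I expect is the careful bookkeeping at the points $\sigma\in S_0\cap E$ that lie on the singular locus $\widetilde N\cap E$ of $N_0$ — precisely the points where $\pi_0$ is only a \emph{bimeromorphic} map and where one cannot directly apply the clean isomorphism statement of Lemma \ref{PCIRC}$(b)$. There one must show that the fiber $\Sigma^{N_0}_\sigma(S_0)$ of the conormal of $S_0$ (with poles along the now-singular divisor $N_0$) is still forced to be finite, using that $S\cap N^\sigma$-type configurations do not arise here because $N$ was smooth and $S$ non-degenerate, so $S_0\cap(\widetilde N\cap E)$ is a finite set and one is in the situation controlled by Theorem \ref{SIGMAAB} or a direct parametrization argument in the chart $V_2$. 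Verifying that the $B_2,B_2'$ loci are genuinely missed by $\mathbb P^*_{S_0}\langle M_0/N_0\rangle$ in this singular-divisor situation, so that the image computation goes through, is the delicate technical heart of the argument.
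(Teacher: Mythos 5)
Your overall architecture is the same as the paper's (push the decomposition of $\mathbb P^*_{S_0}\langle M_0/N_0\rangle\cap\pi_{M_0}^{-1}(E)$ forward by $\pi_0$ of Lemma \ref{PCIRC}, use Lemma \ref{NONTRIVCONE} for the cone), and your reverse implication is essentially fine. But the forward implication rests on a claim that is false: you assert that $\pi_0$ is an isomorphism, or at least finite-to-one, on the part of the Legendrian lying over $E$ away from the loci $B_i,B_i'$, citing Lemma \ref{PCIRC}$(b)$. That statement of the lemma concerns only $\mathbb P^*(M_0\setminus E)\to\mathbb P^*(M\setminus\{o\})$, i.e.\ the locus \emph{away} from $E$. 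Over $E$ the explicit formulas in the proof of Lemma \ref{PCIRC} (e.g.\ $\pi_{0,1}(0,y_1,z_1;\xi_1:\eta_1:\zeta_1)=(0,0,0;0:\eta_1:\zeta_1)$ and formula (\ref{PIZERO})) show that $\pi_0$ restricted to each fiber $\mathbb P^*_\sigma\langle M_0/N_0\rangle\cong\mathbb P^2$ is a linear projection from a point onto the line $\{\xi=0\}$; in particular it contracts each conormal $\mathbb P^*_{L_i}E$ to a point. Your own argument needs this contraction (that is exactly why the dual of $\mathrm{Proj}(C_o(S))$ is finite when the cone is a union of planes), so "finite-to-one on the Legendrian over $E$" is internally inconsistent with the rest of your proof, and taken literally it would forbid $\Sigma^N_o(S)$ from ever being finite. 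Consequently the step "finiteness of the image forces finiteness of $\Sigma^{N_0}_\sigma(S_0)$" is not justified: since $\pi_0$ collapses fibers, an infinite fiberwise limit could a priori have finite image, and ruling this out is precisely the content one must prove.

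What is actually needed (and what the paper supplies) is: at a point $o_i\in(S_0\cap E)\setminus\widetilde N$ where $\Sigma^{N_0}_{o_i}(S_0)=\Sigma^{N_0}_{o_i}$, the image of the whole fiber is the entire line $\{\xi=0\}$, hence infinite, so such points cannot occur when $\Sigma^N_o(S)$ is finite; and at the finitely many points $\sigma_i\in S_0\cap E\cap\widetilde N$ (the singular points of $N_0$), one uses the structure of $\Sigma^{N_0}_{\sigma_i}(S_0)$ — Theorem \ref{SIGMAAB} puts it inside finitely many lines $\Sigma^{N_0}_{\sigma_i}(a:b)$ with $a,b>0$, and the same positivity (equivalently, the avoidance of $B_2$ already established in the proof of Lemma \ref{PCIRC}) shows an infinite limit contains a curve missing the center of the fiberwise projection, whose image is therefore the whole line $\{\xi=0\}$; this is the "argument of the type used in Theorem \ref{CLASSICAL}". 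Your sketch for these points ("Lemma \ref{RESIDUAL}/Lemma \ref{CONTIDO}-type control together with $\Sigma^{N_0}_\sigma$ being the fiber $\mathbb P^*_\sigma E$ and a dimension count") does not do this: Lemmas \ref{RESIDUAL} and \ref{CONTIDO} require the divisor to be smooth, whereas $N_0$ is singular along $\widetilde N\cap E$, and there the relevant fiber is $\mathbb P^*_{\sigma}\langle M_0/N_0\rangle\cong\mathbb P^2$, not $\mathbb P^*_\sigma E$. (Also a small conflation: the curve whose conormal contributes the dual of $\mathrm{Proj}(C_o(S))$ is $S_0\cap E$, not $\widetilde N\cap E$, which is the projectivization of $C_o(N)$.) So you correctly located the delicate points, but the mechanism you propose for them is wrong, and the key "infinite limit upstairs implies infinite image downstairs" step is missing.
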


\begin{proof}
By Lemma \ref{NONTRIVCONE}, $S_0\cap E$ is a union of projective lines $L_i$, $i=1,...,n$.
Since $S$ is non degenerated, there are $\sigma_1,...,\sigma_n$ such that
\[
S_0\cap E\cap \widetilde N=\{\sigma_1,...,\sigma_n\}.
\]
Moreover,
there are points $o_{1},...,o_m$ of $(S_0\cap E)\setminus \widetilde N$ such that
\[
\mathbb P^*_{S_0}\langle M_0/N_0 \rangle \cap \pi_{M_0}^{-1}(E)=
\cup_i\mathbb P^*_{L_i}E\cup\cup_i\Sigma_{\sigma_i}^{N_0}(S_0)\cup\cup_i\Sigma_{o_i}^{N_0}.
\]
%Hence for each $\sigma\in (S_0\cap E)\setminus \widetilde N$, 
%$\Sigma_\sigma^N(S)$ is infinite if and only if $\sigma\in\{o_1,...,o_m\}$.
By Lemma \ref{PCIRC},
\[
\Sigma^N_o(S) \subset \pi_0(\cup_i\mathbb P^*_{L_i}E\cup\cup_i\Sigma_{\sigma_i}^{N_0}(S_0)\cup\cup_i\Sigma_{o_i}^{N_0}).
\]
By the arguments of Lemma \ref{NONTRIVCONE}, $\pi_0(\mathbb P^*_{L_i}E)$ is a point, for $i=1,...,n$.
The type of arguments  used in Theorem \ref{CLASSICAL} show that 
$\pi_0(\Sigma_{\sigma_i}^{N_0}(S_0))$ is finite if and only if 
$\Sigma_{\sigma_i}^{N_0}(S_0)$ is finite, $i=1,...,n$ and $\pi_0(\Sigma_{o_i}^{N_0})$ is infinite for $i=1,...,m$.

Hence $\Sigma_o^N(S)$ is finite if and only if $\Sigma_{\sigma_i}^{N_0}(S_0)$ is finite, $i=1,...,n$ and $m=0$.
\end{proof}

\section{The Second  sequence of Blow-ups}\label{SSB}

We will introduce a generalization for surfaces of a construction for curves that was introduced in \cite{CNP}.
Given non negative integers $a_0,a_1,...,a_g$ assume that $a_1,...,a_g\ge1$ or $g=1$, $a_1=0$.
Set $[a_0,0]=\infty$, $[a_0]=a_0$. If $g,a_g\ge 1$, set 
\[
[a_0,...,a_g]=a_0+[a_1,...,a_g]^{-1}.
\]
Assuming $a_g\ge 2$, $[a_0,...,a_g]=[a_0,...,a_g-1,1]$. 
It is well known that each positive rational number is described by exactly two continuous fractions.

If $\alpha=[a_0,...,a_g]$ we call \em length \em of $\alpha$ to $|\alpha|=a_0+\cdots+a_g$.
Set $n_\infty=1$, $d_\infty=0$.
If  $\alpha=a/b$ where $a,b$ are positive integers such that $(a,b)=1$, set $n_\alpha=a$, $d_\alpha=b$, $e_\alpha=a+b$.

Let $\alpha$ be a positive rational number, $\alpha=[a_0,...,a_g]$. If $\alpha$ is an integer, set $\alpha_\omega=\alpha-1$, $\alpha_\pi=\infty$. Otherwise, $g\ge 1$. Moreover, we can assume $a_g\ge 2$. 
Set $\alpha_\omega=[a_0,...,a_g-1]$ and $\alpha_\pi=[a_0,...,a_{g-1}]$ if $g$ even, otherwise set $\alpha_\omega=[a_0,...,a_{g-1}]$ and $\alpha_\pi=[a_0,...,a_g-1]$.

Assume $\alpha=1$ or $a_g\ge 2$. Set $\alpha_s=[a_0,...,a_{g-1},a_g-1,2]$ and $\alpha_b=[a_0,...,a_{g-1},a_g+1]$ if $g$ even, otherwise set $\alpha_s=[a_0,...,a_{g-1},a_g+1]$ and $\alpha_b=[a_0,...,a_{g-1},a_g-1,2]$.
Notice that $\alpha_s$ and $\alpha_b$ are the only rationals such that ${\alpha_s}_{\pi}={\alpha_b}_{\omega}=\alpha$. Moreover, ${\alpha_s}_{\omega}=\alpha_{\omega}$, ${\alpha_b}_{\pi}=\alpha_{\pi}$,
\[
\alpha_s=\frac{n_\alpha+n_{\alpha_\omega}}{d_\alpha+d_{\alpha_\omega}},\; \alpha_b=\frac{n_\alpha+n_{\alpha_\pi}}{d_\alpha+d_{\alpha_\pi}},
\]
$e_{\alpha}+e_{{\alpha}_{\pi}}=e_{{\alpha}_{b}}$ and $e_{\alpha}+e_{{\alpha}_{\omega}}=e_{{\alpha}_{s}}$.

\begin{blank}\label{SECONDPROCESS}\em
Let $M$ be the germ of a complex analytic manifold of dimension $3$ at a point $o$.
Let $N$ be a smooth surface of $M$.
Let $S$ be a singular surface of $M$.
Assume that $N$ is not an irreducible component of $S$ 
and $C_o(S)\supset C_o(N)$.

Let $\pi^0:M^0\to M$ be the blow up of $M$ at $o$.
Set $N^0=(\pi^0)^{-1}(N)$ and $E^0=(\pi^0)^{-1}(o)$.
Let $S^0$ be the strict transform of $S$ by $\pi^0$.
Notice that $S^0$ contains the singular locus $Z^0$ of $N^0$.

Let $N^k$ be a normal crossings divisor of a manifold $M^k$ of dimension $3$.
Let $S^k$ be a singular surface of $M^k$.
Let $Z^k$ be the union of the connected components of the singular locus of $N^k$ that are contained in $S^k$.

We iterate the process defining $\pi^{k+1}:M^{k+1}\to M^k$ as the blow up of $M^k$ along $Z^k$,
defining $S^{k+1}$ as the strict transform of $S^k$ by $\pi^{k+1}$ and 
setting $N^{k+1}=(\pi^{k+1})^{-1}(N^k)$, $E^{k+1}=(\pi^{k+1})^{-1}(Z^k)$.

This process  will terminate after a finite number $k_0$ of steps.

The intersection of $S^{k_0}$ with the singular locus of $N^{k_0}$ is a finite set. 
We will now perform the first sequence of blow-ups at each point of this intersection.

We obtain in this way a map $ \pi:  \widetilde M\to M$, 
a normal crossings divisor $\widetilde N$ of $ \widetilde M$
 and a singular surface $ \widetilde S$ of $ \widetilde M$.
We call $\pi:  \widetilde M\to M$ the \em second sequence of blow-ups. \em
 \end{blank}

Let $M^{(\alpha)}$ be the gluing of the affine sets $U_{\alpha,i}$, with coordinates 
$(u_{\alpha,i},v_{\alpha,i},w_{\alpha,i})$, $i=1,2,3,4$,  by the transformations

\noindent  $v_{\alpha,3}=v_{\alpha,1}w_{\alpha,1}^{e_\alpha}$, \qquad 
       $w_{\alpha,3}=w_{\alpha,1}^{-1}$, \qquad 
       $u_{\alpha,3}=u_{\alpha,1}w_{\alpha,1}^{-e_{\alpha_\pi}} $;

\noindent $v_{\alpha,4}=v_{\alpha,2}w_{\alpha,2}^{-e_\alpha}$, \qquad 
       $w_{\alpha,4}=w_{\alpha,2}^{-1}$, \qquad 
       $u_{\alpha,4}=u_{\alpha,2}w_{\alpha,2}^{e_{\alpha_\omega}} $;

\noindent $v_{\alpha,2}=v_{\alpha,1}^{-1}, \qquad w_{\alpha,2}=w_{\alpha,1}, \qquad u_{\alpha,2}=u_{\alpha,1}v_{\alpha,1}$.

\noindent
Let $E^{(\alpha)}$, $N^\alpha_b$, $N^\alpha_s$ be defined by

\noindent  $E^{(\alpha)}\cap U_{\alpha,i}=\{u_{\alpha,i}=0\}$, $i=1,2,3,4$;

\noindent
$N^\alpha_b\cap U_{\alpha,i}=\{v_{\alpha,i}=0\} \quad i=1,3, \qquad N^\alpha_b\cap U_{\alpha,i}=\emptyset, \quad i=2,4$;

\noindent
$N^\alpha_s\cap U_{\alpha,i}=\{v_{\alpha,i}=0\} \quad i=2,4, \qquad N^\alpha_s\cap U_{\alpha,i}=\emptyset, \quad i=1,3$;

\noindent
Set 
\[
Z^{\alpha_b}=E^{(\alpha)}\cap N^\alpha_b, \qquad
Z^{\alpha_s}=E^{(\alpha)}\cap N^\alpha_b,
\]
\[
M^{(\alpha)}_b=M^{(\alpha)}\setminus N^\alpha_s, \qquad
M^{(\alpha)}_s=M^{(\alpha)}\setminus N^\alpha_b.
\]
We will denote by 
\[
N^{\alpha_b}_s \quad
[N^{\alpha_s}_s,~N^{\alpha_b}_b,~N^{\alpha_s}_b ]
\]  
the strict transform of $E^{(\alpha)}$ $[N^\alpha_s,N^\alpha_b,E^{(\alpha)}]$
by the blow up of 
$M^{(\alpha)}_b$ $[M^{(\alpha)}_s,M^{(\alpha)}_b,M^{(\alpha)}_s]$
along
$Z^{\alpha_b}$ [$Z^{\alpha_s}$
,$Z^{\alpha_b}$,$Z^{\alpha_s}$].

\begin{lemma}
Assume $M$ is an affine set with coordinates $(x,y,z)$,
$N=\{x=0\}$, $o$ is the origin and $k\ge 1$.
Then the following statements hold:

\noindent
There are finite sets $I^k_b,I^k_s\subset \mathbb Q$ such that $M^k$ is a gluing of the manifolds
\[
M^{(\alpha)}_b, 
~~ 
\alpha\in I^k_b,
\qquad
M^{(\alpha)}_s, 
~~ 
\alpha\in I^k_s;
\]

\noindent
For each $\alpha\in I^k_b$, $E^k\cap M^{(\alpha)}_b\subset E^{(\alpha)}$, $N^k\cap M^{(\alpha)}_b\subset E^{(\alpha)}\cup N^\alpha_b$.

\noindent
For each $\alpha\in I^k_s$, $E^k\cap M^{(\alpha)}_s\subset E^{(\alpha)}$, $N^k\cap M^{(\alpha)}_s\subset E^{(\alpha)}\cup N^\alpha_s$.

\noindent
For each $\alpha$,
$Z^{\alpha}$ is a projective line.

\noindent
The restriction
$\pi^{\alpha_b}:M^{(\alpha_b)}\to M^{(\alpha)}\setminus N^\alpha_s$ of $\pi^{k+1}:M^{k+1}\to M^k$ is given by

\noindent
$u_{\alpha,i}=u_{\alpha_b,i}$,
\qquad
$v_{\alpha,i}=u_{\alpha_b,i}v_{\alpha_b,i}$
\qquad
$w_{\alpha,i}=w_{\alpha_b,i}$, 
\qquad 
$i=1,3$,

\noindent
$u_{\alpha,i-1}=u_{\alpha_b,i}v_{\alpha_b,i}$,
\qquad
$v_{\alpha,i-1}=u_{\alpha_b,i}$
\qquad
$w_{\alpha,i-1}=w_{\alpha_b,i}$, 
\qquad 
$i=2,4$,

\noindent
The restriction
$\pi^{\alpha_s}:M^{(\alpha_s)}\to M^{(\alpha)}\setminus N^\alpha_b$ of $\pi^{k+1}:M^{k+1}\to M^k$ is given by

\noindent
$u_{\alpha,i+1}=u_{\alpha_s,i}v_{\alpha_s,i}$,
\qquad
$v_{\alpha,i+1}=u_{\alpha_s,i}$
\qquad
$w_{\alpha,i+1}=w_{\alpha_s,i}$, 
\qquad 
$i=1,3$,

\noindent
$u_{\alpha,i}=u_{\alpha_s,i}$,
\qquad
$v_{\alpha,i}=u_{\alpha_s,i}v_{\alpha_s,i}$,
\qquad
$w_{\alpha,i}=w_{\alpha_s,i}$, 
\qquad 
$i=2,4$.
\end{lemma}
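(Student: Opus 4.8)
The plan is to prove this lemma by induction on $k$, tracking explicitly how the manifolds $M^{(\alpha)}_b$ and $M^{(\alpha)}_s$ arise as charts under the iterated blow-ups of paragraph \ref{SECONDPROCESS}. The base case $k=1$ is the blow up $\pi^0:M^0\to M$ of $M$ at the origin, followed (in the indexing of $M^k$) by the blow up along the singular locus $Z^0$ of $N^0=E^0\cup\widetilde N$. In the standard charts of $M^0$ one checks directly that the singular locus $Z^0$ of $N^0$ is a projective line, and that blowing it up produces precisely the charts $U_{\alpha,i}$ of $M^{(\alpha)}_b$ and $M^{(\alpha)}_s$ for the two initial values of $\alpha$ (namely $\alpha=1$, whose $\alpha_b$ and $\alpha_s$ are $2$ and $3/2$, or equivalently the two continued-fraction neighbours of $1$); the gluing relations $v_{\alpha,3}=v_{\alpha,1}w_{\alpha,1}^{e_\alpha}$, etc., are then just the chart-transition maps of the blow up, read off from the exponents $e_\alpha,e_{\alpha_\pi},e_{\alpha_\omega}$. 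This identifies $I^1_b,I^1_s$ and verifies all the listed inclusions $E^k\cap M^{(\alpha)}_b\subset E^{(\alpha)}$, $N^k\cap M^{(\alpha)}_b\subset E^{(\alpha)}\cup N^\alpha_b$ by inspection.

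For the inductive step I would assume the statement for $M^k$ and analyse $\pi^{k+1}:M^{k+1}\to M^k$, the blow up along $Z^k$ (the union of components of the singular locus of $N^k$ lying in $S^k$). By the inductive description, $N^k$ restricted to a chart $M^{(\alpha)}_b$ is contained in $E^{(\alpha)}\cup N^\alpha_b=\{u_{\alpha,i}=0\}\cup\{v_{\alpha,i}=0\}$, so its singular locus there is the line $Z^{\alpha_b}=E^{(\alpha)}\cap N^\alpha_b$; blowing this up gives two new charts, which one computes to be $M^{(\alpha_b)}$ split into its $b$ and $s$ halves along the exceptional divisor, with the transition formulas $u_{\alpha,i}=u_{\alpha_b,i}$, $v_{\alpha,i}=u_{\alpha_b,i}v_{\alpha_b,i}$, $w_{\alpha,i}=w_{\alpha_b,i}$ for $i=1,3$ and the shifted version for $i=2,4$. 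Symmetrically the chart $M^{(\alpha)}_s$ produces $M^{(\alpha_s)}$. Since ${\alpha_b}_{\omega}=\alpha={\alpha_s}_{\pi}$, distinct charts of $M^k$ give rise, after blow up, to a consistent gluing (two $\alpha$-charts sharing a face yield the same new $\beta$-chart from the two sides), so one sets $I^{k+1}_b=\{\alpha_b:\alpha\in I^k_b\}\cup\{\alpha_s:\alpha\in I^k_s\}$ and likewise for $I^{k+1}_s$, and rechecks the inclusions for $E^{k+1},N^{k+1}$; the fact that each $Z^\alpha$ is a projective line is preserved because blowing up a line in a threefold along a transverse pair of smooth divisors again produces exceptional lines.

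The main obstacle I anticipate is bookkeeping rather than conceptual: one must verify that the continued-fraction arithmetic relating $\alpha,\alpha_b,\alpha_s,\alpha_\pi,\alpha_\omega$ exactly matches the monomial exponents appearing in the blow-up chart transitions, in particular the identities $e_\alpha+e_{\alpha_\pi}=e_{\alpha_b}$ and $e_\alpha+e_{\alpha_\omega}=e_{\alpha_s}$ recorded just before paragraph \ref{SECONDPROCESS}, which are precisely what make the transition maps $v_{\alpha_b,3}=v_{\alpha_b,1}w_{\alpha_b,1}^{e_{\alpha_b}}$ and $u_{\alpha_b,3}=u_{\alpha_b,1}w_{\alpha_b,1}^{-e_{(\alpha_b)_\pi}}$ come out right after substituting the blow-up formulas. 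A secondary subtlety is that we are only blowing up those components of the singular locus of $N^k$ contained in $S^k$, so the index sets $I^k_b,I^k_s$ depend on $S$; but since the statement only claims \emph{existence} of finite sets with these properties, it suffices to take all the $\alpha$'s that actually occur, and the listed structural properties hold for each of them regardless of $S$. I would present the base case in full detail in the standard affine charts, then give the inductive step as the single chart computation above, remarking that the remaining charts and the case $k=k_0$ (the final first-sequence-of-blow-ups step) are handled identically, invoking Lemma \ref{BLOWLIMIT} and the description of the first sequence of blow-ups in paragraph \ref{PRIMEIRA} where needed.
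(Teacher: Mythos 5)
Your plan follows essentially the same route as the paper's proof: induction on $k$, with the base case read off in the standard charts of the point blow-up (the paper takes $I^1_b=I^1_s=\{1\}$ and sets $u_{1,1}=x_2$, $v_{1,1}=y_2/x_2$, etc.) and the inductive step carried out by defining the $\alpha_b$, $\alpha_s$ coordinates through the stated formulas for $\pi^{\alpha_b}$, $\pi^{\alpha_s}$ and checking the gluing relations via $e_{\alpha_b}=e_\alpha+e_{\alpha_\pi}$, $e_{\alpha_s}=e_\alpha+e_{\alpha_\omega}$ and ${\alpha_b}_\pi=\alpha_\pi$, ${\alpha_b}_\omega=\alpha$. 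Only cosmetic slips: the base-case index is the single value $\alpha=1$ (and the $\alpha_s$ of $1$ is $1/2$, not $3/2$), and, as you yourself note, $I^{k+1}_b$, $I^{k+1}_s$ acquire $\alpha_b$ or $\alpha_s$ only when the corresponding center $Z^{\alpha_b}$ or $Z^{\alpha_s}$ actually lies in $S^k$, the other indices being carried over unchanged.
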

\begin{proof}
The manifold $M^0$ is the gluing of the affine sets $V_i$ introduced at the proof of Lemma \ref{PCIRC}.
Remark that $Z^1$ is a projective line contained in $V_2\cup V_3$.
Moreover, $I^1_b=I^1_s=\{1\}$.
Setting 

\noindent
$u_{1,1}=x_2$, \qquad $v_{1,1}=y_2/x_2$, \qquad $w_{1,1}=z_2$;

\noindent
$u_{1,2}=y_2$, \qquad $v_{1,2}=x_2/y_2$, \qquad $w_{1,2}=z_2$;

\noindent
$u_{1,3}=x_3$, \qquad $v_{1,3}=y_3/x_3$, \qquad $w_{1,3}=z_3$;

\noindent
$u_{1,4}=y_3$, \qquad $v_{1,4}=x_3/y_3$, \qquad $w_{1,4}=z_3$.

\noindent
 we conclude that the lemma holds for $k=1$.
 
 Assume $|\alpha|=k$. 
 If $Z^{\alpha_b}\supset S^k$, [$Z^{\alpha_s}\supset S^k$] we withdraw $\alpha$ from $I^k_b$ [$I^k_s$] and include $\alpha_b$ into $I^{k+1}_b$ and $I^{k+1}_s$.
  Defining $u_{\alpha_b,i},v_{\alpha_b,i},w_{\alpha_b,i}$ in such a way that $\pi^{\alpha_b}$ is as proposed in this Lemma, we conclude that
 
 \noindent
$u_{\alpha_b,3}=u_{\alpha,3}=u_{\alpha,1}w_{\alpha,1}^{-e_{\alpha_\pi}}=u_{\alpha_b,1}w_{\alpha_b,1}^{-e_{{\alpha_b}_\pi}}$,

\noindent
$v_{\alpha_b,3}=v_{\alpha,3}u_{\alpha,3}^{-1}=
v_{\alpha,1}w_{\alpha,1}^{e_\alpha}u_{\alpha,1}^{-1}w_{\alpha,1}^{e_{\alpha_\pi}}=
v_{\alpha,1}u_{\alpha,1}^{-1}w_{\alpha,1}^{e_\alpha+e_{\alpha_\pi}}=v_{\alpha_b,1}w_{\alpha_b,1}^{e_{\alpha_b}}$

\noindent
and $w_{\alpha_b,3}=w_{\alpha,3}=w_{\alpha,1}=w_{\alpha_b,1}$.
\end{proof}

Let $\pi^{(\alpha)}:E^{(\alpha)}\to Z^{\alpha}$ be the restriction of $\pi^\alpha$. 
Let $C$ be an irreducible curve of $E^{(\alpha)}$. 
We say that $C$ is \em well behaved \em if $C$ is a fiber of $\pi^{(\alpha)}$ 
or $C$ is the graph of a section of $\pi^{(\alpha)}$ such that $C\cap Z^\alpha_b=\emptyset$ and $C$ intersects $Z^\alpha_s$ at exactly one point with multiplicity $e_\alpha$.

We are now able to state the main theorem of this section.
We will prove it at the end of the section.

\begin{theorem}\label{SECONDSEQ}
Let $M$ be a germ of a complex analytic manifold at a point $o$.
Let $N$ be a smooth surface of $M$.
Let $S$ be a singular surface of $M$.
Let $\pi:\widetilde M\to M$ be the second sequence of blow-ups. 
Then $\Sigma_o^N(S) = \Sigma_o^N$ if and only if one of the following conditions is verified:

\begin{enumerate}[$(a)$]
\item
The tangent cone of $S$ is not a union of planes
\item
There is an integer $k$ and an irreducible component of $S^k\cap E^k$ that is not well behaved.
\item
There is a regular point $o_1$ of $ \widetilde N$ such that $\Sigma_{o_1}^{ \widetilde N}( \widetilde S)\supset \Sigma_{o_1}^{ \widetilde N}$.
\end{enumerate}
\end{theorem}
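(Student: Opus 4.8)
The plan is to reduce the statement to the local analyses already carried out in the preceding sections, organized around the three ways that $\Sigma_o^N(S)$ can be forced to be all of $\Sigma_o^N$. First I would observe that since $N$ is smooth and $C_o(S)\supset C_o(N)$ (otherwise $S$ is non degenerate and Theorem \ref{TRIVCONE} applies directly, reducing to the divisor after one blow up), the second sequence of blow-ups of paragraph \ref{SECONDPROCESS} is the correct tool. By Lemma \ref{CONTIDO} we always have $\Sigma_o^N(S)\subset\Sigma_o^N$, so everything is about when equality holds. The key bookkeeping device is the transfer lemma: by Lemma \ref{PCIRC} (applied repeatedly, once at each blow-up centered at $o$ or at a component of the singular locus of the successive divisors, exactly as in the proof of Theorem \ref{TRIVCONE}), together with Lemma \ref{BLOWLIMIT} for the first-sequence blow-ups performed at the singular crossings points of $N^{k_0}$, one gets that $\Sigma_o^N(S)$ is the $\pi_0$-image of the intersection of $\mathbb P^*_{\widetilde S}\langle\widetilde M/\widetilde N\rangle$ with the preimage of the exceptional locus, decomposed over the components $L$ of the various $S^k\cap E^k$ and over the regular points $o_1$ of $\widetilde N$.

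Next I would handle the three cases as an exhaustive dichotomy. If the tangent cone $C_o(S)$ is not a union of planes, then already $\Sigma_o(S)$ (with $N=\emptyset$) contains the dual of $C_o(S)$, which is a curve of positive degree and hence infinite; since $\Sigma_o^N(S)$ surjects onto the relevant part of $\Sigma_o(S)$ this forces $\Sigma_o^N(S)=\Sigma_o^N$ — this is case $(a)$, and conversely if $(a)$ fails then Lemma \ref{NONTRIVCONE} tells us $C_o(S)$ is a union of planes, so each $S^0\cap E^0$ (and inductively each $S^k\cap E^k$) decomposes into irreducible curves. For such a component $L$, the content of Lemma \ref{NONTRIVCONE}'s proof, reformulated via the explicit charts $U_{\alpha,i}$ of the second sequence, is that $\pi_0(\mathbb P^*_L E^{(\alpha)})$ is a single point \emph{precisely when $L$ is well behaved} in the sense defined before the theorem: a fiber of $\pi^{(\alpha)}$, or a section meeting $Z^\alpha_b$ emptily and $Z^\alpha_s$ at one point with multiplicity $e_\alpha$. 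The computation $\delta=\varepsilon_2$, equation (\ref{ZEROEQU0}), is exactly the condition that pins the curve to be of this shape; if some component is not well behaved, its image under $\pi_0$ is a projective line, forcing equality — this is case $(b)$. Finally, the contribution of the regular points $o_1$ of $\widetilde N$ (the images $\Sigma_{o_1}^{\widetilde N}(\widetilde S)$) is handled by the same argument as in Theorems \ref{CLASSICAL} and \ref{TRIVCONE}: $\pi_0(\Sigma_{o_1}^{\widetilde N}(\widetilde S))$ is finite if and only if $\Sigma_{o_1}^{\widetilde N}(\widetilde S)$ is finite, i.e. $\Sigma_{o_1}^{\widetilde N}(\widetilde S)\neq\Sigma_{o_1}^{\widetilde N}$, which is case $(c)$; and the "spurious" points of $\widetilde S\cap\widetilde N$ that are not crossings contribute full pencils, but Lemma \ref{PCIRC}(c),(d) together with Theorem \ref{SIGMAAB} shows those do not actually appear in a Legendrian variety, so they can be discarded.

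Assembling these: $\Sigma_o^N(S)=\Sigma_o^N$ if and only if the $\pi_0$-image of at least one component of the exceptional intersection is infinite, and the three cases enumerate exactly the ways this happens — non-planar tangent cone, a badly behaved exceptional component somewhere along the second sequence, or a regular point of $\widetilde N$ where the logarithmic limit of tangents of $\widetilde S$ is already maximal. For the "if" direction each case is shown above to force equality; for the "only if" direction, if all three fail, then every component of every $S^k\cap E^k$ is well behaved (hence contributes a point), every relevant $\Sigma_{o_1}^{\widetilde N}(\widetilde S)$ is finite, and the crossings points of $\widetilde N$ contribute only the point $(0:0:1)$ by Lemma \ref{TRIVIAL} via the first sequence of blow-ups, so $\Sigma_o^N(S)$ is a finite union of finite sets, hence finite, hence strictly smaller than $\Sigma_o^N\cong\mathbb P^1$. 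The main obstacle I anticipate is the combinatorial-geometric step in case $(b)$: one must verify that the notion of "well behaved" is stable under the transition maps $\pi^{\alpha_b},\pi^{\alpha_s}$ of the second sequence and correctly captures the vanishing in (\ref{ZEROEQU0}) uniformly across all charts $U_{\alpha,i}$ and all levels $k$ — in other words, that the local computation of Lemma \ref{NONTRIVCONE} globalizes consistently over the glued manifold $M^k$ and interacts correctly with the multiplicities $e_\alpha,e_{\alpha_\pi},e_{\alpha_\omega}$ that govern the continued-fraction bookkeeping. Everything else is an application of Lemmas \ref{CONTIDO}, \ref{TRIVIAL}, \ref{RESIDUAL}, \ref{PCIRC}, \ref{NONTRIVCONE} and Theorems \ref{CLASSICAL}, \ref{SIGMAAB}, \ref{TRIVCONE}.
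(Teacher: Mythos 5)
Your overall architecture (decompose the exceptional contribution of the conormal over the components of the $S^k\cap E^k$ and over the regular points of $\widetilde N$, treat $(a)$, $(b)$, $(c)$ as the three sources of an infinite image, and get the converse by showing every contribution is finite when all three fail) agrees with the paper's plan, and your treatment of $(a)$ and $(c)$ via Lemma \ref{NONTRIVCONE} and Theorems \ref{SIGMAAB}, \ref{TRIVCONE} is essentially the paper's. The genuine gap is case $(b)$, which you assert rather than prove and explicitly flag as ``the main obstacle''. The equivalence you need --- that the contribution of a component $C$ of $S^k\cap E^k$ is a point if and only if $C$ is well behaved --- is exactly Lemma \ref{WELLB}, and its proof is the technical heart of the section: one must first compute the composite map $\upsilon_k:\mathbb P^*\langle E^k/Z^k\rangle\to\Sigma^N_o$ through all the intermediate contact transformations, which is done in Lemma \ref{ADICAO} and the following lemma and produces the formula $p=(-1)^{i+1}e_\alpha p_i+w_{\alpha,i}$ carrying the continued-fraction weights $n_\alpha,d_\alpha,e_\alpha$; one then needs Lemma \ref{CURVE} (every curve of $E^{(\alpha)}$ meets $Z^\alpha$) and a parametrization argument reducing finiteness of $\upsilon_k(\mathbb P^*_{C}\langle E^k/Z^k\rangle)$ to the ODE $t\varepsilon'+\lambda\varepsilon=0$, which forces $a=(-1)^i b e_\alpha$ and hence the characterization of $C$ as a fiber or a section of $\pi^{(\alpha)}$ with a single contact of multiplicity $e_\alpha$ with $Z^\alpha_s$. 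Your suggestion that this is ``exactly'' the computation $\delta=\varepsilon_2$, i.e.\ equation (\ref{ZEROEQU0}) of Lemma \ref{NONTRIVCONE}, is not correct: that computation is the level-one case only (through $\pi_0$ alone), and it cannot produce the multiplicity-$e_\alpha$ condition, which comes precisely from the accumulated weights in $\upsilon_k$ that you never compute. Without this, neither direction of the theorem involving $(b)$ is established.

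A secondary, smaller gap is in your ``only if'' direction: saying that each well-behaved component ``contributes a point'' and each $\Sigma^{\widetilde N}_{o_1}(\widetilde S)$ is finite is not yet enough, because one must also rule out extra vertical fibers $\mathbb P^*_\sigma$ appearing in the closure of the conormal over the exceptional components at intermediate stages. The paper handles this by a downward induction on the levels, using the failure of $(c)$ together with Theorems \ref{SIGMAAB} and \ref{TRIVCONE} to show that over each exceptional component $F_\ell$ the intersection $\mathbb P^*_{S_\ell}\langle M_\ell/N_\ell\rangle\cap\pi_{M_\ell}^{-1}(F_\ell)$ is exactly the closure of $\mathbb P^*_{S_\ell\cap F'_\ell}F'_\ell$; your appeal to Lemma \ref{PCIRC}$(c)$,$(d)$ and Theorem \ref{SIGMAAB} gestures at this but does not supply the induction. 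In short: the skeleton matches the paper, but the two load-bearing ingredients (the computation of $\upsilon_k$ with its continued-fraction bookkeeping, and the resulting Lemma \ref{WELLB}) are missing, so the proposal as written does not prove the theorem.
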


We have contact transformations 
\[
\pi_{k+1}:\mathbb P^*\langle M^{k+1}/N^{k+1}\rangle \to \mathbb P^*\langle M^k/N^k\rangle,
\]
$k\ge 0$, such that the diagrams

\[
\begin{array}{ccccc}
  \mathbb{P}^*\langle M^k/N^k\rangle  &  \overset{\pi_{k+1}}{\longleftarrow} &  \mathbb{P}^*\langle M^{k+1}/N^{k+1}\rangle    \\
 \pi_{M^k} \downarrow \qquad &  &   \qquad \downarrow \pi_{M^{k+1}} \\
  M^k  & \overset{\pi^{k+1}}{\longleftarrow} &   M^{k+1}
\end{array}
\]
commute. 
Set $\tau^k=\pi^2 \circ\cdots\circ\pi^k:M^k \to M^1$,
\[
\tau_k=\pi_2 \circ\cdots\circ\pi_k:P^*\langle M^k/N^k\rangle \to P^*\langle M^1/N^1\rangle,
\qquad k\ge 2.
\]
Let $\alpha\in I^k_b\cup I^k_s$. Let $i\in\{1,2,3,4\}$.
There is a system of  coordinates 
$(u_{\alpha,i},v_{\alpha,i},w_{\alpha,i};\xi_{\alpha,i}:\eta_{\alpha,i}:\zeta_{\alpha,i})$ 
on ${W}_{\alpha,i}=\pi_{M^k}^{-1}(U_{\alpha,i})$  such that 
\begin{equation}\label{1FORM}
\xi_{\alpha,i}\frac{du_{\alpha,i}}{u_{\alpha,i}}+
\eta_{\alpha,i}\frac{dv_{\alpha,i}}{v_{\alpha,i}}+
\zeta_{\alpha,i}dw_{\alpha,i}
\end{equation}
is the restriction to $W_{\alpha,i}$ of the canonical $1$-form of $T^*\langle M^k/N^k\rangle$.

\begin{lemma}\label{ADICAO}
If  $|\alpha|=k$ and $\alpha>1$, $\tau_k(W_{\alpha,i}\cap \pi_{M^k}^{-1}(E^k))\subset W_{1,1}\cap \pi_{M^1}^{-1}(Z^1)$, $i=1,2$ and $\tau_k(W_{\alpha,i}\cap \pi_{M^k}^{-1}(E^k))\subset W_{1,3}\cap \pi_{M^1}^{-1}(Z^1)$, $i=3,4$. 
Moreover, the restriction of $\tau_k$ to $W_{\alpha,i}\cap \pi_{M^k}^{-1}(E^k)$ is given by
\[
w_{1,1}=w_{\alpha,1}=w_{\alpha,2}, 
\:
w_{1,3}=w_{\alpha,3}=w_{\alpha,4},
\:
\zeta_{1,1}=\zeta_{\alpha,1}=\zeta_{\alpha,2},
\:
\zeta_{1,3}=\zeta_{\alpha,3}=\zeta_{\alpha,4};
\]
\[
\xi_{1,i}=n_{\al_\pi}\xi_{\alpha,i}-n_\alpha\eta_{\alpha,i}, 
\qquad
\eta_{1,i}=d_\alpha\eta_{\alpha,i}-d_{\alpha_\pi}\xi_{\alpha,i},
\qquad
\hbox{if }~i=1,3;
\]
\[
\xi_{1,i}=n_\alpha\eta_{\alpha,i}-n_{\al_\omega}\xi_{\alpha,i},
\qquad
\eta_{1,i}=d_{\alpha_\omega}\xi_{\alpha,i}-d_{\alpha}\eta_{\alpha,i},
\qquad
\hbox{if }~i=2,4.
\]
\end{lemma}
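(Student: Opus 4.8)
The plan is to prove Lemma~\ref{ADICAO} by tracking how the coordinate charts composed through the tower of blow-ups transform, and then reading off the effect on the canonical logarithmic $1$-form. First I would reduce to the case $|\alpha|=k$ directly by induction on $k$, using the explicit gluing formulas for $M^{(\alpha)}$ and the explicit descriptions of $\pi^{\alpha_b}$ and $\pi^{\alpha_s}$ from the previous Lemma. The base case $k=2$ (i.e.\ $\alpha$ with $|\alpha|=2$, so $\alpha=1_b$ or $\alpha=1_s$) is a finite check: $\tau_2=\pi_2$, and one verifies the displayed formulas for $\zeta$, $w$, $\xi_{1,i},\eta_{1,i}$ against the known chart transition maps, using $n_1=d_1=1$, $n_{1_\omega}=d_{1_\pi}=0$ or the corresponding values. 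The point of including the continued-fraction bookkeeping at the start of the section is precisely that the numbers $n_\alpha,d_\alpha,e_\alpha$ obey the additive recursions $e_\alpha+e_{\alpha_\pi}=e_{\alpha_b}$, $e_\alpha+e_{\alpha_\omega}=e_{\alpha_s}$ (and the analogous ones for $n,d$), so the composite coordinate changes will telescope into the clean linear formulas stated.

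The core computation is the following. On $W_{\alpha,i}\cap\pi_{M^k}^{-1}(E^k)$ we have $u_{\alpha,i}=0$; the restriction of the canonical $1$-form \eqref{1FORM} to this locus is governed by the logarithmic residue structure (Lemma~\ref{RESIDUAL}). I would express the coordinates $(u_{1,1},v_{1,1},w_{1,1};\xi_{1,1}:\eta_{1,1}:\zeta_{1,1})$ of $W_{1,1}$ in terms of $(u_{\alpha,1},v_{\alpha,1},w_{\alpha,1};\xi_{\alpha,1}:\eta_{\alpha,1}:\zeta_{\alpha,1})$ by composing the transition maps of the previous Lemma down the chain $\alpha\to\alpha_\pi\to\cdots\to 1$ (for the $b$-side) or the appropriate chain on the $s$-side, pulling back the $1$-form $\xi_{1,i}\,du_{1,i}/u_{1,i}+\eta_{1,i}\,dv_{1,i}/v_{1,i}+\zeta_{1,i}\,dw_{1,i}$ at each stage. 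Because each elementary blow-up map has the monomial form $u'=u^{a}v^{b}w^{c}$ etc., the pullback of $du/u$ and $dv/v$ is a $\mathbb Z$-linear combination of $du/u$, $dv/v$, and the $dw$ term is unchanged; iterating, the $(\xi,\eta)$ pair transforms by the product of the corresponding $2\times2$ integer matrices, which — by the continued-fraction identities — is exactly the matrix with rows $(n_{\alpha_\pi},-n_\alpha)$, $(-d_{\alpha_\pi},d_\alpha)$ for $i=1,3$ and the stated analogue for $i=2,4$. That $w$ and $\zeta$ are literally preserved, and that the four charts $i=1,2$ (resp.\ $i=3,4$) land in $W_{1,1}$ (resp.\ $W_{1,3}$), follows from the chart-gluing formulas $w_{\alpha,2}=w_{\alpha,1}$, $w_{\alpha,4}=w_{\alpha,3}$ and the fact that on the exceptional locus the $v$-directions of charts $i$ and $i+1$ are interchanged by the coordinate inversion $v_{\alpha,2}=v_{\alpha,1}^{-1}$, which swaps the roles of $\alpha_\pi$ and $\alpha_\omega$ (hence the two distinct formulas).

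I expect the main obstacle to be purely bookkeeping: keeping the index conventions straight across the $b$-tower versus the $s$-tower and verifying that the telescoping of exponents $e_{(\cdot)}$ and of the numerator/denominator matrices really does collapse to the claimed two-line answer rather than something off by a transposition or a sign. The cleanest way to control this is to set up, once and for all, a $2\times2$ matrix $A_\alpha=\begin{pmatrix} n_{\alpha_\pi} & -n_\alpha\\ -d_{\alpha_\pi} & d_\alpha\end{pmatrix}$ for $i=1,3$ (and its mate for $i=2,4$), show $\det A_\alpha=\pm1$ using $n_\alpha d_{\alpha_\pi}-n_{\alpha_\pi}d_\alpha=\pm1$, and prove the recursion $A_{\alpha_b}=A_\alpha\cdot B$ (with $B$ the matrix of the single blow-up $\pi^{\alpha_b}$) by a direct substitution that invokes $e_\alpha+e_{\alpha_\pi}=e_{\alpha_b}$ exactly as in the displayed computation at the end of the previous Lemma's proof. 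Once the matrix recursion and the scalar identities $w_{1,i}=w_{\alpha,i}$, $\zeta_{1,i}=\zeta_{\alpha,i}$ are in hand, the Lemma follows by induction; no genuinely new geometric input is needed beyond Lemma~\ref{RESIDUAL} to know the residues are the right coordinates on the logarithmic bundle restricted to $E^k$.
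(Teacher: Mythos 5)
Your proposal is correct and follows essentially the same route as the paper: induct on $k$, pull back the canonical logarithmic $1$-form under a single elementary blow-up $\pi^{\alpha_b}$ (resp.\ $\pi^{\alpha_s}$) to get the unimodular substitution $\xi_{\alpha,i}=\xi_{\alpha_b,i}-\eta_{\alpha_b,i}$, $\eta_{\alpha,i}=\eta_{\alpha_b,i}$ (with $w,\zeta$ untouched), and then let the identities $n_{\alpha_b}=n_\alpha+n_{\alpha_\pi}$, $d_{\alpha_b}=d_\alpha+d_{\alpha_\pi}$, ${\alpha_b}_\pi=\alpha_\pi$ (and the $s$-analogues) telescope the composite into the stated formulas. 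Your $2\times2$ matrix packaging is just a notational reformulation of the paper's inductive computation, so no substantive difference.
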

\begin{proof}

If $i=1,3$, the pull-back by $\pi_{k+1}|_{W_{\alpha_b,i}}$
of (\ref{1FORM}) equals
\[
(\xi_{\alpha,i}+\eta_{\alpha,i})\frac{du_{\alpha_b,i}}{u_{\alpha_b,i}}+
\eta_{\alpha,i}\frac{dv_{\alpha_b,i}}{v_{\alpha_b,i}}+
\zeta_{\alpha_b,i}dw_{\alpha_b,i}.
\]
Hence $\pi_{k+1}|_{W_{\alpha_b,i}\cap \pi_{M^k}^{-1}(E^k)}$ is given by the relations 
\[
w_{\alpha,i}=w_{\alpha_b,i}
\qquad
\xi_{\alpha,i}=\xi_{\alpha_b,i}-\eta_{\alpha_b,i},
\qquad
\eta_{\alpha,i}=\eta_{\alpha_b,i},
\qquad
\zeta_{\alpha,i}=\zeta_{\alpha_b,i}.
\]
Therefore the restriction of $\tau_{k+1}$ to $W_{\alpha_b,i}\cap \pi_{M^{k+1}}^{-1}(E^{k+1})$ is given by
\[
\xi_{1,i}=
n_{\alpha_\pi}\xi_{\alpha,i}-n_\alpha\eta_{\alpha,i}=
n_{\alpha_\pi}\xi_{\alpha_b,i}-(n_\alpha+n_{\alpha_\pi})\eta_{\alpha_b,i}=
n_{{\alpha_b}_\pi}\xi_{\alpha_b,i}-n_{\alpha_b}\eta_{\alpha_b,i},
\]
\[
\eta_{1,i}=d_\alpha\eta_{\alpha,i}-d_{\alpha_\pi}\xi_{\alpha,i}=
(d_\alpha+d_{\alpha_\pi})\eta_{\alpha_b,i}+d_{\alpha_\pi}\xi_{\alpha_b,i}=
d_{\alpha_b}\eta_{\alpha_b,i}+d_{{\alpha_b}_\pi}\xi_{\alpha_b,i}
\]
and $w_{1,i}=w_{\alpha_b,i}$, $\zeta_{1,i}=\zeta_{\alpha_b,i}$.
\end{proof}

\noindent
There is a canonical embedding of $\mathbb P^*\langle E^k/Z^k\rangle$ into  $\mathbb P^*\langle M^k/N^k\rangle$.
Moreover,
\[
\pi^k(E^k) \subset Z^{k-1}, \qquad 
\pi_k(\mathbb P^*\langle E^k/Z^k\rangle) \subset \mathbb P^*\langle M^{k-1}/N^{k-1}\rangle \cap \pi_{M^{k-1}}^{-1}(Z^{k-1}).
\]
Hence $\tau_k(\mathbb P^*\langle E^k/Z^k\rangle) \subset \mathbb P^*\langle M^{1}/N^{1}\rangle \cap \pi_{M^1} ^{-1}(Z^1)$.
Therefore $\pi_0\circ\pi_1\circ\tau_k$ defines a map $\upsilon_k:\mathbb P^*\langle E^k/Z^k \rangle \to \Sigma^N_o$.

Set $U'_{\alpha,i}=E^k\cap U_{\alpha,i}$, $W'_{\alpha,i}=\mathbb P^*\langle E^k/Z^k\rangle \cap W_{\alpha,i}$. Notice that
\[
W'_{\alpha,i}=
\{ (u_{\alpha,i},v_{\alpha,i},w_{\alpha,i};\xi_{\alpha,i}:\eta_{\alpha,i}:\zeta_{\alpha,i}) 
: u_{\alpha,i}=\xi_{\alpha,i}=0  \}
\]
and 
$U'_{\alpha,i}=
\{ (u_{\alpha,i},v_{\alpha,i},w_{\alpha,i}) 
: u_{\alpha,i}=0  \}$.
Moreover,
\[
\eta_{\alpha,i}\frac{dv_{\alpha,i}}{v_{\alpha,i}}+
\zeta_{\alpha,i}dw_{\alpha,i}
\]
is the restriction to $W'_{\alpha,i}$ of the canonical $1$-form of $T^*\langle E^k/Z^k\rangle$.

\begin{lemma}
If  $|\alpha|=k$,   $\upsilon_k$ is given by
\[
\eta=(-1)^{i+1}e_\alpha\eta_{\alpha,i}-w_{\alpha,i}\zeta_{\alpha,i},
\qquad
\zeta=\zeta_{\alpha,i},
\qquad
\hbox{if } ~i=1,2,
\]
\[
\zeta=(-1)^{i+1}e_\alpha\eta_{\alpha,i}-w_{\alpha,i}\zeta_{\alpha,i},
\qquad
\eta=\zeta_{\alpha,i},
\qquad
\hbox{if } ~i=3,4.
\]
\end{lemma}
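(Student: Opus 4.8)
The plan is to compute $\upsilon_k=\pi_0\circ\pi_1\circ\tau_k$ one affine chart $W'_{\alpha,i}$ at a time, building the answer from two pieces: the behaviour of $\pi_0\circ\pi_1$ near the exceptional set of $M^1$, and Lemma \ref{ADICAO} for $\tau_k$. First I would record that every point of $\mathbb P^*\langle E^k/Z^k\rangle$ lies over $E^k$ (so $u_{\alpha,i}=0$) with vanishing residue along $E^k$ (so $\xi_{\alpha,i}=0$), and that by Lemma \ref{ADICAO} the map $\tau_k$ sends $W_{\alpha,i}\cap\pi_{M^k}^{-1}(E^k)$ into $W_{1,1}\cap\pi_{M^1}^{-1}(Z^1)$ for $i=1,2$ and into $W_{1,3}\cap\pi_{M^1}^{-1}(Z^1)$ for $i=3,4$. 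So it suffices to know $\upsilon_1=\pi_0\circ\pi_1$ on those two loci and then substitute the coordinate formulas of Lemma \ref{ADICAO} with $\xi_{\alpha,i}=0$. Since the image then lies over $o$ and its residue along $N$ will be seen to vanish, it lands in $\mathbb P^*_oN=\Sigma^N_o$ and is recorded by the pair $(\eta:\zeta)$, which is why the statement only lists $\eta$ and $\zeta$.

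For the first piece I would make $\pi^0\circ\pi^1$ explicit in each of the four charts $U_{1,i}$. Since $\pi^1$ is the blow-up of $Z^0=\widetilde N\cap E^0$ inside the chart $V_2$ (resp.~$V_3$) of $M^0$ from the proof of Lemma \ref{PCIRC}, in $U_{1,1}$ one obtains $x=u_{1,1}^2 v_{1,1}$, $y=u_{1,1}v_{1,1}$, $z=u_{1,1}v_{1,1}w_{1,1}$, and analogously in the other charts, with the two blow-up coordinates interchanged in $U_{1,2}$ and with the $y$- and $z$-directions swapped in $U_{1,3},U_{1,4}$ because those sit over $V_3$. Pulling back $\xi\frac{dx}{x}+\eta\,dy+\zeta\,dz$ through these substitutions, reading off the fibre coordinates of $T^*\langle M^1/N^1\rangle$ from (\ref{1FORM}), and then restricting to $E^1=\{u_{1,i}=0\}$ and to $Z^1=\{u_{1,i}=v_{1,i}=0\}$, produces $\upsilon_1$ on $W_{1,i}\cap\pi_{M^1}^{-1}(Z^1)$; over $W_{1,1}$ one finds $\xi=0$, $\eta=2\eta_{1,1}-\xi_{1,1}-w_{1,1}\zeta_{1,1}$, $\zeta=\zeta_{1,1}$, consistently with (\ref{PIZERO}) for $\pi_{0,2}$. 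This settles the case $k=1$, where $\tau_1$ is the identity, $\xi_{1,i}=0$, and $e_1=n_1+d_1=2$.

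For $k\ge 2$ I would plug the formulas of Lemma \ref{ADICAO} for $\tau_k$ on $W_{\alpha,i}\cap\pi_{M^k}^{-1}(E^k)$ (with $\xi_{\alpha,i}=0$) into the expressions just obtained. The relations $w_{1,i}=w_{\alpha,i}$, $\zeta_{1,i}=\zeta_{\alpha,i}$ pass through unchanged, giving $\zeta=\zeta_{\alpha,i}$ and the summand $-w_{\alpha,i}\zeta_{\alpha,i}$; the remaining contribution is a linear combination of $n_\alpha,d_\alpha$ (for $i=1,3$) or of $n_{\alpha_\omega},d_{\alpha_\omega}$ (for $i=2,4$) times $\eta_{\alpha,i}$, and the job is to collapse it to the single coefficient $e_\alpha=n_\alpha+d_\alpha$. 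The sign $(-1)^{i+1}$ for $i=2,4$ comes from the gluing $v_{\alpha,2}=v_{\alpha,1}^{-1}$ (resp.~$v_{\alpha,4}=v_{\alpha,3}^{-1}$), which reverses the residue, and $\eta$ and $\zeta$ exchange roles for $i=3,4$ by the swap of $y$ and $z$ already noted.

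The step I expect to be the real work is this last collapse: checking, simultaneously in all four charts and with both the $\pi$- and the $\omega$-branch of Lemma \ref{ADICAO}, that the residue contributions sum to exactly $e_\alpha\eta_{\alpha,i}$ and that every sign matches. This is where the continued-fraction identities set up at the beginning of the section have to be used (in particular $e_\alpha=n_\alpha+d_\alpha$, $e_\alpha+e_{\alpha_\pi}=e_{\alpha_b}$, $e_\alpha+e_{\alpha_\omega}=e_{\alpha_s}$, ${\alpha_b}_\pi=\alpha_\pi$, ${\alpha_s}_\omega=\alpha_\omega$), and it is the exact analogue of the computation carried out for curves in \cite{CNP}.
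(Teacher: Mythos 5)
Your strategy coincides with the paper's: restrict to $W'_{\alpha,i}$, where $u_{\alpha,i}=\xi_{\alpha,i}=0$, use Lemma \ref{ADICAO} to describe $\tau_k$ there, and then compose with the explicit chart formulas for $\pi_1$ and $\pi_0$. Your computation of $\upsilon_1$ on $W_{1,1}$ (namely $\xi=0$, $\eta=2\eta_{1,1}-\xi_{1,1}-w_{1,1}\zeta_{1,1}$, $\zeta=\zeta_{1,1}$, consistent with (\ref{PIZERO})) is correct, reproduces the paper's two displayed formulas for $\pi_0$ and $\pi_1$, and does settle the case $k=1$.

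The problem is in the step you defer as the real work: it does not go through as you set it up. Taking Lemma \ref{ADICAO} at face value on $W'_{\alpha,1}$ gives $\xi_{1,1}=-n_\alpha\eta_{\alpha,1}$, $\eta_{1,1}=d_\alpha\eta_{\alpha,1}$, and substituting this into your $\upsilon_1$ yields $\eta=(n_\alpha+2d_\alpha)\eta_{\alpha,1}-w_{\alpha,1}\zeta_{\alpha,1}$; since $n_\alpha+2d_\alpha=e_\alpha+d_\alpha$, no continued-fraction identity can collapse this to $e_\alpha\eta_{\alpha,1}$. The discrepancy is an off-by-one in the bookkeeping: a direct check at $\alpha=2$, using $u_{1,1}=u_{2,1}$, $v_{1,1}=u_{2,1}v_{2,1}$, gives $\xi_{1,1}=\xi_{2,1}-\eta_{2,1}$ rather than $\xi_{2,1}-2\eta_{2,1}$, while the coefficient asserted by the present lemma ($e_2=3$) is confirmed; so the displayed formula of Lemma \ref{ADICAO} has to be read as already incorporating the substitution effected by $\pi_1$, i.e.\ as expressing $(\xi_2,\eta_2)$, resp.\ $(\xi_3,\eta_3)$, and composing it with $\pi_1$ once more inside $\upsilon_1$, as your plan does (and as the paper's own terse proof, read literally, also does), double counts one blow-up. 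Once this is repaired --- compose Lemma \ref{ADICAO} with $\pi_0$ alone, or recompute $\pi_1\circ\tau_k$ by the same induction --- the collapse is immediate and uses only $e_\alpha=n_\alpha+d_\alpha$; the identities $e_\alpha+e_{\alpha_\pi}=e_{\alpha_b}$ and $e_\alpha+e_{\alpha_\omega}=e_{\alpha_s}$ are consumed inside the proof of Lemma \ref{ADICAO}, not at this stage. A smaller slip: for $i=2,4$ the coefficients surviving on $W'_{\alpha,i}$ are $n_\alpha$ and $d_\alpha$ (the $\omega$-subscripted ones multiply $\xi_{\alpha,i}=0$), not $n_{\alpha_\omega}$ and $d_{\alpha_\omega}$.
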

\begin{proof}
Assume $\alpha>1$.
Let
\[
\xi\frac{dx}{x}+\eta dy+\zeta dz, 
\quad
\xi_2\frac{dx_2}{x_2}+\eta_2 \frac{dy_2}{y_2}+\zeta_2 dz_2,
\quad
\xi_3\frac{dx_3}{x_3}+\eta_3 \frac{dy_3}{y_3}+\zeta_3 dz_3
\]
be the canonical $1$-form of $T^*\langle M/N\rangle$,
the restriction to $\pi_{M^0}^{-1}(V_i)$ of the canonical $1$-form of
$T^*\langle M^0/N^0\rangle$, $i=2,3$.
By Lemma \ref{ADICAO} the restriction of $\tau_k$ to $W'_{\alpha,i}$ is given by
\[
w_{1,i}=w_{\alpha,i},
\qquad
\xi_{1,i}=(-1)^{i}n_\alpha\eta_{\alpha,i},
\qquad
\eta_{1,i}=(-1)^{i+1}d_\alpha\eta_{\alpha,i},
\qquad
\zeta_{1,i}=\zeta_{\alpha,i},
\]
i=1,2,3,4. The result follows from the fact that the restriction of $\pi_0$ to $\mathbb P^*\langle M^{0}/N^{0}\rangle \cap \pi_{M^0} ^{-1}(E^0)$ is given by
\[
\eta=\eta_2-\xi_2-z_2\zeta_2, ~~~
\zeta=\zeta_2;
\qquad
\eta=\zeta_3, ~~~
\zeta=\eta_3-\xi_3-z_3\zeta_3, 
\]
and $\pi_1$ is given by
\[
z_2=w_{1,1},
\xi_2=\xi_{1,1}-\eta_{1,1},
\eta_2=\eta_{1,1},
\zeta_2=\zeta_{1,1};
\]
\[
z_2=w_{1,2}, 
\xi_2=\eta_{1,2}, 
\eta_2=\xi_{1,2}-\eta_{1,2}, 
\zeta_2=\zeta_{1,2};
\]
\[
z_3=w_{1,3},
\xi_3=\xi_{1,3}-\eta_{1,3},
\eta_3=\eta_{1,3}, 
\zeta_3=\zeta_{1,3};
\]
\[
z_3=w_{1,4},
\xi_3=\eta_{1,4},
\eta_3=\xi_{1,4}-\eta_{1,4}, 
\zeta_3=\zeta_{1,4}.
\]
The proof in the case $\alpha<1$ is similar.
Remark that $e_{\alpha^{-1}}=e_\alpha$.
\end{proof}

\begin{lemma}\label{CURVE}
For each $\alpha$ and each curve $C$ of $E^{(\alpha)}$,
$C$ intersects $Z^\alpha$.
\end{lemma}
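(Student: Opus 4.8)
The plan is to identify the surface $E^{(\al)}$ with a Hirzebruch surface and then dispose of the statement by an elementary intersection computation on that surface.

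First I would make $E^{(\al)}$ explicit. Restricting the charts $U_{\al,i}$ and the transition rules above to $\{u_{\al,i}=0\}$, the surface $E^{(\al)}$ is obtained by gluing the four affine planes $E^{(\al)}\cap U_{\al,i}$, with coordinates $(v_{\al,i},w_{\al,i})$, by
\[
w_{\al,2}=w_{\al,1},\qquad w_{\al,i}=w_{\al,i-2}^{-1}\ (i=3,4),\qquad v_{\al,2}=v_{\al,1}^{-1},\qquad v_{\al,3}=v_{\al,1}w_{\al,1}^{e_\al},\qquad v_{\al,4}=v_{\al,2}w_{\al,2}^{-e_\al}.
\]
Hence $\pi^{(\al)}\colon(v_{\al,i},w_{\al,i})\mapsto w_{\al,i}$ realizes $E^{(\al)}$ as a $\mathbb P^1$-bundle over $Z^\al\cong\mathbb P^1$, with fibre class $f$, $f\cdot f=0$; it is the Hirzebruch surface with fibrewise clutching $v\mapsto w^{e_\al}v$. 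The coordinate sections $Z^\al_b=\{v_{\al,i}=0\ (i=1,3)\}$ and $Z^\al_s=\{v_{\al,i}=0\ (i=2,4)\}$ are disjoint, and from $v_{\al,3}=v_{\al,1}w_{\al,1}^{e_\al}$ one computes that the normal bundle of $\{v_{\al,1}=0\}$ has degree $-e_\al$; since $e_\al=n_\al+d_\al\ge 2$, one of $Z^\al_b,Z^\al_s$ has self-intersection $-e_\al$ and is the unique irreducible curve of negative self-intersection in $E^{(\al)}$, while the other has self-intersection $+e_\al$. Writing $Z_-$ and $Z_+$ for these two sections, one has $\{Z_-,Z_+\}=\{Z^\al_b,Z^\al_s\}$, $\mathrm{Pic}(E^{(\al)})=\mathbb Z Z_-\oplus\mathbb Z f$ and $Z_+\equiv Z_-+e_\al f$.

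Then I would take an arbitrary irreducible curve $C$ of $E^{(\al)}$ and argue by cases. If $C$ is a fibre of $\pi^{(\al)}$, then $C\cdot Z_-=f\cdot Z_-=1$, so $C$ meets $Z_-$. If $C=Z_-$ or $C=Z_+$ there is nothing to prove. In the remaining case $C$ is not a fibre and $C\ne Z_-$, so writing $C\equiv aZ_-+bf$ we have $a=C\cdot f\ge 1$ (because $C$ dominates $Z^\al$) and $C\cdot Z_-\ge 0$ (two distinct irreducible curves on a smooth surface), i.e. $b-ae_\al\ge 0$, hence $b\ge ae_\al\ge e_\al\ge 2$. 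Consequently
\[
C\cdot Z_+=(aZ_-+bf)\cdot(Z_-+e_\al f)=-ae_\al+ae_\al+b=b>0,
\]
so $C$ meets $Z_+$. In every case $C$ meets $Z^\al_b\cup Z^\al_s=Z^\al$.

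The only delicate step is the first one: reading off from the raw gluing data that $E^{(\al)}$ is the Hirzebruch surface $\mathbb F_{e_\al}$ with $Z^\al_b$ and $Z^\al_s$ as its sections of self-intersection $\mp e_\al$. In particular I would check that the possibly degenerate exponents $e_{\al_\pi}$ and $e_{\al_\omega}$ (which equal $e_\infty=1$ when $\al$ is an integer) intervene only in the transition rules for the $u_{\al,i}$, i.e. in the normal direction of $E^{(\al)}$ inside $M^{(\al)}$, and not in those for $(v_{\al,i},w_{\al,i})$, so the internal geometry of $E^{(\al)}$ is exactly that of $\mathbb F_{e_\al}$. The hypothesis $e_\al\ge 2>0$ is precisely what makes $Z_-$ a genuine negative curve, hence forces the bound $b\ge ae_\al$; once the identification and this bound are in hand, the intersection computation above is immediate.
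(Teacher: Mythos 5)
Your proof is correct, and it takes a genuinely different route from the paper's. The paper argues directly with the defining polynomial of $C$ in the four charts: writing $C\cap U'_{\alpha,3}=\{\sum_{i=0}^{\ell}a_i(w_{\alpha,3})v_{\alpha,3}^i=0\}$, the assumption that $C$ misses $\{v_{\alpha,1}=0\}\cup\{v_{\alpha,3}=0\}$ forces $a_0\in\mathbb C^*$, and then transporting the equation to $U'_{\alpha,2}$ and $U'_{\alpha,4}$ the assumption that $C$ also misses $\{v_{\alpha,2}=0\}\cup\{v_{\alpha,4}=0\}$ forces both $a_\ell(t)=\lambda t^{e_\alpha\ell}$ and $a_\ell\in\mathbb C^*$, a contradiction since $\ell\ge 1$. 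Your argument replaces this bookkeeping by the identification $E^{(\alpha)}\cong\mathbb F_{e_\alpha}$ and the classification of irreducible curve classes there; your inequality $b\ge ae_\alpha$ is exactly the intersection-theoretic shadow of the paper's conclusion $a_\ell(t)=\lambda t^{e_\alpha\ell}$ (degree of the top coefficient equals $e_\alpha$ times the $v$-degree). What your approach buys is conceptual clarity: it isolates $e_\alpha>0$ as the reason the lemma holds (on $\mathbb F_0$ the constant sections would be counterexamples), and the same framework makes the well behaved curves of Lemma \ref{WELLB} transparent — they are the fibres and the sections in the class $Z_-+e_\alpha f$, disjoint from $Z_-$ and tangent to $Z_+$ to order $e_\alpha$. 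What the paper's computation buys is that it never leaves the explicit coordinates that the rest of Section \ref{SSB} needs anyway. One notational point: the paper uses $Z^\alpha$ both for the base of $\pi^{(\alpha)}$ and, in this lemma, for what must be $Z^{\alpha_b}\cup Z^{\alpha_s}$; your reading $Z^\alpha=Z^\alpha_b\cup Z^\alpha_s$ is the one the paper's own proof implements, so this is a quirk of the source, not a gap in your argument.
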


\begin{proof}
Assume that $C$ does not intersect  $Z^\alpha$.
The intersection of $C$ with $U'_{\alpha,3}$ is defined by a polynomial
$\sum_{i=0}^\ell a_i(w_{\alpha,3})v_{\alpha,3}^i$. Hence $a_0\in\mathbb C^*$.

There is an integer $\mu\ge 0$ such that $C\cap U'_{\alpha,1}$ is given by the polynomial
\[
w_{\alpha,1}^\mu (\sum_{i=0}^\ell a_i(w_{\alpha,1}^{-1})w_{\alpha,1}^{e_\alpha i}v_{\alpha,1}^i).
\]
Since $C$ does not intersect  $Z^\alpha$, $\mu=0$.
The intersection of $C$ with $U'_{\alpha,2}$ is defined by 
\[
\sum_{i=0}^\ell a_i(w_{\alpha,2}^{-1})w_{\alpha,2}^{e_\alpha i}v_{\alpha,2}^{\ell-i}.
\]  
Hence there is $\lambda\in\mathbb C^*$ such that $a_\ell(t)=\lambda t^{e_\alpha \ell}$.
Finally, $C\cap U'_{\alpha,4}$ is given by
$\sum_{i=0}^\ell a_i(w_{\alpha,4})v_{\alpha,4}^{\ell-i}$. Therefore $a_\ell\in\mathbb C^*$, which leads to a contradiction.
\end{proof}

\begin{lemma}\label{WELLB}
Let $C$ be an irreducible curve of $E^{(\alpha)}$. 
The image by $\upsilon_k$ of $\Gamma=\mathbb P^*_C\langle E^k/N^k\rangle$ is different from
$\Sigma_o^N$ if and only if $C$ is well behaved
\end{lemma}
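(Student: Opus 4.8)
The statement concerns the image under $\upsilon_k$ of the conormal $\Gamma = \mathbb P^*_C\langle E^k/N^k\rangle$ of an irreducible curve $C$ of $E^{(\alpha)}$, and asks us to characterise when this image fills up all of $\Sigma_o^N$. By Lemma~\ref{CURVE} the curve $C$ must meet $Z^\alpha$, so we are in one of three mutually exclusive situations: $C$ is a fiber of $\pi^{(\alpha)}$; $C$ is the graph of a section of $\pi^{(\alpha)}$; or $C$ is some other curve meeting $Z^\alpha$ transversally or with higher tangency. The plan is to compute $\upsilon_k(\Gamma)$ directly in each case using the explicit formula for $\upsilon_k$ proved in the previous lemma, namely
\[
\eta=(-1)^{i+1}e_\alpha\eta_{\alpha,i}-w_{\alpha,i}\zeta_{\alpha,i},\qquad \zeta=\zeta_{\alpha,i}\qquad (i=1,2),
\]
together with the relation $u_{\alpha,i}=\xi_{\alpha,i}=0$ cutting out $\mathbb P^*\langle E^k/Z^k\rangle$.

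**First case: $C$ well behaved.** If $C$ is a fiber of $\pi^{(\alpha)}$, say $\{w_{\alpha,i}=c\}$ in the chart $i=1$, then its conormal $\Gamma$ is parametrised by $v_{\alpha,1}$ free, $w_{\alpha,1}=c$, $\eta_{\alpha,1}=0$, $\zeta_{\alpha,1}$ free; plugging into the formula gives $(\eta:\zeta)=(-c\zeta_{\alpha,1}:\zeta_{\alpha,1})=(-c:1)$, a single point, hence the image is a single point of $\Sigma_o^N$, certainly not all of it. If instead $C$ is the graph of a section with $C\cap Z^\alpha_b=\emptyset$ and meeting $Z^\alpha_s$ at one point with multiplicity $e_\alpha$, I would write the equation of $C$ in the four charts $U'_{\alpha,i}$ exactly as in the proof of Lemma~\ref{CURVE} — a polynomial $a_0 + \dots + a_\ell v_{\alpha,i}^\ell$ with $a_0\in\mathbb C^*$ and $a_\ell(t)=\lambda t^{e_\alpha\ell}$ — differentiate to get the defining equation of the conormal, and read off $(\eta_{\alpha,i}:\zeta_{\alpha,i})$ along $C$. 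The well-behavedness constraints force the resulting map $C\to\Sigma_o^N$ to factor through a proper subvariety (I expect a single point, because a section has a one-dimensional conormal whose image is one-dimensional a priori, but the logarithmic vanishing along $N$ and the multiplicity condition collapse it). So in all well-behaved cases $\upsilon_k(\Gamma)\neq\Sigma_o^N$.

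**Converse: $C$ not well behaved.** Here I must exhibit that $\upsilon_k(\Gamma)=\Sigma_o^N$, i.e. the image is two-dimensional and equals the whole pencil. If $C$ meets $Z^\alpha$ but is neither a fiber nor a graph of a section — equivalently the restriction $\pi^{(\alpha)}|_C$ is a covering of degree $\ge 2$, or $C$ is a section failing one of the intersection conditions with $Z^\alpha_b, Z^\alpha_s$ — then writing $C$ again as $\sum a_i(w) v^i$ we have $\deg_v \ge 2$, or $a_0$ or $a_\ell$ fails the required form. In that situation the conormal $\Gamma$ has branches over which $v_{\alpha,i}$ and $w_{\alpha,i}$ are genuinely independent coordinates, and the two quantities $\eta_{\alpha,i}$, $\zeta_{\alpha,i}$ (subject only to the Legendrian/contact relation $\xi_{\alpha,i}=0$) sweep out a full $\mathbb P^1$ of directions as $w_{\alpha,i}$ varies; the affine-linear formula $\eta=\pm e_\alpha\eta_{\alpha,i}-w_{\alpha,i}\zeta_{\alpha,i}$, $\zeta=\zeta_{\alpha,i}$ is then surjective onto $\mathbb P^*_oN=\Sigma_o^N$. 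The cleanest way to organise this is to take limits along one-parameter families inside $\Gamma$, exactly as in the proofs of Lemmas~\ref{TRIVIALTRANSVERSAL} and~\ref{NONTRIVCONE}: pick a branch of $C$ not of the special form, parametrise it, compute the conormal direction, and show the closure of its image is all of $\Sigma_o^N$.

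**Main obstacle.** The delicate point is the boundary case where $C$ \emph{is} a section of $\pi^{(\alpha)}$ but violates one of the two intersection conditions (it meets $Z^\alpha_b$, or meets $Z^\alpha_s$ with the wrong multiplicity). Then a naive dimension count still gives a one-dimensional image, so surjectivity onto $\Sigma_o^N$ cannot come from dimension alone; one has to show the image is not a single point and, being a one-dimensional constructible set whose closure must be all of the pencil, actually fills $\Sigma_o^N$ — which forces me to track carefully how $a_0$ or $a_\ell$ degenerate and to verify that the corresponding conormal direction $w_{\alpha,i}$ is non-constant along $C$. I expect to handle this by analysing the behaviour of $C$ near its intersection points with $Z^\alpha_b$ and $Z^\alpha_s$ in the charts $U'_{\alpha,3}$ and $U'_{\alpha,2},U'_{\alpha,4}$, using the gluing formulas $v_{\alpha,3}=v_{\alpha,1}w_{\alpha,1}^{e_\alpha}$, $u_{\alpha,3}=u_{\alpha,1}w_{\alpha,1}^{-e_{\alpha_\pi}}$ etc., and showing that failure of either condition produces a pole or ramification that makes the conormal direction vary, hence its image dense in $\Sigma_o^N$.
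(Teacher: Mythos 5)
There is a genuine gap: your proposal sets up the right framework (the explicit formula for $\upsilon_k$, the case split fiber / section / other via Lemma~\ref{CURVE}, branch parametrizations), and the fiber case is done correctly, but both nontrivial implications are left as expectations rather than proved, and the heuristic you offer for the converse is actually false. The whole content of the lemma is a cancellation computation that you never perform. Writing a branch of $C$ through $Z^k$ as $v_{\alpha,i}=t^a$, $w_{\alpha,i}=c+t^b\varepsilon(t)$ with $\varepsilon$ a unit, the conormal has $p_i=(t^b/a)(b\varepsilon+t\varepsilon')$, so the image germ is
\[
p=c+t^b\bigl[\bigl((-1)^{i+1}e_\alpha (b/a)+1\bigr)\varepsilon+(-1)^{i+1}(e_\alpha/a)\,t\varepsilon'\bigr],
\]
and this is a single point if and only if $\varepsilon$ satisfies $t\varepsilon'+\lambda\varepsilon=0$, which for a unit forces $\varepsilon$ constant and $a=(-1)^{i}b\,e_\alpha$; positivity then forces $i$ even, i.e.\ the branch sits over $Z^\alpha_s$ and has the exact local form $v_{\alpha,i}=(\mu w_{\alpha,i}+\nu)^{e_\alpha}$. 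This single computation simultaneously proves that a well-behaved section (multiplicity exactly $e_\alpha$ at one point of $Z^\alpha_s$, disjoint from $Z^\alpha_b$) has point image — the factor $e_\alpha$ in the multiplicity cancels the factor $e_\alpha$ in $p=(-1)^{i+1}e_\alpha p_i+w_{\alpha,i}$ — and that any other curve (wrong multiplicity, wrong side, or degree $\ge 2$ over $Z^\alpha$) has non-constant, hence full, image. Your claim in the well-behaved case that the constraints ``collapse'' the image to a point is precisely what needs this cancellation; and your converse heuristic, that when $v$ and $w$ are ``genuinely independent'' the directions $(\eta_{\alpha,i}:\zeta_{\alpha,i})$ ``sweep out a full $\mathbb P^1$'', is incorrect as stated: the conormal direction is determined pointwise by the tangent of $C$, and there are curves along which $w$ varies yet the image is still one point — namely the well-behaved sections themselves. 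So the dichotomy cannot be obtained by a dimension or independence argument; it is exactly the ODE/non-cancellation condition above, which also settles your ``main obstacle'' (a section meeting $Z^\alpha_s$ with multiplicity $\neq e_\alpha$, or meeting $Z^\alpha_b$, breaks $a=be_\alpha$ or the parity of $i$, so the bracket is nonzero and the compact image is all of $\Sigma_o^N$).

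A secondary omission: to pass from the local normal form at the intersection point to the global description of a well-behaved section, the paper uses that the restriction of $\pi^{(\alpha)}$ to $E^{(\alpha)}\setminus Z^\alpha_s$ is a line bundle of degree $e_\alpha$, so a section vanishing to order $e_\alpha$ at one point is globally of type $v_{\alpha,i}=(\mu w_{\alpha,i}+\nu)^{e_\alpha}$; your plan to track $a_0,\dots,a_\ell$ through the four charts (as in Lemma~\ref{CURVE}) could substitute for this, but it is only announced, not carried out.
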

\begin{proof}
Assume $\upsilon_k(\Gamma)$ is different from $\Sigma_o^N$.
Set 

\noindent
$p=-\eta\zeta^{-1}, ~  p_i=-\eta_{\alpha,i}\zeta_{\alpha,i}^{-1}$, 
if  $i=1,2$; 

\noindent
$p=-\zeta\eta^{-1}, ~  p_i=-\eta_{\alpha,i}\zeta_{\alpha,i}^{-1}$, 
if $i=3,4$.

\noindent
The restriction of $\upsilon_k$ to $W'_{\alpha,i}$ is given by
\[
p=(-1)^{i+1}e_\alpha p_i+w_{\alpha,i}.
\] 

By Lemma \ref{CURVE}, $C$ intersects $Z^k$ at a point $o_1$. 
Let $C_1$ be a branch of the germ of $C$ at $o_1$.
Then  $C_1=\{ w_{\alpha,i}=c\}$ or $C_1$ admits one of the following parametrizations

\begin{equation}\label{PARA}
v_{\alpha,i}=t^a, 
\quad
w_{\alpha,i}=c+t^b\varepsilon;
\qquad
v_{\alpha,i}=t^b\varepsilon,
\quad
w_{\alpha,i}=c+t^a;
\end{equation}

where $b\geq a$ and $\varepsilon$ is a unit of $\mathbb C\{t\}$. 
In the first case $C_1$ is well behaved.
Assume $C_1$ admits the first parametrization.
Setting $\Gamma_1=\mathbb P^*_{C_1}\langle E^k/Z^k\rangle$,
 $\Gamma_1$ admits a local parametrization given by (\ref{PARA}) and 
$p_i=(t^b/a)(b\varepsilon+t\varepsilon')$.
Therefore $\upsilon_k(\Gamma_1)$ contains the set of points $p$ such that
\[
p=c+t^b[((-1)^{i+1}e_\alpha (b/a)+1)\varepsilon+(-1)^{i+1}e_\alpha (1/a) t\varepsilon']
\]
and $|t|<<1$.

This set is finite if and only if $\varepsilon$ is the solution of an ODE $t\varepsilon'+\lambda\varepsilon=0$.
Since $\varepsilon$ is a unit, $\lambda=0$. Hence 
$a=(-1)^{i}be_\alpha$.
Since $a,b,e_\alpha$ are positive, $i$ is even. 
Hence $C$ cannot intersect $Z^\alpha_s$. Moreover,
$C\cap U'_{\alpha,i}$ is described by an equation of the type
\begin{equation}\label{SECTION}
v_{\alpha,i}=(\mu w_{\alpha,i}+\nu)^{e_\alpha},
\end{equation}
 Hence $C_1$ is the graph of a section of $\pi^{(\alpha)}$.
 The remaining case can be treated in a similar way.
Remark that in each case $C=C_1$.

Let $C$ be a section of $\pi^{(\alpha)}$ verifying the statements of the lemma.
Then $C$ is a section of the restriction  $\pi^{[\alpha]}$ of $\pi^{(\alpha)}$ to $E^{(\alpha)}\setminus Z^\alpha_s$.
Since $\pi^{[\alpha]}$ is a line bundle of degree $e_\alpha$ and $C$ has  a zero of order $e_\alpha$, $C$  is of the type (\ref{SECTION}).
\end{proof}

\begin{proof}[Proof of Theorem \ref{SECONDSEQ}]
If $(a)$ $[(b),(c)]$ holds it follows from Lemma \ref{NONTRIVCONE} 
[Lemma \ref{WELLB}, Theorems \ref{SIGMAAB} and \ref{TRIVCONE}]
that $\Sigma_o^N(S)=\Sigma_o^N$.

Let $\pi_k:M_k\to M$ be the second sequence of blow ups.
Let $\ell\leq k$ and let $F_\ell$ be an irreducible component of $N_\ell$.
Let $F'_\ell$ be the intersection of $F_\ell$ with the regular part of $N_\ell$.

Assume $(a),(b),(c)$ do not hold.
Since $(c)$ does not hold, the closure of 
\[
\mathbb P^*_{S_k}\langle M_k/N_k\rangle\cap 
\pi_{M_k}^{-1}(F'_k)
\]
is the closure of $\mathbb P^*_{S_k\cap F'_k}F'_k$.
We show by induction in $\ell$, using theorems \ref{SIGMAAB} and \ref{TRIVCONE} that
\[
\mathbb P^*_{S_{k-\ell}}\langle M_{k-\ell}/N_{k-\ell}\rangle\cap 
\pi_{M_{k-\ell}}^{-1}(F_{k-\ell})
\]
is the closure of $\mathbb P^*_{S_{k-\ell}\cap F'_{k-\ell}}F'_{k-\ell}$,
for each $\ell\le k$.

Since $(a)$ does not hold, it follows from Theorem \ref{TRIVCONE} that $\pi_1(\mathbb P^*_{S_1\cap F_1}F_1)$ is finite.
Since $(b)$ does not hold, it follows from Lemma \ref{WELLB} that $\upsilon_\ell(\mathbb P^*_{S_\ell\cap F_\ell}F_\ell)$ is finite, 
for  $2\le \ell \le k$.
\end{proof}

\section{Main Results}\label{MRE}

\begin{blank}\label{MAINALGORITHM}\em
Let $N_0$ be a smooth surface of a germ of a manifold $M_0$ of
dimension $3$ at a point $o$.
Let $S_0$ be a surface of $M_0$ that does not contain $N_0$.

Let $N_k$ be a normal crossings divisor of a manifold $M_k$ of dimension $3$.
Let $S_k$ be a singular surface of $M_k$ that does not contains any irreducible component of $N_k$.
Assume Sing$^{N_k}(S_k)$ does not intersect the singular locus of $N_k$.

Let $\sigma\in$Sing$^{N_k}(S_k)\cap N_k$.
Assume the germ of Sing$^{N_k}(S_k)\cap N_k$
at $\sigma$ is not smooth or is not transversal to $N_k$.
If $S_k$ is non degenerated at $\sigma$, we blow up $M_k$ at $\sigma$ followed by the first sequence of blow ups.
Otherwise we perform the second sequence of blow ups at $\sigma$.
After modifying $M_k$ at each point of the finite set Sing$^{N_k}(S_k)\cap N_k$,
we obtain a map $\pi_{k+1}:M_{k+1}\to M_k$.
Set $N_{k+1}=\pi_{k+1}^{-1}(N_k)$.
Let $S_{k+1}$ be the strict transform of $S_k$ by $\pi_{k+1}$.
Applying, accordingly,  the first sequence or the second sequence of blow ups, we guarantee that 
 Sing$^{N_{k+1}}(S_{k+1})$ does not intersect the singular locus of $N_{k+1}$.
\end{blank}

\begin{lemma}\label{DESING}
There  is an integer $k$ such that each connected component
 of Sing$^{N_k}(S_k)$ is a smooth curve transversal to $N_k$. 
 Hence the procedure described in paragraph \ref{MAINALGORITHM} 
 will terminate after a finite number of steps.
\end{lemma}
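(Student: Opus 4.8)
The plan is to attach to each stage $(S_k,N_k)$ of the procedure of paragraph \ref{MAINALGORITHM} an invariant with values in a well-ordered set which strictly decreases whenever the procedure performs a modification. Write $C_k=\mathrm{Sing}^{N_k}(S_k)$; every irreducible component of $C_k$ is contained in $\mathrm{Sing}(S_k)$, so the generic multiplicity of $S_k$ along it is an integer $\ge 2$. Set $m_0=m_o(S_0)$ and, for $2\le m\le m_0$, let $C_k^{(m)}$ be the union of the components of $C_k$ along which $S_k$ has generic multiplicity exactly $m$, and let $\nu_k^{(m)}$ be an invariant of $C_k^{(m)}$ relative to the divisor $N_k$ measuring how far $C_k^{(m)}$ is from being a smooth curve transversal to $N_k$ --- a reasonable candidate is a lexicographic pair whose first entry counts the points of $C_k^{(m)}\cap N_k$ at which $S_k$ is degenerate and whose second entry is a classical embedded-resolution (Enriques-type) invariant of the curve $C_k^{(m)}$ relative to $N_k$, so that $\nu_k^{(m)}=0$ exactly when every component of $C_k^{(m)}$ is smooth and transversal to $N_k$. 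These are precisely the points at which the procedure of paragraph \ref{MAINALGORITHM} acts. Put $\nu_k=(\nu_k^{(m_0)},\nu_k^{(m_0-1)},\dots,\nu_k^{(2)})$, ordered lexicographically with the top multiplicity first; if every component of $C_k$ is a smooth curve transversal to $N_k$ then $\nu_k=0$ and the procedure halts, so it suffices to show that otherwise one step strictly decreases $\nu_k$.

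The basic point is that multiplicity never increases: every blow-up occurring in the procedure has a smooth centre (a point or a smooth curve), and each first sequence of blow-ups (paragraph \ref{PRIMEIRA}) and each second sequence of blow-ups (paragraph \ref{SECONDPROCESS}) is a \emph{finite} composition of such blow-ups. Hence no point can pass from multiplicity $<m$ to multiplicity $\ge m$. Let $m$ be the largest multiplicity of a point at which the procedure acts. For $m'>m$, $C_k^{(m')}$ has no point where it is singular or non-transversal (such a point would have multiplicity $\ge m'>m$), so $C_k^{(m')}$ is a disjoint union of smooth curves transversal to $N_k$, disjoint from every modification centre, and $\nu_{k+1}^{(m')}\le\nu_k^{(m')}=0$. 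It therefore remains to prove $\nu_{k+1}^{(m)}<\nu_k^{(m)}$.

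Fix a point $\sigma\in C_k^{(m)}\cap N_k$ at which $C_k^{(m)}$ is singular or not transversal to $N_k$. If $S_k$ is non-degenerate at $\sigma$, the procedure blows up $\sigma$ and then applies the first sequence of blow-ups. The point blow-up is one step of the classical resolution of the germ of $C_k^{(m)}$ at $\sigma$, taken together with the divisor $N_k$: at each of the finitely many points over $\sigma$ at which $S$ still has multiplicity $m$ the strict transform of $C_k^{(m)}$ has a strictly smaller local contribution to the Enriques-type part of $\nu^{(m)}$, the exceptional divisor being met properly by that strict transform; one must also check that no new degenerate point of multiplicity $m$ is created over $\sigma$, so that the leading part of $\nu^{(m)}$ does not grow. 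The appended first sequence of blow-ups modifies $M_k$ only along the fibres of the plane projection over the curve $D_\ell$ attached to the discriminant and to $\mathrm{Sing}(N_k)$; that centre does not contain the components tracked by $\nu^{(m)}$, so by Lemma \ref{BLOWLIMIT} and Theorem \ref{SIGMAAB} it leaves $\nu^{(m)}$ unchanged --- its only role is to restore the standing hypothesis that the logarithmic singular locus avoids $\mathrm{Sing}(N)$. If instead $S_k$ is degenerate at $\sigma$, that is $C_\sigma(S_k)\supset C_\sigma(N_k)$, the procedure runs the second sequence of blow-ups at $\sigma$, which terminates by paragraph \ref{SECONDPROCESS}; the Hirzebruch--Jung type computation organised by the continued-fraction bookkeeping set up before Lemma \ref{CURVE} shows that the tangency $C_\sigma(S_k)\supset C_\sigma(N_k)$ is destroyed, so that the leading part of $\nu^{(m)}$ strictly drops over $\sigma$, while the concluding first-sequence blow-ups at the remaining singular normal-crossings points of $N$ leave the invariant non-increasing. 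In every case $\nu_{k+1}<\nu_k$.

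Since $\nu_k$ ranges over a set that, with the lexicographic order, is well ordered, and strictly decreases at every modification, the procedure reaches after finitely many steps a stage at which no modification is possible; by the definition of when it acts, this is exactly a stage at which every connected component of $\mathrm{Sing}^{N_k}(S_k)$ is a smooth curve transversal to $N_k$, and the procedure terminates. I expect the main obstacle to be the equimultiple step, and inside it the precise choice of the secondary invariant $\nu^{(m)}$: it must dominate simultaneously the classical resolution of the curve $C_k^{(m)}$ and the destruction of the degeneracy $C_\sigma(S_k)\supset C_\sigma(N_k)$, and be provably non-increasing under the auxiliary first and second sequences of blow-ups that are present only to keep $N$ a controlled normal-crossings divisor; this is where the explicit descriptions in paragraphs \ref{PRIMEIRA} and \ref{SECONDPROCESS}, together with Lemmas \ref{BLOWLIMIT} and \ref{CURVE} and Theorem \ref{SIGMAAB}, must be used.
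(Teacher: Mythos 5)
There is a genuine gap here, on two levels. First, the object your whole argument hangs on, the invariant $\nu_k^{(m)}$, is never actually defined: you offer "a reasonable candidate" and then concede in your last sentence that choosing it so that it provably drops is the main obstacle. The two decrease claims that would make the induction work --- that the non-degenerate step lowers the "Enriques-type" part without creating new degenerate points of multiplicity $m$, and that the second sequence of blow-ups destroys the degeneracy $C_\sigma(S_k)\supset C_\sigma(N_k)$ and so lowers the leading entry --- are asserted, not proved. Second, one of your supporting claims is false as stated: multiplicity of a hypersurface is \emph{not} preserved or decreased merely because the centre of the blow-up is smooth. If the centre is a smooth curve not contained in the surface, the multiplicity of the strict transform can go up (for $z^2-x^3$ blown up along $\{y=z=0\}$, the chart $z=y_1z_1$ gives $y_1^2z_1^2-x^3$, of multiplicity $3$ at the origin), and the centres $\rho_\ell^{-1}(D_\ell)$ of the first sequence of blow-ups (paragraph \ref{PRIMEIRA}) need not lie in $S_\ell$. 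So the stratification of $\mathrm{Sing}^{N_k}(S_k)$ by the generic multiplicity of $S_k$, on which your lexicographic order rests, is not under control.

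The paper's proof shows that all of this machinery is unnecessary, because the termination has nothing to do with the multiplicity of the surface: it is a statement about the curve $C=\mathrm{Sing}^{N}(S)$ alone, which transforms by strict transform at every step of paragraph \ref{MAINALGORITHM}. One attaches to each branch of $C$ the pair $(m_C,n_C)$, where $m_C$ is the multiplicity and $n_C$ the next element of the semigroup of the branch, together with, for each pair of components, the number of blow-ups needed to separate them. Every centre occurring in the procedure is either a point of $C$ or a smooth curve; blow-ups along smooth curves do not worsen these invariants, while a point blow-up at a bad point strictly improves them. The observation you missed, and which makes a single uniform argument possible, is that \emph{both} branches of the algorithm --- the non-degenerate case and the second sequence of blow-ups of paragraph \ref{SECONDPROCESS} --- begin by blowing up the bad point $\sigma$ itself, so the same strict decrease applies in either case; no separate mechanism such as "destroying the tangency $C_\sigma(S_k)\supset C_\sigma(N_k)$" is needed, and the auxiliary smooth-curve blow-ups only have to be checked not to hurt. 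With that in place the well-ordering argument closes, whereas in your version the key invariant and its decrease remain to be supplied.
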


\begin{proof}
Notice that, for each $\ell$,
Sing$^{N_{\ell+1}}(S_{\ell+1})$ is the strict transform by
$\pi_{\ell+1}$ of Sing$^{N_{\ell}}(S_{\ell})$.

Let $C$ be an irreducible singular curve of a germ of manifold 
$M$ of dimension $3$ at a point $o$.
Let $\gamma:(\mathbb C,0)\to C$ be the normalization of $C$.
Let $\Gamma$ be the semi group of the orders of the functions $\gamma^*f$,
$f\in \mathcal O_{M,o}$.
Let $m_C$ be the smallest positive integer that belongs to $\Gamma$.
The integer $m_C$ equals the multiplicity of $C$.
Let $n_C$ be the infimum of $\Gamma\setminus (m_C)$.

Let $\widetilde C$ be the strict transform of $C$ by
the blow up of $M$ along a smooth line that contains $o$.
Then
\[
m_{\widetilde C} < m_C 
\qquad
\hbox{or}
\qquad
m_{\widetilde C} = m_C 
\hbox{ and }
n_{\widetilde C}\le n_C.
\]
Hence the invariant does not get worse. Let $\widetilde C$ be the strict transform of $C$ by
the blow up of $M$ along $o$.
Then
\[
m_{\widetilde C} < m_C 
\qquad
\hbox{or}
\qquad
m_{\widetilde C} = m_C 
\hbox{ and }
n_{\widetilde C} < n_C.
\]
Hence the invariant improves. The facts above show that there is an integer $k$ such that
Sing$^{N_k}(S_k)$ is a union of smooth curves.
Hence there is an integer $\ell$ such that
Sing$^{N_\ell}(S_\ell)$ is a union of smooth curves transversal to $N_\ell$.

Let $C,C'$ be two curves of $M$.
Let $m(C,C')$ be the number of blow ups necessary to separate $C$ and $C'$.
Let $\widetilde C\; [\widetilde C']$ be the strict transform of $C[C']$ by
the blow up of $M$ along a smooth line that contains $o$.
Then
\[
m(\widetilde C, \widetilde C') \le  m( C,  C').
\]
Let $\widetilde C\; [\widetilde C']$ be the strict transform of $C[C']$ by
the blow up of $M$ along  $o$.
Then
\[
m(\widetilde C, \widetilde C') <  m( C,  C').
\]
Hence there is an integer $m$ such that each connected component of
Sing$^{N_m}(S_m)$ is a  smooth curve transversal to $N_m$.
\end{proof}

\begin{theorem}\label{DECIDE}
Let $M_0\leftarrow M_1 \leftarrow \cdots \leftarrow M_k$ be the sequence 
of morphisms described in paragraph \ref{MAINALGORITHM}. 
Then $\Sigma^N_o(S)=\Sigma^N_o$ if and only if one of the following statements holds.
\begin{enumerate}[$(a)$]
\item
somewhere along the process a curve that is not well behaved is produced,
\item there is 
$\sigma\in (S_k\cap N_k)\setminus$\em Sing\em$^{N_k}(S_k)$
such that $\sigma\in \Xi^N_\rho(S)$
for some projection $\rho$ compatible with $N$,
\item
there is 
$\sigma\in S_k\cap N_k\cap $\em Sing\em$^{N_k}(S_k)$
such that $\sigma\in \overline{\Xi^N_\rho(S)\setminus \textrm{Sing}^N(S)}$
for some projection $\rho$ compatible with $N$.
\end{enumerate}
\end{theorem}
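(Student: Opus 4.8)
The plan is to run the algorithm of paragraph \ref{MAINALGORITHM} and track the logarithmic limit of tangents through each blow-up, reducing everything to the local criteria already established. By Lemma \ref{DESING} the process terminates: after $k$ steps every connected component of $\textrm{Sing}^{N_k}(S_k)$ is a smooth curve transversal to $N_k$, and it does not meet the singular locus of $N_k$. The strategy is to show that $\Sigma^N_o(S)=\Sigma^N_o$ if and only if somewhere in the tower one of three ``bad'' phenomena occurs, each of which is detected by a result proved earlier: a non–well-behaved curve appears (detected by Theorem \ref{SECONDSEQ}(b) via Lemma \ref{WELLB}), or at the final stage there is a point $\sigma\in S_k\cap N_k$ where the (appropriate closure of the) apparent contour passes through $\sigma$ — detected by Theorem \ref{CHANGESMOOTH} when $\sigma\notin\textrm{Sing}^{N_k}(S_k)$, and by Theorem \ref{CHANGETRANS} when $\sigma\in\textrm{Sing}^{N_k}(S_k)$, using in the latter case that $\textrm{Sing}^{N_k}(S_k)$ is transversal to $N_k$ at $\sigma$.

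First I would set up the inductive bookkeeping. At each stage the map $\pi_{k+1}$ is either a point blow-up followed by the first sequence of blow-ups (in the non-degenerate case) or the second sequence of blow-ups (in the degenerate case), performed at each point of the finite set $\textrm{Sing}^{N_k}(S_k)\cap N_k$. For each such point I would apply, respectively, Theorems \ref{TRIVCONE} and \ref{SIGMAAB} (non-degenerate branch) or Theorem \ref{SECONDSEQ} (degenerate branch) to express $\Sigma^{N_k}_\sigma(S_k)=\Sigma^{N_k}_\sigma$ in terms of conditions on the next manifold $M_{k+1}$: the tangent cone being a union of planes, all relevant curves in the exceptional divisors being well behaved, and the analogous equality at regular points of $N_{k+1}$ for $S_{k+1}$. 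The key point is that in the non-degenerate branch, the condition ``$C_\sigma(S_k)$ is a union of planes'' is automatically forced once we know $\Sigma^{N_k}_\sigma(S_k)$ is proper, by Lemma \ref{NONTRIVCONE}, so the tangent-cone clause never needs to be listed separately in the statement; and in the degenerate branch clause $(a)$ of Theorem \ref{SECONDSEQ} is likewise absorbed into the non-well-behaved-curve clause by the same lemma applied inside the second sequence. Descending the tower, one obtains that $\Sigma^N_o(S)=\Sigma^N_o$ iff at \emph{some} stage a non-well-behaved curve is produced or, at the bottom stage $M_k$, there is a point of $S_k\cap N_k$ at which the logarithmic limit of tangents equals $\Sigma^{N_k}_\sigma$.

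At the bottom stage, every component of $\textrm{Sing}^{N_k}(S_k)$ is smooth and transversal to $N_k$ and disjoint from $N_k^\sigma$, so $N_k$ is smooth near each relevant $\sigma$. For $\sigma\in (S_k\cap N_k)\setminus\textrm{Sing}^{N_k}(S_k)$, $S_k$ is smooth at $\sigma$ off $N_k$, so Theorem \ref{CHANGESMOOTH} applies and gives $\Sigma^{N_k}_\sigma(S_k)=\Sigma^{N_k}_\sigma$ iff $\sigma\in\Xi^{N_k}_{\rho}(S_k)$ for a compatible projection $\rho$; pulling back along the tower translates this into $\sigma$ lying on $\Xi^N_\rho(S)$ for a compatible $\rho$ on $M_0$, which is clause $(b)$. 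For $\sigma\in S_k\cap N_k\cap\textrm{Sing}^{N_k}(S_k)$, the transversality hypothesis lets me invoke Theorem \ref{CHANGETRANS}, whose criterion is membership of $\sigma$ in $\overline{\Xi^{N_k}_\rho(S_k)\setminus\textrm{Sing}^{N_k}(S_k)}$ together with the multiplicity condition $m_{\sigma'}(S_k)=m_\sigma(S_k)$ nearby; the multiplicity condition is equivalent, by Lemma \ref{TRIVIALTRANSVERSAL}, to the other two and need not be carried along once we are at a stage where the singular curve is already smooth and transversal, giving clause $(c)$.

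The main obstacle is the descent argument: showing that the conditions ``$\Sigma^{N_{k+1}}_{\sigma'}(S_{k+1})=\Sigma^{N_{k+1}}_{\sigma'}$ for some regular point $\sigma'$ of $N_{k+1}$'' assemble correctly across the (possibly many) points blown up at each stage, and that the property of being ``well behaved'' is stable in the precise sense needed when a component of an exceptional divisor is itself modified at a later step. Concretely, one must check that a curve which is well behaved at the moment it is created cannot become non-well-behaved under subsequent blow-ups in the tower, and conversely that a non-well-behaved curve, once produced, continues to force $\Sigma^N_o(S)=\Sigma^N_o$ — i.e., that the phenomenon is detected regardless of how far down the process it occurs. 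This is where the compatibility of the first and second sequences of blow-ups with the logarithmic contact structure, as encoded in Lemmas \ref{BLOWLIMIT}, \ref{ADICAO} and \ref{WELLB} and in the maps $\upsilon_k$, does the real work; the remaining steps are the routine translation of apparent contours and discriminants up and down the tower via the universal property of blow-ups.
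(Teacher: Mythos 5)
Your plan follows essentially the same route as the paper's proof: both directions are handled by induction along the tower of blow-ups via Theorem \ref{SECONDSEQ} (supplemented by Theorems \ref{SIGMAAB} and \ref{TRIVCONE} at the non-degenerate steps), with Theorems \ref{CHANGESMOOTH} and \ref{CHANGETRANS} invoked at the terminal stage to produce clauses $(b)$ and $(c)$, and it is correct at the same level of detail as the paper's own argument. Your two elaborations --- naming the first-sequence results explicitly in the descent and remarking that the multiplicity condition of Theorem \ref{CHANGETRANS} can be dropped from clause $(c)$ via Lemma \ref{TRIVIALTRANSVERSAL} --- concern points the paper's proof passes over silently, not departures from its method.
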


\begin{proof}
Assume $(a)$ holds. Then there is an integer $\ell$ such that a 
non well behaved curve is produced along the second sequence of blow ups
$M_\ell\leftarrow M_{\ell+1}$.  
By Theorem \ref{SECONDSEQ} there is $\sigma_\ell\in S_\ell\cap N_\ell$ such that 
$\Sigma_{\sigma_\ell}^{N_\ell}(S_\ell)=\Sigma_{\sigma_\ell}^{N_\ell}$.
We prove by induction in $n$, using Theorem \ref{SECONDSEQ}, that for each $n\le\ell$
there is 
$\sigma_{\ell-n}\in S_{\ell-n}\cap N_{\ell-n}$ such that 
$\Sigma_{\sigma_{\ell-n}}^{N_{\ell-n}}(S_{\ell-n})=\Sigma_{\sigma_{\ell-n}}^{N_{\ell-n}}$.

Assume $(b)$ $[(c)]$ holds. By Theorem \ref{CHANGESMOOTH} [Theorem \ref{CHANGETRANS}] 
there is $\sigma_k\in S_k\cap N_k$ such that 
$\Sigma_{\sigma_k}^{N_k}(S_k)=\Sigma_{\sigma_k}^{N_k}$.
We repeat the argument of the previous paragraph.

Assume $(a),(b),(c)$ do not hold.
Since $(b),(c)$ do not hold,
 $\Sigma_{\sigma}^{N_k}(S_k)$ is finite for each $\sigma\in S_k\cap N_k$.
We can now show by induction in $\ell$, using Theorem \ref{SECONDSEQ} and the fact that $(a)$ does not hold,
that $\Sigma_{\sigma}^{N_{k-\ell}}(S_{k-\ell})$is finite for each $\sigma\in S_{k-\ell}\cap N_{k-\ell}$ and each $\ell\le k$.
\end{proof}

\begin{theorem}\label{JUNG}
Let $S$ be a surface of the germ of a complex manifold $M$ at a point $o$.
Assume $\Sigma_o(S)$ is finite.
Let us blow up $M$ at $o$.
Let us apply the procedure described in paragraph \ref{MAINALGORITHM} at each singular point $\sigma$ 
of the strict transform $S_0$ of $S$ that belongs to the exceptional divisor of the blow up.
We obtain is this way a manifold $M_n$, 
a normal crossings divisor $N_n$ and a surface $S_n$ such that at each point $\sigma$
of $S_n\cap N_n$, the germ of $S_n$ at $\sigma$ is a 
quasi ordinary singularity relative to a projection $\rho$ compatible with $N_n$. 
Moreover, $\Delta_\rho S_n\cup \rho(N_n)$ is a normal crossings divisor at $\rho(o)$.
\end{theorem}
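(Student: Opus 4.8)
The plan is to translate the statement into one about logarithmic limits of tangents, then to run the algorithm of paragraph \ref{MAINALGORITHM} and use Theorem \ref{DECIDE} to propagate finiteness, and finally to read off quasi-ordinarity from Theorems \ref{CHANGESMOOTH}, \ref{CHANGETRANS} and Lemma \ref{TRIVIAL}. Blowing up $M$ at $o$ produces $M_0$, the exceptional divisor $E=N_0$ and the strict transform $S_0$. Since $\Sigma_o(S)$ is finite it contains no projective line, so the ``moreover'' part of Theorem \ref{CLASSICAL} gives $\Sigma^{E}_{\sigma}(S_0)\ne\Sigma^{E}_{\sigma}$ for every $\sigma\in S_0\cap E$; as $E$ is smooth, $\Sigma^{E}_{\sigma}(S_0)$ is a closed analytic subset of the projective line $\mathbb P^*_{\sigma}E$ (Lemma \ref{RESIDUAL}), hence a finite set. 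Thus the triple $(M_0,N_0,S_0)$ satisfies the hypotheses of \ref{MAINALGORITHM} and, in addition, has a finite logarithmic limit of tangents at every point of $S_0\cap N_0$.

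Next I would run the procedure of \ref{MAINALGORITHM}; by Lemma \ref{DESING} it terminates after finitely many steps, producing $(M_n,N_n,S_n)$ for which each connected component of $\mathrm{Sing}^{N_n}(S_n)$ is a smooth curve transversal to $N_n$ and, by construction, disjoint from $\mathrm{Sing}(N_n)$. Applying the contrapositive of Theorem \ref{DECIDE} to $(M_0,N_0,S_0)$ at each $\sigma\in S_0\cap N_0$, the finiteness obtained above forces conditions $(a),(b),(c)$ of that theorem to fail: no curve which is not well behaved is ever produced, and for every $\sigma\in S_n\cap N_n$ and every projection $\rho$ compatible with $N_n$ one has $\sigma\notin\Xi^{N_0}_{\rho}(S_0)$ when $\sigma\notin\mathrm{Sing}^{N_n}(S_n)$, respectively $\sigma\notin\overline{\Xi^{N_0}_{\rho}(S_0)\setminus\mathrm{Sing}^{N_0}(S_0)}$ when $\sigma\in\mathrm{Sing}^{N_n}(S_n)$; since apparent contours commute with the blow-up morphisms, the same statements hold with $(M_0,N_0,S_0)$ replaced by $(M_n,N_n,S_n)$ and $\rho$ by the induced projection.

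It then remains to read off the local structure at a point $\sigma\in S_n\cap N_n$. If $\sigma$ is a smooth point of $S_n$ there is nothing to prove. If $\sigma$ is singular and $\sigma\notin\mathrm{Sing}^{N_n}(S_n)$, then $\mathrm{Sing}(S_n)\subset N_n$ in a neighbourhood of $\sigma$; choosing $\rho$ compatible with $N_n$ and using $\sigma\notin\Xi^{N_n}_{\rho}(S_n)$ together with the computations in the proofs of Theorem \ref{CHANGESMOOTH} and Lemma \ref{TRIVIAL} (whose second case covers $N_n=\{xy=0\}$), one obtains a fractional power series expansion of $S_n$ at $\sigma$ with $\Delta_{\rho}(S_n)\subset\rho(N_n)$, so $S_n$ is quasi-ordinary there. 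If $\sigma\in\mathrm{Sing}^{N_n}(S_n)$, then $N_n$ is smooth at $\sigma$ and the singular curve is transversal to it; combining $\sigma\notin\overline{\Xi^{N_n}_{\rho}(S_n)\setminus\mathrm{Sing}^{N_n}(S_n)}$ with the multiplicity condition (which follows from finiteness of $\Sigma^{N_n}_{\sigma}(S_n)$ by Lemma \ref{TRIVIALTRANSVERSAL}) and with the computation in the proof of Theorem \ref{CHANGETRANS}, one gets an expansion $z=x^{\mu}y^{\nu}\varphi(x^{1/d},y^{1/d})$ with $\varphi(0,0)\ne0$ and $\nu\ge1$, so $S_n$ is quasi-ordinary at $\sigma$ with $\Delta_{\rho}(S_n)\subset\{xy=0\}$. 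In every case $\Delta_{\rho}(S_n)\cup\rho(N_n)$ is contained in a coordinate normal crossings divisor, which gives the last assertion.

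The step I expect to be the main obstacle is the second paragraph: making the bookkeeping behind Theorem \ref{DECIDE} rigorous in the present situation, i.e.\ controlling the logarithmic limit of tangents at the normal crossings points of $N$ created during the first and second sequences of blow-ups (this is the role of Theorems \ref{SIGMAAB}, \ref{TRIVCONE} and \ref{SECONDSEQ}) and verifying that \emph{finiteness}, and not merely the absence of a projective line, is preserved at every step. The same difficulty reappears at a crossing point $\sigma$ of $N_n$, where one must deduce from finiteness of $\Sigma^{N_n}_{\sigma}(S_n)$ that $\Delta_{\rho}(S_n)\subset\rho(N_n)$; this is precisely where the analysis of section \ref{FSB} is needed.
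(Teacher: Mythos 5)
Your proposal is correct and follows essentially the same route as the paper's own proof: Lemma \ref{DESING} for termination of the procedure of \ref{MAINALGORITHM}, the contrapositive of Theorem \ref{DECIDE} (fed by the ``moreover'' part of Theorem \ref{CLASSICAL} at the initial blow-up, which the paper leaves implicit) to exclude non-well-behaved curves and to obtain the contour and finiteness conditions at the final stage, and then Theorems \ref{CHANGESMOOTH} and \ref{CHANGETRANS} together with Lemmas \ref{TRIVIAL} and \ref{TRIVIALTRANSVERSAL} to read off quasi-ordinarity and the containment of the discriminant in $\rho(N_n)$ at the three kinds of points, the only cosmetic difference being that the paper treats the crossing points of $N_n$ via the first sequence of blow-ups (paragraph \ref{PRIMEIRA}) rather than via the failure of condition $(b)$. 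Note only that conditions $(b)$ and $(c)$ of Theorem \ref{DECIDE} are meant to be read directly in terms of $\Xi^{N_k}_{\rho}(S_k)$ at the last stage (as the proof of that theorem, which applies Theorems \ref{CHANGESMOOTH} and \ref{CHANGETRANS} to $S_k$, makes clear), so your auxiliary claim that apparent contours commute with the intermediate blow-ups is unnecessary and should be dropped rather than justified.
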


\begin{proof}
By Lemma \ref{DESING}, Sing$^{N_k}(S_k)$ is smooth and transversal to $N_k$ at smooth points of $N_k$.

Let $\sigma\in S_k\cap N^{\sigma}_k$. The procedure of paragraph \ref{MAINALGORITHM} relies on the procedure of paragraph \ref{PRIMEIRA}. Therefore $\Delta_{\rho_k}(S_k)\subset \rho_k(N_k)$ for each projection $\rho_k$ compatible with the germ of $N_k$ at $\sigma$. Therefore $S_k$ is quasi ordinary at the singular points of $N_k$.

By Theorem  \ref{DECIDE}, $\Sigma_\sigma^{N_k}(S_k)$ is finite for each $\sigma\in S_k\cap N_k$.
By Theorem \ref{CHANGESMOOTH}, $S_k$ is quasi ordinary at the regular points of $N_k$ that do not belong to Sing$^{N_k}(S_k)$.
By Theorem \ref{CHANGETRANS}, $S_k$ is quasi ordinary at the points of Sing$^{N_k}(S_k)\cap N_k$.
\end{proof}

Let $M$ be an affine chart with coordinates $(x,y,z)$.
Let $\rho:M\to X$ be the linear projection $(x,y,z)\mapsto (x,y)$.
Set $o=(x,y,z)$, $\sigma=(0,0)$. Let $S$ be a surface of $M$.
Let $\pi_1:M_1\to M$ be the blow up of $M$ at $o$.
Set $E_1=N_1=\pi_1^{-1}(o)$.
Let $S_1$ be the strict transform of $S$ by $\pi_1$.

\begin{example}\em
Let $S$ be the surface defined by the polynomial
\[
z^2-x^2(x+y^2).
\]
Notice that $C_o(S)=\{z=0\}$ and
\[
C_\sigma(\Delta_z(S))=
C_\sigma(\rho(\hbox{Sing}(S)))=\{x=0\}.
\]
By Theorem 1.4.4.1 of \cite{AMERICAN}, $(0:0:1)\in\Sigma_o(S)\subset \Sigma_{\ell}$,
where 
\[
\ell=\{x=z=0\}
\qquad
\hbox{and}
\qquad 
\Sigma_{\ell}=\{\eta=0\}.
\]

Set $x=x_1y_1,y=y_1,z=y_1z_1$. 
Then $o_{\ell}$ is the origin of the chart $W_1$ with coordinates $(x_1,y_1,z_1)$.
Moreover, $N_1\cap W_1=\{y_1=0\}$ and $S_1\cap W_1$ is defined by the polynomial
\[
z_1^2-x_1^2y_1(x_1+y_1).
\]
Since $C_{o_\ell}(S_1)=\{z_1=0\}$, $S_1$ is non degenerated at $o_\ell$.

Let $\pi_2:M_2\to M_1$ be the blow up of $M_1$ at $o_\ell$.

Set $x_1=x_2y_2,y_1=y_2,z_1=y_2z_2$.
If $W_2$ is the affine chart with coordinates $(x_2,y_2,z_2)$,
$N_2\cap W_2=\{y_2=0\}$ and $S_2\cap W_2$ is defined by the polynomial
\[
z_2^2-x_2^2y_2^2(x_2+1).
\]
Since $\Delta_{z_2}(S_2)=\{x_2y_2(x_2+1)  \}$,
$\Sigma_{(-1,0,0)}^{N_2}(S_2)=\{\eta=0\}$.
Hence $\Sigma_o(S) = \Sigma_{\ell}$.
\end{example}

\begin{example}\em
Let $S$ be the swallowtail surface, defined by the polynomial
\[
256z^3-27y^4-128x^2z^2+144xy^2z+16x^4z-4x^3y^2.
\]
Notice that $C_o(S)=\{z=0\}$ and
\[
C_\sigma(\Delta_z(S))=
C_\sigma(\rho(\hbox{Sing}(S)))=\{y=0\}.
\]
By Theorem 1.4.4.1 of \cite{AMERICAN}, $(0:0:1)\in\Sigma_o(S)\subset \Sigma_{\ell}$,
where 
\[
\Sigma_{\ell}=\{\xi=0\}.
\]

Set $x=x_1,y=x_1y_1,z=x_1z_1$. 
Then $o_{\ell}$ is the origin of the chart $W_1$ with coordinates $(x_1,y_1,z_1)$.
Moreover, $N_1\cap W_1=\{x_1=0\}$ and $S_1\cap W_1$ is defined by the polynomial
\[
256z_1^3-27x_1y_1^4-128x_1z_1^2+144x_1y_1^2z_1+16x_1^2z_1-4x_1^2y_1^2.
\]
Since $C_{o_{\ell}}(S_1)=\{ z=0 \}$, $S_1$ is non degenerated at $o_{\ell}$.

Let $\pi_2:M_2\to M_1$ be the blow up of $M_1$ at $o_{\ell}$.

Set $x_1=x_2,y_1=x_2y_2,z_1=x_2z_2$.
If $W_2$ is the chart with coordinates $(x_2,y_2,z_2)$, $N_2\cap W_2=\{x_2=0\}$
and $S_2\cap W_2$ is defined by the polynomial
\[
256z_2^3-27x_2^2y_2^4-128z_2^2+144x_2y_2^2z_2+16z_2-4x_2y_2^2.
\]
Since $S_2\cap N_2\cap W_2=\{x_2=z_2(4z_2-1)=0\}$ and $\Delta_{z_2}(S_2)=\{ x_2y_2(27x_2y_2^2+8)=0 \}$, 
we need to analyse the points $o_1=(0,0,0)$ and $o_2=(0,0,1/4)$.
Since Sing$^{N_2}(S_2)$ equals $\{y_2=4z_2-1=0\}$ in a neighbourhood of $N_2$, 
it follows from Theorem \ref{CHANGESMOOTH} that $\Sigma_{o_1}^{N_2}(S_2)$ is finite.
Since $m_{(a,0,1/4)}(S_2)$ does not depend on $a$, for $|a|<<1$, 
it follows from Theorem \ref{CHANGETRANS} that $\Sigma_{o_2}^{N_2}(S_2)$ is finite.

Set $x_1=x_3y_3,y_1=y_3,z_1=x_3z_3$.
If $W_3$ is the chart with coordinates $(x_3,y_3,z_3)$, $N_2\cap W_3=\{x_3y_3=0\}$ and $S_2\cap W_3$ is  defined by the polynomial
\[
256z_3^3-27x_3y_3^2-128x_3z_3^2+144x_3y_3z_3+16x_3^2z_3-4x_3^2y_3.
\]
Set $o_3=(0,0,0)$. Since the intersection of $S_2$ with the singular locus $N^\sigma_2$ of $N_2$ equals
$\{o_3\}$, we need to compute $\Sigma^{N_2}_{o_3}(S_2)$.

Let $\pi_3:M_3\to M_2$ be the blow up of $M_2$ along $N^\sigma_2$.

Set $x_3=x_4y_4,y_3=y_4,z_3=z_4$.
If $W_4$ is the chart with coordinates $(x_4,y_4,z_4)$, 
$E_3\cap W_4=\{y_4=0\}$, $N_3\cap W_4=\{x_4y_4=0\}$ and
$S_3\cap W_4$ is defined by the polynomial
\[
256z_4^3-27x_4y_4^3-128x_4y_4z_4^2+144x_4y_4^2z_4+16x_4^2y_4^2z_4-4x_4^2y_4^3.
\]
Since $S_3\cap E_3\cap W_4=\{z_4=0\}$ and $\Delta_{z_4}(S_3)=\{x_4y_4(8x_4-27)=0\}$,
we need to analyse the points $o_4=(0,0,0)$ and $o_5=(-27/8,0,0)$.
By Lemma \ref{TRIVIAL}, $\Sigma_{o_4}^{N_3}(S_3)$ is finite.

Set $x_5=x_4-27/8$, $y_5=y_4$, $z_5=z_4+9y_4/32+x_4y_4/36$. 
There are integers $a_1,...,a_5$ such that $S_3$ is defined near $o_5$ by the polynomial
\[
a_1z_5^3+a_2 y_5 z_5^2+a_3x_5^3y_5^3+a_4x_5^2y_5^2z_5+a_5x_5y_5z_5^2.
\]
Now Sing$^{N_3}(S_3)=\{y_5=z_5=0\}$ and it is easy to check that 
$m_\sigma(S_3)$ does not depend on $\sigma\in$Sing$^{N_3}(S_3)$.
By Theorem \ref{CHANGETRANS}, $\Sigma_{o_5}^{N_3}(S_3)$ is finite. 
By Theorem \ref{DECIDE}, $\Sigma_{o}^{N}(S)=\{(0:0:1)\}$.
\end{example}

\begin{example}\em
Set $N=\{x=0\}$, $o=(0,0,0)$ and $\sigma=(0,0)$.
Let $S$ be the surface defined by the polynomial
\[
z^5-x^2y.
\]
The surface $S$ is degenerated at $o$. 
We will perform the second sequence of blow ups at $o$.
Let $\pi_1:M_1\to M$ be the blow up of $M$ at $o$.

Set $x=x_1y_1,y=y_1,z=y_1z_1$. 
If $W_1$ is the affine open set of $M_1$ with coordinates $(x_1,y_1,z_1)$,
 $S_1\cap W_1$ is defined by the polynomial
\[
x_1^2-y_1^2z_1^5
\]
and $S_1\cap E_1 \cap W_1=N_1^\sigma\cap W_1=\{x_1=y_1=0\}$.

Set $x=x_2z_2,y=y_2z_2,z=z_2$. 
If $W_2$ is the affine open set of $M_1$ with coordinates $(x_2,y_2,z_2)$,
 $S_1\cap W_2$ is defined by the polynomial
\[
z_2^2-x_2^2y_2,
\]
$S_1\cap E_1 \cap W_2=\{z_2=x_2y_2=0\}$ and $N_1^\sigma\cap W_2=\{x_2=z_2=0\}$.

The surface $S_1$ is smooth in the other chart.

Let $\pi_2:M_2\to M_1$ be the blow up of $M_1$ along $N_1^\sigma$.
Set $E_2=\pi_2^{-1}(N_1^\sigma)$.
Notice that $\pi=\pi_1\circ\pi_2:M_2\to M_1$ is the second sequence of blow ups of $S$.
Since $S_2$ is quasi ordinary at each point it is quite easy to verify that 
$\Sigma_\sigma^{N_2}(S_2)$ is finite for each $\sigma\in S_2$.

It follows from Theorem \ref{DECIDE} that $\Sigma_o(S)=\Sigma_o^N$ because the curve
$S_2\cap E_2$ has a singularity, hence one of its irreducible components is not well behaved.
\end{example}

\noindent
This work was partially supported by the Funda\c{c}\~{a}o para a Ci\^{e}ncia e a Tecnologia (Portuguese Foundation for Science and Technology) through the projects UID/MAT/04561/2013​ (CMAF-CIU) and UID/MAT/00297/2013 (Centro de Matem\'atica e Aplica\c{c}\~{o}es).

\end{document}